\newtheorem{theorem}{Theorem}[section]
\newtheorem{lemma}[theorem]{Lemma}
\newtheorem{proposition}[theorem]{Proposition}
\newtheorem{conjecture}[theorem]{Conjecture}
\newtheorem{problem}[theorem]{Problem}
\theoremstyle{definition}
\newtheorem{examples}[theorem]{Examples}
\newtheorem{definition}[theorem]{Definition}
\newcommand{\N}{\mathbb N}
\newcommand{\Z}{\mathbb Z}
\newcommand{\R}{\mathbb R}
\newcommand{\Q}{\mathbb Q}
\newcommand{\red}{{\text{\rm red}}}
\newcommand{\BF}{\text{\rm BF}}
 \DeclareMathOperator{\ord}{ord}
\DeclareMathOperator{\spec}{spec} \DeclareMathOperator{\supp}{supp}
\DeclareMathOperator{\Pic}{Pic} 
\DeclareMathOperator{\End}{End} \DeclareMathOperator{\Min}{Min}
\DeclareMathOperator{\Int}{Int}
\newcommand{\DP}{\negthinspace : \negthinspace}
\renewcommand{\t}{\, | \,}
\numberwithin{equation}{section}
\begin{document}

\title{Sets of Lengths}

\address{University of Graz, NAWI Graz \\ Institute for Mathematics and Scientific Computing \\ Heinrichstra{\ss}e 36\\ 8010 Graz, Austria}

\email{alfred.geroldinger@uni-graz.at}

\author{Alfred Geroldinger}

\thanks{
I would like to thank Daniel Smertnig, Salvatore Tringali, Qinghai Zhong, and all the anonymous referees for their careful reading. Their comments helped me to eliminate a couple of flaws, to improve the presentation of the paper, and to increase its readability. Furthermore, I would like to thank the  Scott Chapman for his encouragement to write this survey and all his patience, and Moshe Roitman for showing to me that transfer Krull monoids need not be Mori. \newline This work was supported by the Austrian Science Fund FWF, Project Number P 28864-N35.}

\keywords{Krull monoids,   maximal orders, zero-sum sequences;  sets of lengths, unions of sets of lengths, sets of distances, elasticities}

\subjclass[2010]{11B30, 11R27, 13A05, 13F05, 16H10, 16U30, 20M13}

\begin{abstract}
Oftentimes the elements of a ring or semigroup $H$ can be written as finite products of irreducible elements, say $a=u_1 \cdot \ldots \cdot u_k = v_1 \cdot \ldots \cdot v_{\ell}$, where the number of irreducible factors is distinct. The set $\mathsf L (a) \subset \N$ of all possible factorization lengths of $a$ is called the set of lengths of $a$, and the full system $\mathcal L (H) = \{ \mathsf L (a) \mid a \in H \}$ is a well-studied means of describing the non-uniqueness of factorizations of $H$. We provide a friendly introduction, which is largely self-contained, to what is known about systems of sets of lengths for rings of integers of algebraic number fields and  for transfer Krull monoids of finite type as their generalization.
\end{abstract}

\maketitle

\section{Introduction} \label{1}

We all know that every positive integer can be written as a finite product of irreducibles (primes) and that such a factorization is unique up to the order of appearance.
 Similar to  factorizations in the positive integers,
in many rings and semigroups, elements can also be written as finite products  of irreducible elements, but unlike the case of the integers, such factorizations need not always be unique.  It is the main objective of factorization theory to describe the various aspects of non-uniqueness and to classify them in terms of  invariants of the underlying algebraic structure. Before it was extended to commutative ring and semigroup theory, factorization theory had its origin in algebraic number theory,   and only in recent years has been extended to  non-commutative settings \cite{Ba-Sm15, Sm16a}. For further background, we refer the reader to several monographs and conference proceedings  \cite{An97, Ge-HK06a, Ge-Ru09, Fo-Ho-Lu13a, C-F-G-O16}. It is no surprise that this development has been chronicled over  the years by a series of Monthly articles (from \cite{Ja65} to \cite{Ba-Ch11a, Ba-Wi13a}). While the focus  of this interest has been on commutative  domains and their semigroups of ideals, such studies range from abstract semigroup theory to  the factorization theory of motion polynomials  with application in mechanism science \cite{H-L-S-S15}.

Sets of lengths are the most investigated invariants in factorization theory. To fix notation, if an element $a$ in a semigroup  can be written as a product of irreducible elements, say $a=u_1 \cdot \ldots \cdot u_k$, then $k$ is called the length of the factorization, and the set $\mathsf L (a) \subset \N$ of all possible factorization lengths of $a$ is called the set of lengths of $a$.  Under a mild condition on the semigroup, sets of lengths are finite nonempty subsets of $\N_0$, and if there is an element $a$ in the semigroup with $|\mathsf L (a)|>1$ (meaning that $a$ has factorizations of distinct lengths), then sets of lengths can get arbitrarily long (a precise statement of this is in Lemmas \ref{2.1} and \ref{2.2}).

The goal of this paper is to give a friendly introduction to factorization theory. Indeed, we take the reader on a tour through sets of lengths which is highlighted by  two structure theorems (Theorems \ref{2.6} and \ref{5.3}), two open problems (Problem \ref{4.7} and the Characterization Problem at the beginning of Section \ref{6}), and a conjecture (Conjecture \ref{6.7}). In Section \ref{2} we introduce sets of distances and unions of sets of lengths. We provide a full and self-contained proof for the Structure Theorem for Unions of Sets of Lengths (Theorem \ref{2.6}), and outline an argument that finitely generated commutative monoids satisfy all assumptions of that Structure Theorem. In Section \ref{3} we discuss commutative Krull monoids. This class includes Dedekind domains and hence rings of integers of algebraic number fields, and we will provide an extended list of examples  stemming from a variety of mathematical areas. The central strategy for studying sets of lengths in a given class of semigroups is to construct homomorphisms (called transfer homomorphisms) which can be used to transfer analogous results in a simpler class of semigroups directly back to the more complex class. In Section \ref{4} we discuss transfer homomorphisms, show that they preserve sets of lengths, and provide a self-contained proof for the  fact that there is a transfer homomorphism from commutative Krull monoids to monoids of zero-sum sequences (which are  monoids having a combinatorial flavor that  will often be the simpler class of semigroups to which the more complex semigroup is reduced). We provide an extended list of  transfer Krull monoids (these are the monoids allowing a transfer homomorphism to a monoid of zero-sum sequences), and then we restrict our discussion to this class of monoids. In Section \ref{5} we discuss the Structure Theorem for Sets of Lengths and provide examples showing that all aspects addressed in the Structure Theorem occur naturally. In  Section \ref{6} we discuss sets of lengths of the monoid of zero-sum sequences over a finite abelian group  (the transfer machinery of Section \ref{4} guarantees that these sets of lengths coincide with the sets of lengths of a ring of integers). They can be studied with methods from Additive Combinatorics and their structure is by far the best understood (among all  classes of monoids). In this setting, unions of sets of lengths and the set of distances are intervals and they have natural upper bounds (Proposition \ref{6.1}). In spite of the fact that almost all (in a certain sense) sets of lengths are intervals (Theorem \ref{6.3}) we  conjecture  that the system of sets of lengths is a characteristic for the group  (Conjecture \ref{6.7}). In order to keep this article as self-contained as possible, we do not mention
 arithmetical concepts beyond sets of lengths (such as catenary and tame degrees), or factorization theory in rings with zero-divisors, or divisibility theory in non-atomic rings.

\section{Basic Notation and Unions of Sets of Lengths} \label{2}

We denote by $\mathbb N$ the set of positive integers and set $\N_0 = \N \cup \{0\}$. For integers $a, b \in \Z$, we denote by $[a, b] = \{ x \in \Z \mid a \le x \le b \}$ the (discrete) interval between $a$ and $b$, and by an interval we always mean a  set of this form.
Let $L, L' \subset \Z$ be subsets of the integers. Then $L +L' = \{ a+b \mid a \in L, b \in L' \}$ is the {\it sumset} of $L$ and $L'$. Thus we have $L + \emptyset = \emptyset$, and we set  $-L = \{ -a \mid a \in L\}$. For an integer $m \in \Z$, $m+L = \{m\}+L$ is the shift of $L$ by $m$. For $k \in \N$, we denote by $k L = L + \ldots + L$ the {\it $k$-fold sumset} of $L$ and by $k \cdot L = \{ k a \mid a \in L \}$ the {\it dilation} of $L$ by $k$.
A positive integer $d \in \N$ is called a {\it distance} of $L$ if there are $k, \ell \in L$ with $\ell - k = d$ and the interval $[k, \ell]$ contains no further elements of $L$.   We denote by  $\Delta (L) \subset \N$ the {\it set of distances} of $L$. By definition, we have $\Delta (L) = \emptyset$ if and only if $|L| \le 1$, and $L$ is an arithmetical progression if and only if $|\Delta (L)| \le 1$ . For $L \subset \N$, we denote by $\rho (L) = \sup L/ \min L \in \Q_{\ge 1} \cup \{\infty\}$ the {\it elasticity} of $L$ and we set $\rho ( \{0\}) = 1$.

By a {\it semigroup}, we always mean an associative semigroup, and if not stated otherwise, we use multiplicative notation. Let $S$ be a semigroup. We say that $S$ is cancelative if for all elements $a, b, c \in S$, the equation $ab = ac$ implies $b=c$ and the equation $ba = ca$ implies $b=c$.
All rings and semigroups are supposed to have an identity, and all ring and semigroup homomorphisms preserve the identity.
By a {\it monoid}, we mean a cancelative semigroup. Clearly, subsemigroups of groups are monoids, and finite monoids are groups. If $aS \cap bS \ne \emptyset$ and $Sa \cap Sb \ne \emptyset$ for all $a, b \in S$, then $S$ has  a (unique left and right) quotient group which will be denoted by $\mathsf q (S)$.  If $R$ is a ring, then the set of cancelative elements $R^{\bullet}$ is a monoid. A {\it domain} $D$  is a ring in which zero is the only zero-divisor (i.e., $D^{\bullet} = D\setminus \{0\}$).
We use the abbreviation ACC for the ascending chain condition on ideals.

\medskip
 Let $P$ be a set. We denote by $\mathcal F^* (P)$ the {\it free monoid} with basis $P$, the elements of which may be viewed as words on the alphabet $P$. We denote by $|\cdot| \colon \mathcal F^* (P) \to \N_0$ the function which maps each word onto its length.
The {\it free abelian monoid} with basis $P$ will be denoted by $\mathcal F (P)$. Every $a \in \mathcal F (P)$ has a unique representation of the form
\[
a = \prod_{p \in P}p^{\nu_p} \,, \quad \text{where} \ \nu_p \in \N_0 \ \text{and} \ \nu_p=0 \ \text{for almost all} \ p \in P \,.
\]
Therefore, for every $p \in P$, there is a homomorphism (called the {\it $p$-adic exponent}) $\mathsf v_p \colon \mathcal F (P) \to \N_0$ defined by $\mathsf v_p (a) = \nu_p$. Similar to the case of free monoids, we denote by $|\cdot| \colon \mathcal F (P) \to \N_0$ the usual length function, and we observe that $|a|=\sum_{p \in P} \mathsf v_p (a)$ for all $a \in \mathcal F (P)$.

Let $H$ be a monoid and let $a, b \in H$. The element $a$ is said to be {\it invertible} if there exists an element $a' \in H$ such that $a a' = a' a = 1$. The set of invertible elements of $H$ will be denoted by $H^{\times}$, and we say that $H$ is reduced if $H^{\times} = \{1\}$.

The element $a \in H$ is called {\it irreducible} (or an {\it atom}) if $a \notin H^{\times}$ and, for all $u, v \in H$, $a = u v$ implies that $u \in H^{\times}$ or $v \in H^{\times}$. The monoid $H$ is said to be {\it atomic} if every $a \in H \setminus H^{\times}$ is a product of finitely many atoms of $H$.
While most integral domains introduced in elementary courses are atomic, not all such algebraic objects are.  An elementary example of a non-atomic monoid can be found in \cite[p. 166]{Ch14a}.
If $a \in H$ and $a = u_1 \cdot \ldots \cdot
u_k$, where $k \in \mathbb N$ and $u_1, \ldots, u_k \in \mathcal A
(H)$, then we say that $k$ is the {\it length} of the factorization.
For $a \in H \setminus H^{\times}$, we call
\[
\mathsf L_H (a) = \mathsf L (a) = \{ k \in \mathbb N \mid a \
\text{has a factorization of length} \ k \} \subset \N
\]
the {\it set of lengths} of $a$. For convenience, we set $\mathsf L
(a) = \{0\}$ for all $a \in H^{\times}$. By definition, $H$ is
atomic if and only if $\mathsf L (a) \ne \emptyset$ for all $a \in
H$. Furthermore, it is clear that the following conditions are equivalent: (1) $\mathsf L (a) = \{1\}$; (2) $a \in \mathcal A (H)$; (3) $1 \in \mathsf L (a)$. If $a, b \in H$, then $\mathsf L (a) + \mathsf L (b) \subset \mathsf L (a b)$.
If $H$ is commutative, then $H_{\red} = H/H^{\times} = \{ aH^{\times} \mid a \in H \}$ is the associated reduced monoid, and $H$ is called {\it factorial} if $H_{\red}$ is free abelian.
We say that $H$ is a \BF-{\it monoid} (or a bounded
factorization monoid) if $\mathsf L (a)$ is finite and nonempty for
all $a \in H$. We call
\[
\mathcal L (H) = \{ \mathsf L (a) \mid a \in H \}
\]
the {\it system of sets of lengths} of $H$. So if $H$ is a
\BF-monoid, then $\mathcal L (H)$ is a set of finite nonempty
subsets of the non-negative integers. We say that $H$ is {\it half-factorial} if $|L|=1$ for every $L \in \mathcal L (H)$. Clearly, factorial monoids are half-factorial, and in 1960 Carlitz showed that the ring of integers of an algebraic number field is half-factorial if and only if the class group has at most two elements (see Propositions \ref{3.3} and \ref{4.3}). Since then,
half-factoriality has been a central topic in factorization theory (see, for example,  \cite{Co-Sm11a, Ge-Ka-Re15a, Ma-Ok16a}).
We focus in this paper on the structure of sets of lengths in non-half-factorial BF-monoids. We start with two simple lemmas.

\begin{lemma} \label{2.1}
Let $H$ be a monoid.
\begin{enumerate}
\item If $H$ satisfies the {\rm ACC} on principal left ideals and the {\rm ACC} on principal right ideals, then $H$ is atomic.

\item Suppose that $H$ is atomic. Then $H$ is either half-factorial or for every $N \in \N$ there is an element $a_N \in H$ such that $|\mathsf L (a_N)| > N$.
\end{enumerate}
\end{lemma}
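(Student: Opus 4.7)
The plan for (1) is to use the two ACC hypotheses asymmetrically: one to extract an atom on the left, the other to force termination. Given $a \in H \setminus H^\times$, I would consider the family
\[
\mathcal D = \bigl\{\, bH \,:\, b \in H \setminus H^\times \text{ and } a \in bH \,\bigr\},
\]
which contains $aH$ and is therefore non-empty. Since ACC on principal right ideals is equivalent to the maximality condition, $\mathcal D$ has a maximal element $b_1 H$. The key claim is that $b_1$ must already be an atom: otherwise $b_1 = b' c'$ with $b', c' \notin H^\times$, so $b_1 H \subseteq b' H$, and the inclusion is strict, because $b' \in b_1 H$ would give $b' = b_1 h = b' c' h$, and left cancelation would force $c' h = 1$, hence $c' \in H^\times$ by a second cancelation. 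Since $b'$ is a non-unit left-dividing $a$, we would have $b' H \in \mathcal D$, contradicting maximality.

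Writing $a = b_1 a_1$ with $b_1$ an atom, I iterate on $a_1$: either $a_1 \in H^\times$ (so $a$ is an associate of $b_1$, hence itself an atom) or I can again extract an atom $b_2$ with $a_1 = b_2 a_2$, producing eventually $a = b_1 b_2 \cdots b_n a_n$ with each $b_i$ an atom. The mirror-image cancelativity argument now applies to the other side: from $a_n = b_{n+1} a_{n+1}$, the assumption $a_{n+1} \in H a_n$ leads to $1 = h b_{n+1}$ by right cancelation, forcing $b_{n+1} \in H^\times$. So as long as no $a_n$ is a unit, one obtains a strictly ascending chain $H a_1 \subsetneq H a_2 \subsetneq \cdots$, contradicting ACC on principal left ideals. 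Hence some $a_N \in H^\times$, and then $b_N a_N$ is an atom and $a = b_1 \cdots b_{N-1}(b_N a_N)$ is an atomic factorization.

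For (2), suppose $H$ is atomic but not half-factorial, and fix $a \in H$ with two atomic factorizations $a = u_1 \cdots u_k = v_1 \cdots v_\ell$ where $k < \ell$. For each $N \in \N$ and each $i \in \{0, 1, \ldots, N\}$, the element $a^N$ admits a factorization of length $ik + (N-i)\ell$, obtained by using the $u$-factorization in $i$ of the $N$ copies of $a$ and the $v$-factorization in the remaining $N - i$. Since $k \ne \ell$, these $N + 1$ lengths are distinct, so $|\mathsf L(a^N)| \ge N + 1 > N$, and $a_N := a^N$ does the job.

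The main obstacle is really part (1): in a non-commutative monoid, principal left and right ideals are genuinely different objects, and one must arrange the argument so that the maximal-element step (using one ACC) and the termination step (using the other) exploit cancelativity on opposite sides. Part (2) is essentially free once one notices that in a power of $a$ the choice of $u$- versus $v$-factorization can be made independently at each copy.
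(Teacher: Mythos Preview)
Your argument for part~(2) is essentially identical to the paper's: both take an element $a$ with $k,\ell\in\mathsf L(a)$, $k<\ell$, and observe that $a^N$ has factorizations of all lengths $ik+(N-i)\ell$, giving $|\mathsf L(a^N)|\ge N+1$.

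For part~(1), however, your route is genuinely different and in fact more complete. The paper only treats the normalizing case $aH=Ha$ for all $a$, referring to \cite{Sm13a} for the general non-commutative statement; in that simplified setting it argues globally, by contradiction on the set $\Omega$ of non-units admitting no atomic factorization, building an infinite ascending chain of principal ideals inside $\Omega$. You instead give a direct, constructive argument that works in the fully non-commutative setting: the ACC on principal \emph{right} ideals (via the maximal-element condition on your family $\mathcal D$) lets you peel off an atom $b_1$ on the left of $a$, and then the ACC on principal \emph{left} ideals forces the resulting chain $Ha_1\subsetneq Ha_2\subsetneq\cdots$ of cofactors to terminate. The two ACCs are thus used asymmetrically, exactly as you describe, and the cancelativity of $H$ is what makes one-sided inverses two-sided (so that $c'h=1$ really forces $c'\in H^\times$). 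Your approach buys a self-contained proof of the general case; the paper's approach is shorter but leans on an outside reference.
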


\begin{proof}
For ease of discussion, suppose that $aH=Ha$ for all $a \in H$ (see \cite[Proposition 3.1]{Sm13a} for details in the general case). Assume to the contrary that the set $\Omega$ of all nonunits $a \in H$ that are not products of atoms is nonempty. If $a \in \Omega$, then $a=bc$ with nonunits $b,c \in H$, and either $b \in \Omega$ or $c \in \Omega$. Thus for any $a \in H$, there is some $a' \in \Omega$ with $aH \subsetneq a'H$. Starting from an arbitrary $a \in H$, this gives rise to a properly ascending chain of principal ideals, a contradiction.

To verify the second statement,
suppose that $H$ is atomic but not half-factorial. Then there exist an element $a \in H$, integers $k, \ell \in \N$ with $k < \ell$, and atoms $u_1, \ldots, u_k, v_1 \ldots, v_{\ell} \in \mathcal A (H)$ such that $a = u_1 \cdot \ldots \cdot u_k = v_1 \cdot \ldots \cdot v_{\ell}$.  Then for every $N \in \N$ we have
\[
a^N = (u_1 \cdot \ldots \cdot u_k)^{\nu}(v_1 \cdot \ldots \cdot v_{\ell})^{N-\nu} \quad \text{for all} \quad \nu \in [0,N] \,,
\]
and hence $\{ \ell N - \nu (\ell -k) \mid \nu \in [0,N] \} \subset \mathsf L (a^N)$.
\end{proof}

Let $H$ be a monoid. A function $\lambda \colon H \to \N_0$ is called a {\it right length function} (resp. a left length function) if $\lambda (a) < \lambda (b)$ for all $a \in H$ and all $b \in aH  \setminus aH^{\times}$ (resp. all $b \in Ha \setminus H^{\times}a$).

\begin{lemma} \label{2.2}
Let $H$ be a monoid.
\begin{enumerate}
\item $H$ is a \BF-monoid if and only if there is a left length function (or a right length function)  $\lambda \colon H \to \N_0$.

\item If $H$ is a \BF-monoid, then $H$ satisfies the {\rm ACC} on principal left ideals and on principal right ideals.

\item Submonoids of free monoids and of free abelian monoids are \BF-monoids.
\end{enumerate}
\end{lemma}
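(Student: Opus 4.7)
The plan is to establish (1) first and deduce (2) and (3) as short consequences.

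For the forward direction of (1), I would set $\lambda(a)=\max \mathsf L(a)$; this is a well-defined element of $\N_0$ precisely because $H$ is a BF-monoid (and equals $0$ on $H^\times$). The content is to check that $\lambda$ is simultaneously a right and a left length function. If $b=ac$ with $c\in H\setminus H^\times$, concatenating factorizations already yields $\mathsf L(a)+\mathsf L(c)\subset\mathsf L(b)$, and $\max\mathsf L(c)\ge 1$ because any factorization of a non-unit has length at least one; hence $\lambda(b)\ge\lambda(a)+1>\lambda(a)$. The left-sided case is symmetric.

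For the reverse direction, suppose that $\lambda$ is a right length function (the left case is analogous). I would argue via factorizations into mere non-units rather than into atoms. Given $a\in H\setminus H^\times$, consider all representations $a=x_1\cdots x_n$ with each $x_i\in H\setminus H^\times$. Since $x_1\cdots x_i\in(x_1\cdots x_{i-1})H\setminus(x_1\cdots x_{i-1})H^\times$ for every $i\ge 1$, we obtain a strictly increasing chain $\lambda(x_1)<\lambda(x_1x_2)<\cdots<\lambda(x_1\cdots x_n)\le\lambda(a)$, which forces $n\le\lambda(a)+1$. Pick such a representation with $n$ maximal; if any $x_i$ were a product of two non-units, substituting the splitting would lengthen the decomposition and contradict maximality, so every $x_i$ is in fact an atom. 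This simultaneously yields atomicity of $H$ and the bound $\max\mathsf L(a)\le\lambda(a)+1$, so $H$ is a BF-monoid.

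Given (1), the remaining parts are short. For (2), with $\lambda=\max\mathsf L$ an ascending chain $a_1H\subsetneq a_2H\subsetneq\cdots$ forces $a_i\in a_{i+1}H\setminus a_{i+1}H^\times$ (the factor $x_i$ in $a_i=a_{i+1}x_i$ cannot be a unit without collapsing the inclusion), hence $\lambda(a_1)>\lambda(a_2)>\cdots$ in $\N_0$, which must terminate; the left-ideal case is symmetric. For (3), a submonoid $H$ of $\mathcal F^*(P)$ (respectively $\mathcal F(P)$) is automatically reduced, since the only unit of the ambient free (abelian) monoid is the identity; the restriction of the word-length $|\cdot|$ is then both a left and a right length function on $H$ (as $|c|\ge 1$ for every $c\in H\setminus H^\times$), and (1) applies.

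I expect the main obstacle to lie in the reverse direction of (1): a one-sided length function controls only one side of each factorization, so the naive induction \emph{a non-atom splits into two strictly $\lambda$-smaller pieces} is unavailable. The maximal-non-unit-decomposition argument above bypasses this issue by extracting all atoms in one shot from a single $\lambda$-bounded chain rather than recursing on individual factors.
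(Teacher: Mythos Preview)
Your proof is correct and follows essentially the same route as the paper. In particular, your ``maximal non-unit decomposition'' argument for the reverse direction of (1) is exactly the paper's argument: both bound the length of any non-unit decomposition by the strictly increasing chain $\lambda(x_1)<\lambda(x_1x_2)<\cdots<\lambda(a)$, then pick a maximal one to force atomicity. The paper sharpens the bound to $n\le\lambda(a)$ by first noting $\lambda(x_1)>0$ for any non-unit $x_1$, but your $n\le\lambda(a)+1$ is equally sufficient. Your concern in the final paragraph about one-sidedness is not a genuine obstacle here---the chain of partial products sits entirely on one side, which is why both you and the paper avoid any need to recurse on individual factors.
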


\begin{proof}
1. If $H$ is a BF-monoid, then the function $\lambda \colon H \to \N_0$, defined by $a \mapsto \max \mathsf L (a)$ for every $a \in H$, is a right length function and a left length function. Now, if there is a right length function $\lambda \colon H \to N_0$, then we have to show that $H$ is a \BF-monoid (the proof is completely analogous for left length functions).  First we observe  that, if $b \in H^{\times}$ and $c \in H \setminus H^{\times} = bH
\setminus bH^{\times}$, then  $\lambda (c)
> \lambda (b) \ge 0$. We claim that every $a \in H \setminus
H^{\times}$ can be written as a product of finitely many atoms, and that $\sup
\mathsf L (a) \le \lambda (a)$. If $a \in \mathcal A (H)$, then
$\mathsf L (a) = \{1\}$, and the assertion holds. Suppose that $a
\in H$ is neither an atom nor a unit. Then $a$ has a product
decomposition of the form
\[
a = u_1 \cdot \ldots \cdot u_k \quad \text{ where} \quad  k \ge 2 \
\text{ and}  \ u_1, \ldots, u_k \in H \setminus H^{\times} \,.
\]
For $i \in [0, k]$, we set $a_i = u_1 \cdot \ldots \cdot u_i$, with $a_0=1$, and
hence $a_{i+1} \in a_iH \setminus a_iH^{\times}$ for all $i \in [0,
k-1]$. This implies that $\lambda (a) = \lambda (a_k) > \lambda
(a_{k-1}) > \ldots > \lambda (a_1) > 0$ and thus $\lambda (a) \ge
k$. Therefore there is a maximal $k \in \N$  such that $a = u_1
\cdot \ldots \cdot u_k$ where $u_1, \ldots, u_k \in H \setminus
H^{\times}$, and this implies that $u_1, \ldots, u_k \in \mathcal A
(H)$ and $k = \max \mathsf L (a) \le \lambda (a)$.

2. Suppose that $H$ is a \BF-monoid. Let $\lambda \colon H \to \N_0$ be a right length function and assume to the contrary that there is a properly ascending chain of principal right ideals $a_0H \subsetneq a_1H \subsetneq a_2H \subsetneq \ldots $. Then $\lambda (a_0) > \lambda (a_1) > \lambda (a_2) > \ldots$, a contradiction. Similarly, we can show that $H$ satisfies the  ACC on principal left ideals.

3. Use 1., and note that the restriction of a length function is a length function.
\end{proof}

Next  we introduce a main parameter describing the structure of sets of lengths, namely
\[
\Delta (H) = \bigcup_{L \in \mathcal L (H)} \Delta (L)
\]
which is the {\it set of distances} of $H$ (also called the {\it delta set} of $H$).  We open by showing that $\Delta(H)$ satisfies a fundamental property.

\begin{proposition} \label{2.3}
Let $H$ be an atomic monoid with $\Delta (H) \ne \emptyset$. Then $\min \Delta (H) = \gcd \Delta (H)$.
\end{proposition}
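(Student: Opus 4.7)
The plan is to set $d := \min \Delta(H)$ and $g := \gcd \Delta(H)$. Trivially $g \mid d$, so $g \le d$; the real task is to show $d \le g$, which amounts to showing $d \mid e$ for every $e \in \Delta(H)$. If $d = 1$ this is automatic, so I may assume $d \ge 2$. Fix an arbitrary $e \in \Delta(H)$, write $e = qd + r$ with $0 \le r < d$, and argue by contradiction under the assumption $r > 0$.

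The strategy is to manufacture a single element of $H$ whose set of lengths contains three points in an interval of size $d$, the middle one strictly between the outer two --- forcing a distance smaller than $d$ to appear somewhere in $\Delta(H)$. By the definition of $d \in \Delta(H)$, I choose $a \in H$ with $k, k+d \in \mathsf L(a)$; by the definition of $e \in \Delta(H)$, I choose $b \in H$ with $\ell, \ell+e \in \mathsf L(b)$. Using the basic inclusion $\mathsf L(x) + \mathsf L(y) \subseteq \mathsf L(xy)$, which holds in any monoid because concatenating factorizations yields a factorization, the element $c := a^{q+1} b$ satisfies
\[
\bigl\{(q+1)k + id + \ell + \varepsilon : 0 \le i \le q+1,\ \varepsilon \in \{0,e\}\bigr\} \subseteq \mathsf L(c).
\]
Writing $x_i := (q+1)k + id + \ell$, both $x_q$ and $x_{q+1}$ lie in $\mathsf L(c)$, and so does $x_0 + e = x_q + r$, which is strictly between $x_q$ and $x_{q+1}$ since $0 < r < d$.

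To finish, I list the (finitely many, since $\mathsf L(c)$ is finite) elements of $\mathsf L(c) \cap [x_q, x_{q+1}]$ in increasing order as $x_q = z_0 < z_1 < \cdots < z_s = x_{q+1}$. Because $x_q + r$ is one of the $z_t$ strictly between the endpoints, $s \ge 2$. The differences $z_{t+1} - z_t$ are positive integers summing to $d$, and with at least two summands, one of them is strictly less than $d$. Moreover, each such difference genuinely belongs to $\Delta(\mathsf L(c)) \subseteq \Delta(H)$: since the endpoints $x_q, x_{q+1}$ of the interval both lie in $\mathsf L(c)$, "consecutive in $\mathsf L(c) \cap [x_q, x_{q+1}]$" coincides with "consecutive in $\mathsf L(c)$" for every interior pair. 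This produces an element of $\Delta(H)$ smaller than $d = \min \Delta(H)$, the desired contradiction. Hence $r = 0$, so $d \mid e$; letting $e$ vary gives $d \mid g$, whence $d = g$.

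The construction has no really hard step once one notices that mixing $q+1$ copies of $a$ with one copy of $b$ lets the residue $r$ of $e$ modulo $d$ show up as a sub-$d$ gap. The only point that needs a touch of care is the last one --- confirming that the extracted small difference is actually a distance in $\mathsf L(c)$ rather than an artefact of intersecting with the interval $[x_q, x_{q+1}]$; this is rescued precisely by the fact that both $x_q$ and $x_{q+1}$ are themselves in $\mathsf L(c)$.
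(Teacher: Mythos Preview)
Your argument is correct, with one harmless slip: you justify the finiteness of $\mathsf L(c)\cap[x_q,x_{q+1}]$ by asserting that $\mathsf L(c)$ is finite, but the hypothesis is only that $H$ is atomic, not a \BF-monoid, so $\mathsf L(c)$ could be infinite. This does not matter, since $\mathsf L(c)\cap[x_q,x_{q+1}]$ is contained in the finite interval $[x_q,x_{q+1}]$ and is therefore finite regardless; everything else goes through unchanged.

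Your route is genuinely different from the paper's. The paper works from the $\gcd$ side: it writes $d=\gcd\Delta(H)$ as a $\Z$-linear combination $d=\sum m_\nu d_\nu$ of distances $d_\nu\in\Delta(H)$, then takes a suitable product of powers of witnessing elements to produce a single $a\in H$ whose set of lengths contains two values exactly $d$ apart; since $d\le\min\Delta(H)$ this forces $d\in\Delta(H)$, hence $d=\min\Delta(H)$. You instead work from the $\min$ side: setting $d=\min\Delta(H)$, you use Euclidean division on an arbitrary $e\in\Delta(H)$ and manufacture, via $c=a^{q+1}b$, three lengths inside an interval of width $d$, forcing a distance strictly below $d$---a contradiction. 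Both arguments rely only on the sumset inclusion $\mathsf L(x)+\mathsf L(y)\subset\mathsf L(xy)$. The paper's version handles all of $\Delta(H)$ at once via a single B\'ezout combination, while yours treats one $e$ at a time but is arguably more transparent about where the small gap comes from.
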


\begin{proof}
We set $d = \gcd \Delta (H)$.   Clearly, it suffices to show that $d
\in \Delta (H)$. There are $t \in \N, d_1, \ldots, d_t \in \Delta
(H)$ and $m_1, \ldots, m_t \in \Z \setminus \{0\}$ such that $d =
m_1 d_1 + \ldots + m_t d_t$. After renumbering if necessary, there is
some $s \in [1, t]$ such that $m_1, \ldots, m_s$, $-m_{s+1}, \ldots,
-m_t$ are positive. For every $\nu \in [1,t]$, there are $x_{\nu} \in \N$
and $a_{\nu} \in H$ such that
$
\{ x_{\nu}, x_{\nu} + d_{\nu} \} \subset \mathsf L (a_{\nu}) \ \text{for every}
\ \nu \in [1,s]
\quad \text{
and}
\quad
\{ x_{\nu}-d_{\nu}, x_{\nu} \} \subset \mathsf L (a_{\nu}) \ \text{for every}
\ \nu \in [s+1,t]$.
This implies that
\[
\begin{aligned}
\{ m_{\nu}x_{\nu}, m_{\nu}x_{\nu} + m_{\nu}d_{\nu} \} & \subset \mathsf L (a_{\nu}^{m_{\nu}}) \ \text{for every}
\ \nu \in [1,s]
\quad \text{
and} \\
\{ -m_{\nu}x_{\nu}+m_{\nu}d_{\nu}, -m_{\nu}x_{\nu} \} & \subset \mathsf L (a_{\nu}^{-m_{\nu}}) \ \text{for every}
\ \nu \in [s+1,t] \,.
\end{aligned}
\]
We set $a = a_1^{m_1} \cdot \ldots \cdot a_s^{m_s}a_{s+1}^{-m_{s+1}} \cdot \ldots \cdot a_t^{-m_t}$, and observe that $\mathsf L (a) \supset $
\[
\Bigl\{ k  := \sum_{\nu=1}^s m_{\nu} x_{\nu} - \sum_{\nu=s+1}^t m_{\nu} x_{\nu}, \ \ell :=
\sum_{\nu=1}^s m_{\nu}(x_{\nu}+d_{\nu}) - \sum_{\nu=s+1}^t m_{\nu} (x_{\nu}-d_{\nu}) \Bigr\} \,.
\]
Since $\ell - k = d \le \min \Delta (H)$, it follows that  $d \in \Delta ( \mathsf L (a)) \subset \Delta (H)$.
\end{proof}

We now  introduce unions of sets of lengths. Let $H$ be an atomic monoid, and $k, \ell \in \N$. In the extremal case where $H = H^{\times}$ it is convenient to  set $\mathcal U_k (H) = \{k\}$. Now suppose that $H \ne H^{\times}$. We define $\mathcal U_k (H)$ to be the set of all $\ell \in \N$ such that we have an equation of the form
\[
u_1 \cdot \ldots \cdot u_k = v_1 \cdot \ldots \cdot v_{\ell} \quad \text{where} \quad u_1, \ldots, u_k, v_1, \ldots , v_{\ell} \in \mathcal A (H) \,.
\]
In other words, $\mathcal U_k (H)$ is the {\it union of sets of lengths} containing $k$. Clearly, we have $k \in \mathcal U_k (H)$, and $1 \in \mathcal U_k (H)$ if and only if $k=1$, and $\mathcal U_1 (H) = \{1\}$.  Furthermore, we have $k \in \mathcal U_{\ell} (H)$ if and only if $\ell \in \mathcal U_k (H)$. Now we define
\[
\rho_k (H) = \sup \mathcal U_k (H) \quad \text{and} \quad \lambda_k (H) = \min \mathcal U_k (H) \,,
\]
and we call $\rho_k (H)$ the {\it $k$th elasticity} of $H$. Since $\mathcal U_k (H) + \mathcal U_{\ell} (H) \subset \mathcal U_{k+\ell}(H)$, we infer that
 \begin{equation} \label{basic-inequ}
      \lambda_{k+\ell} (H) \le \lambda_k (H) + \lambda_{\ell} (H) \le k+\ell \le \rho_k
      (H) + \rho_{\ell} (H) \le \rho_{k+\ell} (H) \,,
      \end{equation}
and we will use these inequalities without further mention. The invariant
\[
\rho (H) = \sup \{ \rho (L) \mid L \in \mathcal L (H) \} \in \R_{\ge 1} \cup \{\infty\}
\]
is called the {\it elasticity} of $H$. If $k \in \N$ and $L \in \mathcal L (H)$ with $\min L \le k$, then $\sup L \le \rho (H) \min L \le k \rho (H)$ and hence $\rho_k (H) \le k \rho (H)$. Thus if the elasticity $\rho (H)$ is finite, then $H$ is a BF-monoid and $\rho_k (H) < \infty$ for all $k \in \N$.

The $k$th elasticities  $\rho_k (H)$ were first studied for rings of integers of algebraic number fields  and then in the setting of abstract semigroups \cite{Ge-Le90}.
Unions of sets of lengths have been introduced by Chapman and Smith \cite{Ch-Sm90a} in the setting of commutative Dedekind domains, and
the Structure Theorem for Unions of Sets of Lengths (as given in Theorem \ref{2.6}) has first been proved  in \cite{Ga-Ge09b} (in a commutative setting).

\begin{proposition} \label{2.4}
Let $H$ be an atomic monoid with $H \ne H^{\times}$.
\begin{enumerate}
\item For every $k \in \N$ we have
      $\rho_k (H) = \sup \{ \sup L \mid  L \in \mathcal L (H), \ \min L \le k \} \ge \sup \{ \sup L \mid  L \in \mathcal L (H), \ k = \min L \}$, \
      and equality holds if  $\rho_k (H) < \infty$.

\item
      \[
      \begin{aligned}
      \rho (H) & = \sup \Bigl\{ \frac{\rho_{k} (H)}{k} \; \Bigm| \; k \in
      \N \Bigr\} = \lim_{k \to \infty}\frac{\rho_k(H)}{k} \quad \text{and} \\ \frac{1}{\rho (H)}&  =  \inf \Bigl\{ \frac{\lambda_{k} (H)}{k} \; \Bigm| \; k \in
      \N \Bigr\}  = \lim_{k \to \infty}\frac{\lambda_k(H)}{k}   \,.
      \end{aligned}
      \]

\item Suppose that $\rho (H) < \infty$. Then the following statements are equivalent.
      \begin{enumerate}
      \item There is an $L \in \mathcal L (H)$ such that $\rho (L)=\rho (H)$.
      \item There is an $N \in \N$ such that $kN \rho (H) = \rho_{kN} (H)$ for all $k \in \N$.
      \item There is some $k \in \N$ such that $k \rho (H) = \rho_k (H)$.
      \end{enumerate}
      If one of the above statements holds, then there is some $M \in \N$ such that $\rho_k (H) - \rho_{k-1} (H) \le M$ for all $k \ge 2$.
\end{enumerate}
\end{proposition}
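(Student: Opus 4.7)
The plan is to dispatch the three parts in order, relying on two basic ingredients: the atom-multiplication trick (given $a$ with $\min \mathsf L(a) \le k$, adjoining $k - \min \mathsf L(a)$ copies of a fixed atom of $H$ yields an element whose length set contains the translate $\mathsf L(a) + (k - \min \mathsf L(a))$ and in particular contains $k$; such an atom exists because $H$ is atomic and $H \ne H^{\times}$) and the superadditivity $\rho_{k+\ell}(H) \ge \rho_k(H) + \rho_\ell(H)$ from \eqref{basic-inequ}. For Part 1, the identity $\rho_k(H) = \sup\{\sup L \mid L \in \mathcal L(H),\ k \in L\}$ is built into the definition of $\mathcal U_k(H)$, and equality with $\sup\{\sup L \mid \min L \le k\}$ follows from the atom trick: any $L$ with $\min L \le k$ produces a translate lying in $\mathcal U_k(H)$ whose supremum is at least $\sup L$. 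When $\rho_k(H) < \infty$, the set $\mathcal U_k(H) \subset \N$ has an attained maximum, yielding $L \in \mathcal L(H)$ with $k \in L$ and $\sup L = \rho_k(H)$; if one had $\min L = m < k$, a second application of the atom trick would produce an element whose length set contains both $k$ and $\rho_k(H) + (k - m) > \rho_k(H)$, contradicting the definition of $\rho_k(H)$. Hence $\min L = k$, giving the final equality.

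For Part 2, Part 1 together with the trivial bound $\sup L/k \le \sup L/\min L = \rho(L) \le \rho(H)$ whenever $\min L \le k$ shows $\rho_k(H)/k \le \rho(H)$, while the reverse inequality $\sup_k \rho_k(H)/k \ge \rho(H)$ comes from $\rho(L) \le \rho_{\min L}(H)/\min L$ for every $L \in \mathcal L(H)$. The identification of this supremum with the limit is Fekete's lemma applied to the superadditive sequence $(\rho_k(H))_{k \in \N}$. The dual statement for $\lambda_k(H)$ uses subadditivity of $(\lambda_k(H))$ from \eqref{basic-inequ}, the inequality $\min L \ge k/\rho(L) \ge k/\rho(H)$ for any $L$ with $k \in L$, and takes $k = \sup L$ for $L$ with $\rho(L)$ close to $\rho(H)$ to obtain $\lambda_k(H)/k \le 1/\rho(L)$.

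For Part 3, the implication (a) $\Rightarrow$ (c) follows from Part 2 by taking $k = \min L$ for a witness $L$ of (a); (c) $\Rightarrow$ (b) with $N = k$ follows by iterating superadditivity against the upper bound $\rho_{nk}(H) \le nk\rho(H)$ from Part 2; and (b) $\Rightarrow$ (a) uses Part 1's attained-maximum conclusion at $k = N$ to produce $L$ with $\min L = N$ and $\sup L = N\rho(H)$, so that $\rho(L) = \rho(H)$. For the bounded-difference conclusion, assuming (b) with parameter $N$, writing any $j \ge 2$ as $j = qN + r$ with $0 \le r < N$ and using the monotonicity $\rho_{k-1}(H) \le \rho_k(H)$ (itself immediate from \eqref{basic-inequ}) sandwiches both $\rho_{j-1}(H)$ and $\rho_j(H)$ between the consecutive multiples $qN\rho(H)$ and $(q+1)N\rho(H)$ (with the obvious adjustment when $j$ straddles a block boundary), so $\rho_j(H) - \rho_{j-1}(H) \le N\rho(H)$ and any integer $M \ge N\rho(H)$ works.

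The main obstacle I anticipate is the promotion step that appears first in Part 1 and is reused for (b) $\Rightarrow$ (a): upgrading a length set with $\min L \le k$ realizing $\rho_k(H)$ to one with $\min L$ equal to $k$ exactly. Combining the atom trick with the elementary fact that a finite supremum of a subset of $\N$ is attained cleanly resolves both instances.
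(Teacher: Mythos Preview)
Your argument is correct and tracks the paper's proof closely; the atom-multiplication trick and superadditivity from \eqref{basic-inequ} are exactly the two ingredients the paper uses. A few organizational differences are worth noting. For Part~2, where you invoke Fekete's lemma as a black box, the paper instead writes out the underlying $\varepsilon$-argument explicitly; your packaging is cleaner and buys brevity at the cost of citing an external lemma. In Part~3 the paper cycles $(a)\Rightarrow(b)\Rightarrow(c)\Rightarrow(a)$, proving $(a)\Rightarrow(b)$ directly by taking powers $a^k$ of a witness element, whereas you go $(a)\Rightarrow(c)\Rightarrow(b)\Rightarrow(a)$; the two cycles are equally short. For the bounded-difference conclusion, the paper first establishes the uniform deviation bound $k\rho(H)-\rho_k(H)\le N(\rho(H)-1)$ and then combines it with $\rho_k(H)\le k\rho(H)$, while your sandwich of $\rho_{j-1}(H)$ and $\rho_j(H)$ between the consecutive exact values $qN\rho(H)$ and $(q+1)N\rho(H)$ via monotonicity is a slightly more direct route yielding the marginally coarser bound $N\rho(H)$.
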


\begin{proof}
1. Let $k \in \N$. We define $
\rho_k' (H) = \sup \{ \sup L \mid L \in \mathcal L (H), \ \min L \le k \}$,
$\rho_k'' (H) = \sup \{ \sup L \mid L \in \mathcal L (H), \ k = \min L \}$,
and  obtain by definition $\rho_k' (H) \ge \rho_k (H) \ge \rho_k''(H)$. Hence we must prove that $\rho_k (H) \ge \rho_k'
(H)$, and if $\rho_k(H) < \infty$, then also $\rho_k ''(H) \ge
\rho_k' (H)$. Let $a \in H$ with  $\min \mathsf L (a) = \ell \le k$ and $u
\in \mathcal A (H)$. Then $k \in \mathsf L (a u^{k - \ell})$ implies
$\rho_k (H) \ge \sup \mathsf L (a u^{k - \ell}) \ge \sup \mathsf L
(a) + k - \ell \ge \sup \mathsf L (a)$, and therefore $\rho_k (H)
\ge \rho_k' (H)$.
Assume now that $\rho_k (H) < \infty$, and let $a$ be as above
such that  $\sup \mathsf L (a) = \rho_k' (H)$. Then  $\min \mathsf L
(a u^{k - \ell}) \le \min \mathsf L (a) + (k - \ell) = k$ and therefore
\[
\rho_k' (H) \ge \sup \mathsf L (a u^{k - \ell}) \ge \sup \mathsf L (a) + (k - \ell) \ge \rho_k' (H)\,.
\]
Thus $k = \ell = \min \mathsf L (a)$ and $\rho_k'' (H) \ge \sup \mathsf L (a) = \rho_k' (H)$.

2.  If there is a $k \in \N$ such that $\rho_k (H) = \infty$, then all three terms of the first equation are equal to infinity. Suppose   that $\rho_k (H) < \infty$ for all $k \in \N$.
Then the first equality follows from 1. To verify the first limit
assertion, let $\rho' < \rho (H)$ be arbitrary. We must prove that
$\rho_k(H) \ge k \rho'$ for all sufficiently large $k$. If $\rho'
< \rho '' < \rho(H)$, then there exists some $q_0 \in \N$ such
that
\[
\frac{q\rho'' +1}{q+1} > \rho' \quad \text{for all} \quad q \ge q_0 \,,
\]
and there exists some $N \in \N$ such that $\rho_N(H)  > N \rho''$.
If $k > Nq_0$, then $k = Nq + r$ for some $q \ge q_0$ and $r \in [1,N]$, and $\rho_k(H) \ge q \rho_N(H) + \rho_r(H) \ge q
\rho_N(H)  +r$ by Inequality \eqref{basic-inequ}. Since $\rho_N(H) \ge N$, it follows that
\[
\frac{\rho_k(H)}{k} \ge \frac{q\rho_N(H) + r}{qN+r} \ge \frac{q\rho_N(H) + N}{qN + N} > \frac{qN\rho''+ N}{qN + N} > \rho' \,.
\]
Since $
\rho (H) = \sup \Big\{ \frac{m}{n} \; \Bigm| \; m,n \in L , \{0\} \ne L \in \mathcal L (H) \Big\}$,
we have
\[
\frac{1}{\rho (H)} = \inf \Big\{ \frac{m}{n} \; \Bigm| \; m,n \in L , \{0\} \ne L \in \mathcal L (H) \Big\}
\]
with $1/\rho (H)=0$ if $\rho (H)= \infty$. The verification of the second limit assertion runs along the same lines as the proof of the first one (just replace $\rho_k (H)$ by $\lambda_k (H)$ and reverse all inequality signs).

3. In order to show the implication (a) $\Rightarrow$ (b), let $a \in H$ with $L = \mathsf L (a) \in \mathcal L (H)$ such that $\rho (L)=\rho (H)$ and set $N = \min L$. Then $kN \in \mathsf L (a^k)$ for all $k \in \N$, and thus
\[
\frac{\rho_{k N} (H)}{k N} \ge \frac{\sup \mathsf L (a^{k})}{k N} \ge \frac{k \ \sup \ \mathsf L(a)}{k N} = \rho (L) = \rho (H) \ge \frac{\rho_{k N} (H)}{k N}.
\]
The implication (b) $\Rightarrow (c)$ is obvious. Suppose that (c) holds and let $k \in \N$ such that $k \rho (H) = \rho_k (H)$. Let $L \in \mathcal L (H)$ such that $\min L = k$ and $\rho_k (H) = \max L$. Then $k \rho (H) = \max L = k \rho (L)$ and hence $\rho (L) = \rho (H)$.

Suppose that the equivalent statements hold and let $N \in \N$ such that $mN \rho (H) = \rho_{mN} (H)$ for all $m \in \N$. Let $k \in \N$ be given and set $k=iN+j$ with $i \in \N_0$ and $j \in [1,N]$. Then we infer that $iN\rho (H) + j \le \rho_{iN}(H)+\rho_j(H) \le \rho_{iN+j}(H)$
and
\[
(iN+j)\rho (H) -N(\rho(H)-1) \le (iN+j)\rho (H) -j (\rho (H)-1) \le \rho_{iN+j}(H) \,.
\]
Thus $k \rho (H) - \rho_k (H) \le M' := N(\rho(H)-1)$ for all $k \in \N$. If $k \ge 2$, then $\rho_k (H) \le k \rho (H)$, $(k-1)\rho (H) - \rho_{k-1}(H) \le M'$, and hence $\rho_k (H) - \rho_{k-1} (H) \le M'+\rho (H)$.
\end{proof}

\medskip
Unions of sets of lengths do have -- in a variety of settings -- a structure which is only slightly more general than that of arithmetical progressions. In order to show this, we introduce the concept of almost arithmetical progressions.

\begin{definition} \label{2.5}
Let \ $d \in \mathbb N$ \ and \ $M \in \mathbb N_0$. A subset \ $L
\subset \mathbb Z$ \ is called an {\it almost arithmetical
progression} ({\rm AAP} for short) with {\it difference} $d$ and
{\it bound} $M$ if
\[
L = y + (L' \cup L^* \cup L'') \subset y + d \mathbb Z
\]
where $y\in \Z$ and $L^*$ is a nonempty (finite or infinite) arithmetical progression with difference
$d$  such that $\min L^* = 0$, $L' \subset [-M,-1]$, $L'' \subset
\sup L^* + [1, M]$ (with the convention that $L'' = \emptyset$ if
$L^*$ is infinite).
\end{definition}

Clearly, every single finite set is an AAP with a particularly trivial choice of $M$ and $d$. Suppose we have an atomic monoid with nonempty set of distances. Then, by Lemma \ref{2.1}, sets of lengths become arbitrarily large, whence the unions $\mathcal U_k (H)$ are growing as $k$ is growing. The next theorem states  (under the given assumptions) that all unions $\mathcal U_k (H)$ are AAPs with the critical point  that a single choice of $M$ works for all sufficiently large $k$. Once this $M$ is chosen, it says that all unions have a "middle" piece that becomes larger and larger as $k$ grows and that "middle" part has a very rigid structure. The poorly behaved "end" pieces are bounded in size by a constant and in their distance from the middle.

\begin{theorem}[{\bf Structure Theorem for Unions of Sets of Lengths}] \label{2.6}~
Let  $H$  be an atomic monoid with finite nonempty set of
distances  $\Delta (H)$ and $d = \min \Delta (H)$. Suppose that
either,  $\rho_k (H) = \infty$ \ for some $k \in \mathbb N$, or
that there is an $M \in \mathbb N$ such that \ $\rho_k (H) -
\rho_{k-1} (H) \le M$ \ for all $k \ge 2$.
\begin{enumerate}
\item There exist constants \ $k^*$ \ and \ $M^* \in \mathbb N$ \ such
      that  for all $k \ge k^*$, \ $\mathcal U_k (H)$ \ is an {\rm
      AAP} with difference $d$ and bound $M^*$. Moreover, if \
      $\rho_k (H) < \infty$ \ for all $k \in \N$, then the assertion
      holds for \ $k^* = 1$.

\item We have
      \[
      \lim_{k \to \infty} \frac{|\mathcal U_k (H)|}{k} = \frac{1}{d}
      \Bigl( \rho (H) - \frac{1}{\rho (H)} \Bigr) \,.
      \]
\end{enumerate}
\end{theorem}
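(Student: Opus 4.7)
The plan hinges on three ingredients: the constraint $\mathcal U_k(H) \subset k + d\Z$ derived from Proposition 2.3, an explicit construction of long arithmetic progressions inside $\mathcal U_k(H)$ via a witness element of distance $d$, and the hypothesis controlling the growth of $\rho_k(H)$.

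First, by Proposition 2.3, $d = \gcd \Delta(H)$, so every set of lengths lies in a single residue class modulo $d$, and consequently $\mathcal U_k(H) \subset k + d\Z$ for every $k$. Fix $c \in H$ with $\{s, s+d\} \subset \mathsf L(c)$, which exists because $d \in \Delta(H)$. Then $\mathsf L(c^n) \supset n \cdot \{s, s+d\} = \{ns + jd : j \in [0,n]\}$ for every $n \in \N$. For any $k \ge ns$, the product $c^n b$ with $b \in H$ chosen so that $k - ns \in \mathsf L(b)$ satisfies $\mathsf L(c^n b) \supset \{k + jd : j \in [0,n]\}$, whence this arithmetic progression lies in $\mathcal U_k(H)$.

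Next, I split on the hypothesis. When $\rho_k(H) = \infty$ for some $k$, the monotonicity $\rho_{k+1}(H) \ge \rho_k(H) + 1$ (which follows from $\mathcal U_k + \mathcal U_1 \subset \mathcal U_{k+1}$) forces $\rho_k(H) = \infty$ for all sufficiently large $k$; letting $n \to \infty$ in the construction above then gives $\mathcal U_k(H) \supset k + d\N_0$, and a dual downward construction handles $[\lambda_k, k) \cap \mathcal U_k$ up to bounded error, yielding the AAP structure with some uniform $M^\ast$. When instead all $\rho_k(H)$ are finite and $\rho_k - \rho_{k-1} \le M$, I would pair the $c$-witness with a second element $e \in H$ whose set of lengths has elasticity close to $\rho(H)$: writing $\{N_e, R_e\} \subset \mathsf L(e)$ with $R_e - N_e = td$, the product $c^{t-1} e^n b$ (with $b$ adjusting the residue) satisfies $\mathsf L(c^{t-1} e^n b) \supset \{k + qd : q \in [0, nt + t - 1]\}$, an AP reaching up to roughly $k \rho(\mathsf L(e))$. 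The bounded-increment hypothesis forces the gap $\rho_k - k\rho(\mathsf L(e))$ to be uniformly bounded once $e$ is chosen with elasticity close enough to $\rho(H)$ and $k$ is sufficiently large, producing the bound $M^\ast$. A dual argument handles the lower side.

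Finally, claim (2) is immediate from (1): the AAP structure gives $|\mathcal U_k(H)| = (\rho_k(H) - \lambda_k(H))/d + O(1)$ uniformly in $k$, and dividing by $k$ and invoking Proposition 2.4(2) (which supplies $\rho_k/k \to \rho(H)$ and $\lambda_k/k \to 1/\rho(H)$) produces the claimed limit. The main obstacle is closing the gap between the AP supplied by the $c$-witness construction and the actual value $\rho_k(H)$ using only the bounded-increment hypothesis; this requires choosing the auxiliary element $e$ so that its elasticity matches $\rho(H)$ up to an admissible finite error, and then coordinating the $c$-based and $e$-based constructions so that every residue class modulo $d$ within the target interval is hit simultaneously by the same element of $H$.
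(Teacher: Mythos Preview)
There is a genuine gap in the finite-$\rho_k$ case. Your plan is to build, for each $k$, a \emph{single} element $c^{t-1}e^n b$ whose set of lengths contains an arithmetic progression of difference $d$ reaching from $k$ up to roughly $k\,\rho(\mathsf L(e))$, and then to argue that the bounded-increment hypothesis forces $\rho_k(H) - k\,\rho(\mathsf L(e))$ to be uniformly bounded. That last step fails: the hypothesis $\rho_k - \rho_{k-1} \le M$ does not control $\rho_k - k\,\rho(\mathsf L(e))$ for any fixed $e$. Even $\rho_k - k\,\rho(H)$ need not be bounded (it is consistent with bounded increments that $\rho_k - k\rho(H)$ grows like $\sqrt{k}$, say), and if $\rho(H)$ is not achieved then $k\bigl(\rho(H)-\rho(\mathsf L(e))\bigr)$ contributes an additional term growing linearly in $k$. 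So no choice of $e$ closes the gap, and the ``main obstacle'' you name at the end is in fact fatal to this route.

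The idea you are missing is to stop trying to witness everything with one element and instead exploit the sumset inclusion $\mathcal U_{k_0}(H) + \mathcal U_{k-k_0}(H) \subset \mathcal U_k(H)$ for a \emph{fixed} $k_0$. Take $\psi = \max\Delta(H)/d$ and $k_0 = \psi s$; then $c^{\psi}$ supplies a full AP $U^* = \{k_0, k_0+d,\ldots,k_0+\psi d\} \subset \mathcal U_{k_0}(H)$ whose range $\psi d = \max\Delta(H)$ is at least the largest gap occurring in $\mathcal U_{k-k_0}(H)$. Hence $U^* + \mathcal U_{k-k_0}(H)$ is a full AP of difference $d$ whose maximum is $\max U^* + \rho_{k-k_0}(H)$, and now the bounded-increment hypothesis applies directly: $\rho_k(H) - \rho_{k-k_0}(H) \le k_0 M$, which is a constant independent of $k$. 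The same mechanism handles the case $\rho_k(H)=\infty$ (your ``let $n\to\infty$'' does not work as written, since for fixed $k$ you need $k \ge ns$, forcing $n \le k/s$); and the lower half $[\lambda_k(H)+M^*,k]$ follows by the duality $m \in \mathcal U_k(H) \Leftrightarrow k \in \mathcal U_m(H)$ applied to the upper-half result. Your argument for part~(2) is fine once part~(1) is in place.
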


\begin{proof}
1.  For every $k \in \N$, we have $\lambda_k (H), k \in \mathcal U_k (H)$, and since every $d' \in \Delta (H)$ is a multiple of $d$ by Proposition \ref{2.3}, it follows that $\mathcal U_k (H) \subset \lambda_k (H) + d \mathbb N_0$. Thus it
remains to show that there exist constants $k^*, M^* \in \mathbb N$
such that, for all $k \ge k^*$,
\[
\mathcal U_k (H) \cap [k, \rho_k (H) - M^*] \quad \text{and} \quad
\mathcal U_k (H) \cap [\lambda_k (H) +M^*, k] \tag{$*$}
\]
are arithmetical progressions with difference $d$ (note, if $\rho_k (H) = \infty$, then $[k, \rho_k (H) - M^*] = \N_{\ge k}$). We break the proof into three steps.

1.(a) First, we show that the left set in ($*$) is an arithmetical progression. Since $d \in \Delta (H)$, there is an element $a \in H$ and $m
\in \mathbb N$ such that $\{m, m+d\} \subset \mathsf L (a)$. Since $\Delta (H)$ is finite and
$\min \Delta (H) = \gcd \Delta (H)$ by Proposition \ref{2.3}, $\psi := \rho \bigl( \Delta (H) \bigr)  \in
\mathbb N$. Then the $\psi$-fold sumset satisfies the containment
\[
U^* := \psi \{m, m+d\} = \{\psi m  , \psi m + d, \ldots, \psi m + \psi d \}
\subset \mathsf L (a^{\psi}) \,.
\]
We set  $k_0 = \psi m$ and observe that $U^*   \subset \mathcal U_{k_0} (H)$, say
$\mathcal U_{k_0} (H) = U' \uplus U^* \uplus U''$,
where $U' = \{ x \in \mathcal U_{k_0} (H) \mid x < k_0 \}$ and $U'' = \{ x \in \mathcal U_{k_0} (H) \mid x > k_0+\psi d\}$.
Let $k^* \in \N$ with  $k^* \ge  2k_0$. If there is some $\ell \in
\mathbb N$ with $\rho_{\ell} (H) = \infty$, then let $\ell_0$ denote the smallest
such $\ell \in \mathbb N$, and we suppose further that $k^* - k_0 \ge
\ell_0$. Now let $k \ge k^*$ be given. Then \ $\mathcal U_{k_0} (H) + \mathcal U_{k-k_0} (H)  =$
\[
 \Bigl( U' +
\mathcal U_{k-k_0} (H) \Bigr) \cup \Bigl( U^* + \mathcal U_{k-k_0}
(H) \Bigr) \cup \Bigl( U'' + \mathcal U_{k-k_0} (H) \Bigr)
  \subset \mathcal U_k (H) \,.
\]
Clearly, we have $k \in U^* + \mathcal U_{k-k_0} (H)$. Since \ $\max
\Delta \bigl( \mathcal U_{k-k_0} (H) \bigr) \le \max \Delta (H)$ \
and \ $\Delta \bigl( \mathcal U_{k-k_0} (H) \bigr) \subset d \N$, it follows that $U^* + \mathcal
U_{k-k_0} (H)$ is an arithmetical progression with difference $d$.
If there is some $\ell \in \mathbb N$ such that $\rho_{\ell} (H) = \infty$,
then \ $\rho_{k-k_0} (H) = \rho_k (H) = \infty$ \ and
\[
\bigl( U^* + \mathcal U_{k-k_0} (H) \bigr) \cap \mathbb N_{\ge k} =
k + d \mathbb N_0 = \mathcal U_k (H) \cap \mathbb N_{\ge k} \,.
\]
Suppose that $\rho_{\ell} (H) < \infty $ for all $\ell \in \mathbb N$. Then
\[
\max \mathcal U_k (H) - \max \Bigl( U^* + \mathcal U_{k-k_0} (H)
\Bigr) = \rho_k (H) - \max U^* - \rho_{k-k_0} (H) \le k_0 M \,,
\]
and hence
\[
\bigl( U^* + \mathcal U_{k-k_0} (H) \bigr) \cap [k, \rho_k (H) - k_0
M] = \mathcal U_k (H) \cap [k, \rho_k (H) - k_0 M]
\]
is an arith. progression with difference $d$. Thus the assertion follows with $M^* = k_0 M$.

1.(b). By 1.(a), there are $k^*, M^* \in \mathbb N$ such that for
all $k \ge k^*$, the set $\mathcal U_k (H) \cap [k, \rho_k (H) -
M^*]$ is an arithmetical progression with difference $d$. Without
restriction we may suppose that $M^* \ge k^*$.

Let $k \ge k^*$ and $\ell = \lambda_k (H)$. We show that $\mathcal U_k
(H) \cap [\ell+M^*, k]$ is an arithmetical progression with difference
$d$. Let $m \in [\ell+M^*, k]$ be such that $k-m$ is a multiple of $d$. In
order to show that $m \in \mathcal U_k (H)$, we verify that $k \in
\mathcal U_m (H)$. Since
\[
k \le \rho_{\ell} (H) \quad \text{and} \quad \ell + M^* \le m \,,
\]
it follows that $k \le \rho_{\ell} (H) \le \rho_{m - M^*} (H)$ and hence
\[
k + M^* \le \rho_{m - M^*} (H) + M^* \le \rho_{m - M^*} (H) +
\rho_{M^*} (H) \le \rho_m (H) \,.
\]
Since $k \in m + d \mathbb N_0$ with $k \le \rho_m (H) - M^*$ and
$\mathcal U_m (H) \cap [m, \rho_m (H) - M^*]$ is an arithmetical
progression with difference $d$, it
follows that $k \in \mathcal U_m (H)$.

1.(c). Suppose that \ $\rho_k (H) < \infty$ \ for all  $k \in \N$. In this  step, we show that for all $k \in \N$ the sets  $\mathcal U_k (H)$ are AAPs with difference $d$ and some bound $\widetilde M$. Suppose that  the assertion holds with the constants $k^*$ and $M^* \in
\N$. Since for all \ $k \in [1, k^*-1]$,
\[
\mathcal U_k (H) = \bigl( [\lambda_k (H), k-1] \cap \mathcal U_k (H)
\bigr) \cup \{k\} \cup \bigl( [k+1, \rho_k (H)] \cap \mathcal U_k
(H) \bigr)
\]
is an AAP with bound $M' = \max \{ k - \lambda_k (H), \rho_k (H) - k
\mid k \in [1, k^*-1] \}$, it follows that for all $k \in \N$ the
sets \ $\mathcal U_k (H)$ \ are AAPs with difference $d$ and bound
$\widetilde M = \max \{M^*, M' \}$.

2. If there is some \ $k \in \mathbb N$ \ such that \ $\rho_k (H) =
\infty$, then both the left and the right hand side of the asserted
equation are infinite. Suppose that
$\rho_k (H) < \infty$ \ for all \ $k \in \mathbb N$. \ By 1. there
are \ $k^* \in \mathbb N$ \ and \ $M^* \in d\N$ \ such that, for all
$k \ge k^*$, \ $\mathcal U_k (H) \cap [\lambda_k (H) + M^*, \rho_k
(H) - M^*]$ \ is an arithmetical progression with difference $d$.
Thus for all \ $k \ge k^*$ \ we obtain that
\[
\frac{\bigl( \rho_k (H) - M^*\bigr) - \bigl(\lambda_k (H) +
M^*\bigr) + d}{dk} \le \frac{|\mathcal U_k (H)|}{k} \le \frac{\rho_k
(H) - \lambda_k (H) + d }{dk} \,.
\]
Since, by Proposition \ref{2.4}.2, $
\lim_{k \to \infty} \frac{\rho_k (H)}{k} = \rho (H) \quad \text{and}
\quad \lim_{k \to \infty} \frac{\lambda_k (H)}{k} = \frac{1}{\rho
(H)}$,
the assertion follows.
\end{proof}

We end this section with a discussion of finitely presented monoids and of commutative finitely generated monoids. Let $H$ be a monoid. For every generating set $P$ of $H$, there is an epimorphism $\psi \colon \mathcal F^* (P) \to H$ and $\mathcal F^* (P)/ \ker ( \psi) \cong H$, where
\[
\ker (\psi) = \{(x,y) \in \mathcal F^* (P) \times \mathcal F^* (P) \mid \psi (x) = \psi (y) \}
\]
is a congruence relation on $\mathcal F^* (P)$. If there is a finite generating set $P$ and a finite set of relations $R \subset \ker (\psi)$ which generates $\ker (\psi)$ as a congruence relation, then $H$ is said to be {\it finitely presented} (by $P$ and $R$). If $R = \{(x_1, y_1), \ldots, (x_t, y_t)\}$, then we write (as usual) $H = \langle P \mid R \rangle = \langle P \mid x_1=y_1, \ldots, x_t=y_t \rangle$.

\begin{proposition} \label{2.7}
If $H = \langle \mathcal A (H) \mid R \rangle$ is a reduced atomic  monoid with a finite set of relations $R$, then the set of distances $\Delta (H)$ is finite.
\end{proposition}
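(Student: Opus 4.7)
The plan is to bound $\max \Delta (H)$ by a constant that can be read off from the finite relation set $R$. Write $R = \{(x_1, y_1), \ldots, (x_t, y_t)\}$ with $x_i, y_i \in \mathcal F^* (\mathcal A (H))$, and set
\[
N = \max \bigl\{ \bigl| |x_i| - |y_i| \bigr| : i \in [1, t] \bigr\} \,.
\]
This is a well-defined non-negative integer because $R$ is finite and each $x_i, y_i$ is a finite word. I claim that every $d \in \Delta (H)$ satisfies $d \le N$, which immediately forces $\Delta (H) \subset [1, N]$ and hence finiteness.

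To prove the claim, fix $d \in \Delta (H)$ and pick $a \in H$ with factorizations into atoms $u_1 \cdot \ldots \cdot u_k = a = v_1 \cdot \ldots \cdot v_{k+d}$, where $k$ and $k+d$ are consecutive elements of $\mathsf L (a)$. The decisive step is to link the two factorizations by a chain of elementary relation-rewrites: since $\ker (\psi)$ is, by assumption, the congruence on $\mathcal F^* (\mathcal A (H))$ generated by $R$, there exists a sequence $w_0, w_1, \ldots, w_s \in \mathcal F^* (\mathcal A (H))$ with $w_0 = u_1 \cdot \ldots \cdot u_k$, $w_s = v_1 \cdot \ldots \cdot v_{k+d}$, and each $w_{j+1}$ obtained from $w_j$ by replacing one contiguous occurrence of some $x_i$ by $y_i$, or vice versa.

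Each $w_j$ is again a word in the atoms of $H$ and represents $a$, so $|w_j| \in \mathsf L (a)$, and by construction $\bigl| |w_{j+1}| - |w_j| \bigr| \le N$ for every transition. Since $k$ and $k+d$ are consecutive in $\mathsf L (a)$, none of the $|w_j|$ can lie in the open interval $(k, k+d)$; that is, each $|w_j|$ is either $\le k$ or $\ge k+d$. As the chain starts at $|w_0| = k$ and ends at $|w_s| = k+d$, there must be an index $j$ with $|w_j| \le k$ and $|w_{j+1}| \ge k+d$, and this forces $d \le |w_{j+1}| - |w_j| \le N$.

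The step that I expect to require the most care is the invocation of the standard description of the congruence generated by a set of relations on a \emph{free} (not free abelian) monoid: each rewrite has to substitute a contiguous factor, and one must confirm that the resulting word is again an element of $\mathcal F^* (\mathcal A (H))$ so that its length genuinely lies in $\mathsf L (a)$. Once that bookkeeping is in place, the monotone length-tracking above completes the argument, and the possibly non-commutative nature of $H$ plays no further role.
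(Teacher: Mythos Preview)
Your proof is correct and follows essentially the same route as the paper: both set the bound $N = \max\{\,||x_i|-|y_i||\,\}$ and use a rewrite chain in $\mathcal F^*(\mathcal A(H))$ coming from the congruence generated by $R$ to connect two factorizations of $a$. You are merely more explicit than the paper in spelling out the gap-crossing step (locating an index $j$ with $|w_j|\le k$ and $|w_{j+1}|\ge k+d$), which the paper compresses into the single remark that $\Delta(\{|v_0|,\ldots,|v_n|\})\subset[1,M]$.
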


\begin{proof}
We set $A = \mathcal A (H)$, $\psi \colon \mathcal F^* (A) \to H$,  $R = \{(x_1, y_1), \ldots, (x_t, y_t)\}$,  $M = \max \{ \big| |x_1|-|y_1| \big|, \ldots, \big| |x_t|-|y_t| \big| \}$, and  assert that $\Delta (H) \subset [1, M]$. Let $a \in H$. Then $\mathsf L (a) = \{ |x| \mid x \in \mathcal F^* (A) \ \text{with} \ \psi (x)=a \}$. We choose two words $v, w \in \mathcal F^* (A)$ with $\psi (v)=\psi (w)=a$. Since $\mathcal F^* (P)/ \ker ( \psi) \cong H$ and $\ker (\psi)$ is generated by $R$ (as a congruence), there is a sequence of words $v=v_0, \ldots, v_n=w$ in $\psi^{-1} (a) \subset \mathcal F^* (A)$ where $v_{\nu}$ arises from $v_{\nu-1}$ by replacing  $x_i$ by $y_i$ for some $i \in [1,t]$ and all $\nu \in [1, n]$. We set $L = \{ |v_{0}|, \ldots, |v_n|\}$ and obtain    $\Delta (L) \subset [1,M]$. Thus it follows that $\Delta \big( \mathsf L (a) \big) \subset [1,M]$ and hence $\Delta (H) \subset [1,M]$.
\end{proof}

There are atomic finitely presented monoids $H$ such that $\rho (H) = \rho_k (H) = \infty$ for all $k \ge 2$. To provide an example, consider the monoid $H = \langle a, b \mid a^2 = b a^2 b \rangle$ (note that $H$ is an Adyan semigroup and hence cancelative, see \cite[Section 2]{Ba-Sm15} for details). Obviously, $H$ is finitely presented and atomic with $\mathcal A (H) = \{a, b\}$,  and $\rho_2 (H) = \infty$ which implies that $\rho_k (H) = \infty $ for all $k \ge 2$. Since $Ha^2b^i \subsetneq Ha^2b^{i+1}$ for all $i \in \N$, $H$ does not satisfy the ACC on principal left ideals. As another example, the monoid $H = \langle a, b \mid a=bab \rangle$ is a finitely presented monoid which is not atomic (note that $a$ is not a finite product of atoms).
However, this behavior cannot occur in the case of commutative monoids. The next result shows in particular that finitely generated commutative monoids satisfy all assumptions of Theorem \ref{2.6}, and hence they satisfy the Structure Theorem for Unions of Sets of Lengths.

\begin{proposition} \label{2.8}
Let $H$ be a  reduced commutative monoid.
\begin{enumerate}
\item For a subset $A \subset \mathcal A (H)$ the following statements are equivalent.
      \begin{enumerate}
      \item $H$ is atomic and $A = \mathcal A (H)$.

      \item $A$ is the smallest generating set of $H$ (with respect to set  inclusion).

      \item $A$ is a minimal generating set of $H$.
      \end{enumerate}

\item $H$ is finitely generated if and only if $H$ is atomic and $\mathcal A (H)$ is finite.

\item Suppose that $H$ is finitely generated.
      Then $H$ is a \BF-monoid with finite set of distances and finite elasticity. Moreover, there is an $L \in \mathcal L (H)$ such that $\rho (L) = \rho (H)$, and there is an $M \in \N$ such that $\rho_k (H) - \rho_{k-1} (H) \le M$ for all $k \ge 2$.
\end{enumerate}
\end{proposition}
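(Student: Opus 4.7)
For Part~1, the plan is to exploit that $H$ is reduced, commutative, and cancelative. The implication (a)~$\Rightarrow$~(b) follows because any generating set $A'$ must contain $\mathcal A (H)$: given $a \in \mathcal A (H)$, write $a$ as a product of elements of $A'$, discard copies of $1$, and use irreducibility of $a$ together with the fact that every remaining factor is a non-unit to force exactly one factor equal to $a$. The implication (b)~$\Rightarrow$~(c) is formal. For (c)~$\Rightarrow$~(a), minimality rules out $1 \in A$. If some $a \in A$ were not an atom, say $a = bc$ with $b, c \in H \setminus \{1\}$, then writing $b = \prod_{x \in A} x^{b_x}$ and $c = \prod_{x \in A} x^{c_x}$ yields $a = \prod_{x \in A} x^{b_x + c_x}$; the case $b_a + c_a = 0$ contradicts minimality of $A$, while $b_a + c_a \ge 1$ forces, after cancelling one copy of $a$, $1 = a^{b_a + c_a - 1} \prod_{x \ne a} x^{b_x + c_x}$, and reducedness collapses this to $\{b, c\} = \{1, a\}$, a contradiction. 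Hence $A \subset \mathcal A (H)$, so $A$ generates atomically and by irreducibility coincides with $\mathcal A (H)$.

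Part~2 is then immediate: a finite generating set can always be pruned to a minimal one, which by Part~1 equals $\mathcal A (H)$; conversely, $\mathcal A (H)$ generates when $H$ is atomic. For Part~3 we may thus assume $\mathcal A (H) = \{u_1, \ldots, u_n\}$ is finite. By R\'edei's theorem, $H$ is finitely presented as a commutative monoid, and adjoining the finitely many commutation relations $u_i u_j = u_j u_i$ produces a finite presentation in the monoid sense, so Proposition~\ref{2.7} gives $|\Delta (H)| < \infty$.

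For BF-ness and finiteness of the elasticity I would pass from $H$ to the torsion-free quotient $\Z^r$ of $\mathsf q (H)$. The composition $\phi \colon H \to \mathsf q (H) \to \Z^r$ has salient image: if $\phi (h_1) = - \phi (h_2)$ for $h_1, h_2 \in H$, then $h_1 h_2$ is torsion in $\mathsf q (H)$, so $(h_1 h_2)^n = 1$ in $H$, which by reducedness forces $h_1 = h_2 = 1$. Consequently the rational cone spanned by $\phi (u_1), \ldots, \phi (u_n)$ is salient, and a separating hyperplane yields an integer linear functional $\ell$ on $\Z^r$ with $c := \min_i \ell (\phi (u_i)) \ge 1$ and $C := \max_i \ell (\phi (u_i))$. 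Any atomic factorization $a = v_1 \cdot \ldots \cdot v_k$ then satisfies $kc \le \ell (\phi (a)) \le kC$, so $\mathsf L (a) \subset [\ell (\phi (a))/C, \ell (\phi (a))/c]$ is finite ($H$ is a BF-monoid) and $\rho (H) \le C/c < \infty$.

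For the existence of $L \in \mathcal L (H)$ with $\rho (L) = \rho (H)$, let $\pi \colon \N_0^n \to H$ be the factorization map $e_i \mapsto u_i$ and set $S = \{(x, y) \in \N_0^n \times \N_0^n \mid \pi (x) = \pi (y)\}$. Since $H$ is cancelative commutative, $\pi (x) = \pi (y)$ is equivalent to $x - y$ lying in the fixed subgroup $K$ of $\Z^n$ obtained as the kernel of the induced map $\Z^n \to \mathsf q (H)$, so $S$ is $\N_0^{2n}$ intersected with a subgroup of $\Z^{2n}$ and hence a finitely generated affine semigroup by Gordan's lemma; pick generators $(x^{(1)}, y^{(1)}), \ldots, (x^{(s)}, y^{(s)})$. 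Reducedness rules out $x^{(i)} = 0 \ne y^{(i)}$ on any generator (otherwise a nonempty product of atoms would equal $1$), so $|x^{(i)}|, |y^{(i)}| \ge 1$ throughout. The elementary inequality
\[
\frac{\sum_i \nu_i |y^{(i)}|}{\sum_i \nu_i |x^{(i)}|} \le \max_i \frac{|y^{(i)}|}{|x^{(i)}|}
\]
and its symmetric analog then give $\rho (H) = \max_i \max(|x^{(i)}|, |y^{(i)}|)/\min(|x^{(i)}|, |y^{(i)}|)$, attained at $L = \mathsf L (\pi (x^{(j)}))$ for the maximizing index $j$. Finally, $\rho_k (H) - \rho_{k-1} (H) \le M$ follows immediately from Proposition~\ref{2.4}.3. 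The main obstacle is this last attainment step, where one must leave the purely arithmetical setting and invoke the Gordan finiteness of $S$ together with the ratio-of-sums bound.
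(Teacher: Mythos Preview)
Your proof is correct. Parts~1 and~2 match the paper's argument (you are slightly more explicit in the cancellation step of (c)\,$\Rightarrow$\,(a)). In Part~3 you diverge in two places. The paper does not establish the BF-property separately: it first shows, via Dickson's Lemma applied to $S^* \subset \N_0^{2n}$, that $\rho(H) = \sup\{|x|/|y| : (x,y)\in S^*\}$ is attained on the finite set $T$ of minimal points of $S^*$, hence is finite, and BF-ness then follows from atomicity plus $\rho(H)<\infty$. Your salient-cone and separating-hyperplane argument instead constructs an explicit length function in the sense of Lemma~\ref{2.2} and yields the bound $\rho(H)\le C/c$ directly, before any finiteness result on $S$ is invoked---a more constructive route that buys you BF-ness independently. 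For the attainment of $\rho(H)$ you invoke Gordan's Lemma (using that cancellativity makes $S$ the intersection of $\N_0^{2n}$ with a sublattice of $\Z^{2n}$) in place of the paper's Dickson's Lemma; the paper's induction on $|x|+|y|$ is exactly your ratio-of-sums inequality unfolded, so the two attainment arguments are close cousins differing mainly in packaging. You also correctly note that R\'edei's theorem yields a presentation over the free \emph{commutative} monoid and that the commutation relations must be adjoined before Proposition~\ref{2.7} applies; the paper elides this step.
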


\begin{proof}
1.(a) $\Rightarrow$ (b) Since $H$ is atomic, $A$ is a generating set of $H$ and it remains to show that it is the smallest one. Let $A'$ be any generating set of $H$. If $u \in A$, then $u = v_1 \cdot \ldots \cdot v_k$ with $k \in \N$ and $v_1, \ldots, v_k \in A' \setminus \{1\}$. Since $u$ is an atom, it follows that $k=1$ and $u = v_1 \in A'$. The implication (b) $\Rightarrow (c)$ is obvious.

(c) $\Rightarrow$ (a) It suffices to verify that $A = \mathcal A (H)$. Since $A$ is a minimal generating set, it follows that $1 \notin A$. If $u \in \mathcal A (H)$, then $u = v_1 \cdot \ldots \cdot v_k$ with $k \in \N$ and $v_1, \ldots, v_k \in A$. This implies $k=1$, $u =v_1 \in A$, and thus $\mathcal A (H) \subset A$. Assume to the contrary that there is some $u \in A \setminus \mathcal A (H)$, say $u=vw$ with $v,w \in H \setminus \{1\}$. Then $v = u_1 \cdot \ldots \cdot u_m$ and $w=u_{m+1} \cdot \ldots \cdot u_n$ where $n \in \N_{\ge 2}$, $m \in [1, n-1]$, and $u_1, \ldots, u_n \in A$. Therefore we obtain that $u=u_1 \cdot \ldots \cdot u_n$ and $u \notin \{u_1, \ldots, u_n \}$. Thus $A \setminus \{u\}$ is a generating set of $H$, a contradiction.

2. This assertion follows directly from 1.

3. By 1. and 2., $H$ is atomic and $A = \mathcal A (H)$ is the smallest generating set.
By Redei's Theorem, every finitely generated commutative monoid is finitely presented. Thus $\Delta (H)$ is finite by Proposition \ref{2.7}. Next we show that there is an $L \in \mathcal L (H)$ such that $\rho (H) = \rho (L) < \infty$. This implies  that $H$ is a BF-monoid and  by Proposition \ref{2.4}.3 it follows that there is an $M \in \N$ such that $\rho_k (H) - \rho_{k-1} (H) \le M$ for all $k \ge 2$.

Let $\pi \colon \mathcal F (A)  \to H$ be the canonical epimorphism. We set
\[
S = \{(x,y) \in \mathcal F (A) \times \mathcal F (A) \mid \pi (x) = \pi
(y) \} \quad \text{and} \quad S^* = S \setminus \{(1,1)\}\,,
\]
and we observe that $\rho(H) = \sup \Bigl\{ \frac{|x|}{|y|}\; \Bigm| \;(x,y) \in S^*
\Bigr\}$.
Clearly, it is sufficient to show that this supremum is attained for some pair $(x,y) \in S^*$. There is an isomorphism $f \colon \mathcal F (A) \times \mathcal F (A) \to (\N_0^{A} \times \N_0^{A}, +)$, defined by $(\prod_{u \in A}u^{m_u}, \prod_{u \in A}u^{n_u}) \mapsto \bigl( (m_u)_{u \in A}, (n_u)_{u \in A} \bigr)$. By Dickson's Theorem \cite[Theorem 1.5.3]{Ge-HK06a}, the set $f (S^*)$ has only finitely many minimal points, and let $T \subset S^*$ denote the inverse image of the set of minimal points. Therefore it suffices to prove that
\[
\frac{|x|}{|y|} \le \max \Bigl\{ \frac{|x'|}{|y'|}\; \Bigm| \;
(x',y') \in T \,\Bigr\} \quad \text{for all} \quad (x,y) \in
S^*\,.
\]
We proceed by induction on $|x| + |y|$. If $(x,y) \in T$, then there is
nothing to do. Suppose that  $(x,y) \notin T$.  Then there exist $(x_1, y_1) \in T$ such that $(x,y) = (x_1 x_2,\, y_1
y_2)$ with  $(x_2, y_2) \in \mathcal F (A) \times \mathcal F (A)$. It follows that $(x_2, y_2) \in S^*$, and clearly we have   $|x_j| + |y_j| < |x| + |y|$ for $j \in \{1,2\}$. Then
\[
\frac{|x|}{|y|} = \frac{|x_1|+|x_2|}{|y_1|+|y_2|} < \max \Bigl\{
\frac{|x_1|}{|y_1|},\, \frac{|x_2|}{|y_2|}\Bigr\} \le \max \Bigl\{
\frac{|x'|}{|y'|} \; \Bigm| \; (x',y') \in T \Bigr\}
\]
by the induction hypothesis.
\end{proof}

A most interesting class of finitely generated commutative monoids are numerical monoids.
Their study was initiated by Frobenius in the 19th century and they are still a topic of much research due to their intrinsic relationship with a wide area of mathematics.
A monoid $H$ is said to be {\it numerical} if it is a submonoid of $(\N_0, +)$ such that the complement $\N_0 \setminus H$ is finite. Clearly, numerical monoids are reduced. Let $H$ be a numerical monoid with $H \ne \N_0$. Since $\N_0 \setminus H$ is finite, $H$ has a finite generating set and hence a smallest generating set. Thus Proposition \ref{2.8} implies that the smallest generating set is the set of atoms, and that the elasticity and the set of distances are both finite. Suppose that $\mathcal A (H) = \{n_1, \ldots, , n_t \}$ with $t \in \N$ and $n_1 < \ldots < n_t$. We encourage the reader to check that $\rho (H) = n_t/n_1$ and that $\min \Delta (H) = \gcd \{n_2-n_1, \ldots , n_t - n_{t-1} \}$ (compare with Proposition \ref{6.1}.2). These results were the starting points of detailed investigations of the arithmetic of numerical monoids initiated  by Chapman and Garc{\'i}a-S{\'a}nchez.

Clearly, there are natural connections between the arithmetical invariants of factorization theory and the presentations of a monoid.  This point has been emphasized by Garc{\'i}a-S{\'a}nchez and it
opened the way to an algorithmic approach towards the computational determination of arithmetical invariants. Many algorithms have been implemented in GAP (see the GAP Package \cite{numericalsgps} and  a  survey by Garc{\'i}a-S{\'a}nchez \cite{GS16a}).

\section{Commutative Krull Monoids} \label{3}

It was the observation of the mathematicians of the 19th century that a ring of integers in an algebraic number field need not be factorial (in other words, it need not satisfy the Fundamental Theorem of Arithmetic). This led to the development of ideals (every nonzero ideal in a ring of integers is a unique product of prime ideals whence the Fundamental Theorem of Arithmetic holds for ideals) and subsequently to the development of "divisor theories" from the elements to the ideals. A divisor theory is a divisibility preserving homomorphism to an object which fulfills the Fundamental Theorem of Arithmetic.  Semigroups allowing a divisor theory are now called Krull monoids.

Commutative Krull monoids can be studied with divisor theoretic and with ideal theoretic tools. We start with divisor theoretic concepts. Let $H$ and $D$ be commutative  monoids. A monoid homomorphism $\varphi \colon H \to D$ is said to be:
\begin{itemize}
\item a {\it divisor homomorphism} if $a, b \in H$ and $\varphi (a) \t \varphi (b)$ (in $D$) implies that $a \t b$ (in $H$);

\item {\it cofinal} if for    every $\alpha \in D$ there is an $a \in H$ such that $\alpha \t \varphi (a)$ (in $D$);

\item a {\it divisor theory} if $D$ is free abelian, $\varphi$ is a divisor homomorphism, and for every $\alpha \in D$ there are $a_1, \ldots, a_m \in H$ such that $\alpha = \gcd \big( \varphi (a_1), \ldots, \varphi (a_m) \big)$.
\end{itemize}
In particular, every divisor theory is a cofinal divisor homomorphism. Let $\varphi \colon H \to D$ be a cofinal divisor homomorphism. The group
\[
\mathcal C (\varphi) = \mathsf q (D) / \mathsf q \big( \varphi (H) \big)
\]
is called the {\it class group} of $\varphi$. For $a \in \mathsf q (D)$ we denote by $[a] = a \mathsf q \big( \varphi (H) \big) \in \mathcal C (\varphi)$ the class containing $a$.
We use additive notation for the class group and observe that $[1]$ is the zero element of the abelian group $\mathcal C ( \varphi)$. Divisor theories of a given monoid are unique up to isomorphism. If $H$ has a divisor theory, then there is a free abelian monoid $F = \mathcal F (P)$ such that the inclusion $\varphi \colon H_{\red} \hookrightarrow F$ is a divisor theory, and the class group
\[
\mathcal C (\varphi) = \mathcal C (H) = \mathsf q (F)/\mathsf q (H_{\red})
\]
is called the {\it (divisor) class group} of $H$ and $G_0 = \{ [p] \mid p \in P \} \subset \mathcal C (H)$ is the set of classes containing prime divisors. We continue with the most classical example of a cofinal divisor homomorphism and a divisor theory.

\begin{proposition} \label{3.1}
Let $R$ be a commutative  domain, $\mathcal I^* (R)$ the monoid of invertible ideals where the operation is the usual multiplication of ideals, and let $\varphi \colon R^{\bullet} \to \mathcal I^* (R)$ be the homomorphism mapping each element onto its  principal ideal.
\begin{enumerate}
\item The map $\varphi$ is a cofinal divisor homomorphism and $\mathcal C ( \varphi)$ is the Picard group $\Pic (R)$ of $R$.

\item If $R$ is a commutative Dedekind domain, then $\varphi$ is a divisor theory and $\mathcal C (\varphi)$ is the usual ideal class group of $R$.

\item If $R$ is the ring of integers of an algebraic number field, then $\mathcal C ( \varphi)$ is finite and every class contains infinitely many prime ideals.
\end{enumerate}
\end{proposition}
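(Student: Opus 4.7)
The plan for part~(1) is essentially to unfold definitions. For the divisor homomorphism property, observe that $\varphi(a) \t \varphi(b)$ in $\mathcal{I}^*(R)$ means $(b) \subset (a)$, which is equivalent to $a \t b$ in $R^{\bullet}$; cofinality follows from the invertibility of ideals in $\mathcal{I}^*(R)$, because if $I \in \mathcal{I}^*(R)$ with inverse fractional ideal $J$, then one can choose $c \in R^{\bullet}$ with $cJ \subset R$, so that $I$ divides the principal ideal $(c)$ inside $\mathcal{I}^*(R)$. To identify $\mathcal{C}(\varphi)$ with $\Pic(R)$, I would observe that $\mathsf q(\mathcal{I}^*(R))$ is by construction the group of invertible fractional ideals of $R$, while $\mathsf q(\varphi(R^{\bullet}))$ is the subgroup of nonzero principal fractional ideals, so the quotient coincides with $\Pic(R)$ by definition.

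Part~(2) rests on two standard facts about a Dedekind domain $R$: every nonzero ideal is invertible, so $\mathcal{I}^*(R)$ consists of all nonzero ideals, and every nonzero ideal has a unique factorization as a product of nonzero prime ideals. This exhibits $\mathcal{I}^*(R)$ as the free abelian monoid on the set of nonzero primes, so together with part~(1) it remains only to verify the greatest common divisor condition in the definition of a divisor theory. For each $\alpha = \prod_i P_i^{e_i} \in \mathcal{I}^*(R)$, the plan is to produce two principal ideals whose $\gcd$ (the valuationwise minimum) equals $\alpha$: by the Chinese Remainder Theorem, first choose $a \in R^{\bullet}$ with $\mathsf v_{P_i}((a)) = e_i$ for every $i$, giving a decomposition $(a) = \alpha \beta$ with $\beta$ coprime to $\alpha$; then again by CRT choose $c \in R^{\bullet}$ with $\mathsf v_{P_i}((c)) \ge e_i$ for every $i$ and $\mathsf v_Q((c)) = 0$ for every prime $Q$ dividing~$\beta$. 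Taking minima of valuations prime by prime then yields $\alpha = \gcd(\varphi(a),\varphi(c))$, and the identification of $\mathcal{C}(\varphi)$ with the usual ideal class group is inherited from part~(1).

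For part~(3), the two assertions are classical theorems of algebraic number theory rather than arguments internal to factorization theory, so in a survey of this flavor the natural approach is to cite them. Finiteness of the class group of a number ring follows from the Minkowski bound: every ideal class contains an integral ideal of norm at most an explicit constant depending on the discriminant, and there are only finitely many ideals below a given norm. The assertion that every class contains infinitely many prime ideals is a consequence of the Chebotarev density theorem applied to the Hilbert class field (equivalently, of the positive analytic density of primes in each ideal class, as in the classical theorem of Hecke generalizing Dirichlet). The main obstacle here is purely expository, namely how much number-theoretic background to recall \emph{in situ}; since the material is standard, the cleanest route is simply to reference it.
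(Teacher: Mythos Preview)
Your proposal is correct throughout, and in spirit matches the paper, but two steps take a different (slightly heavier) route than the paper's own argument.

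For cofinality in part~(1), you invoke the inverse fractional ideal $J$ and a denominator $c$ with $cJ \subset R$ to get $I \t (c)$. The paper does something more direct: since divisibility in $\mathcal I^*(R)$ is reverse containment, any nonzero $a \in I$ already gives $aR \subset I$, hence $I \t \varphi(a)$. Your argument is fine, just more machinery than needed.

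For the $\gcd$ condition in part~(2), your CRT-and-valuations construction works, but the paper exploits a sharper structural fact about Dedekind domains: every nonzero ideal $I$ is generated by two elements, say $I = aR + bR$, and then $I = aR + bR = \gcd(aR,bR) = \gcd(\varphi(a),\varphi(b))$ immediately. This is shorter and avoids any explicit valuation bookkeeping; your approach, on the other hand, is self-contained in the sense that it does not presuppose the two-generator theorem, so it trades brevity for a lower prerequisite threshold.

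Part~(3) is handled identically: the paper simply cites standard number-theoretic results, as you propose.
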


\begin{proof}
1. A short calculation shows that for two invertible ideals $I, J \triangleleft R$ we have $J \t I$ in $\mathcal I^* (R)$ if and only if $I \subset J$. To show that $\varphi$ is a divisor homomorphism, let $a, b \in R^{\bullet}$ be given and suppose that $bR \t aR$ in $\mathcal I^* (R)$. Then there is a $J \in \mathcal I^* (R)$ such that $(aR)J = bR$ whence $a^{-1}bR = J \subset R$ and $a \t b$ in $R^{\bullet}$. To show that $\varphi$ is cofinal, let $I \in \mathcal I^* (R)$ be given. If $a \in I$, then $aR \subset I$ and hence $I \t aR$ in $\mathcal I^* (R)$. The definition of  $\mathcal C (\varphi)$ coincides with the definition of $\Pic (R)$.

2. Suppose that $R$ is a commutative Dedekind domain. Then every nonzero ideal is invertible and a product of prime ideals. Thus $\mathcal I^* (R)$ is free abelian. Let $I \in \mathcal I^* (R)$. Then $I$ is generated by two elements $a, b \in R$, whence $
I = \langle a, b \rangle = aR + bR = \gcd (aR, bR) = \gcd ( \varphi (a), \varphi (b))$.
Therefore $\varphi$ is a divisor theory.

3. This can be found in many textbooks on algebraic number theory (see, for example,  \cite[Theorem 2.10.14]{Ge-HK06a} for a summary).
\end{proof}

The previous proposition shows that in case of commutative Dedekind domains the embedding in a monoid of ideals establishes a divisor theory. This holds true in much greater generality and in order to outline this we mention briefly some key notions on divisorial ideals (see \cite{HK98} for a thorough treatment of divisorial ideals).

Let $H$ be a commutative monoid and let $A, B \subset \mathsf q (H)$ be subsets.
We denote by
$(A \DP B) = \{ x \in \mathsf q (H) \mid x B \subset A \}$, by $A^{-1} = (H \DP A)$, and by $A_v = (A^{-1})^{-1}$.  By an ideal of $H$ we always mean an $s$-ideal (thus  $AH = A$ holds), and an $s$-ideal $A$ is a {\it divisorial ideal} (or a {\it $v$-ideal}) if $A_v = A$. We denote by $\mathcal F_v (H)$ the set of all fractional  divisorial ideals and by $\mathcal I_v (H)$ the set of all  divisorial ideals of $H$. Furthermore, $\mathcal I_v^* (H)$ is the monoid of $v$-invertible  divisorial ideals (with $v$-multiplication) and its quotient group $\mathcal F_v (H)^{\times} = \mathsf q \big( \mathcal I_v^* (H) \big)$ is the group of fractional invertible divisorial ideals.
By  $\mathfrak X (H)$, we denote the set of all minimal nonempty prime $s$-ideals of $H$ and
\[
\widehat H = \{ x \in \mathsf q (H) \mid \ \text{there is a} \ c \in H \ \text{such that} \ cx^n \in H \ \text{for all} \ n \in \N \}  \subset \mathsf q (H)
\]
is called the {\it complete integral closure} of $H$. We say that $H$ is {\it completely integrally closed} if $H = \widehat H$. Straightforward arguments show that every factorial monoid is completely integrally closed and that a noetherian commutative domain is completely integrally closed if and only if it is integrally closed.

\begin{theorem}[{\bf Commutative Krull monoids}] \label{3.2}~

Let $H$ be a commutative monoid. Then the following statements are equivalent.
\begin{enumerate}
\item[(a)] $H$ is completely integrally closed and satisfies the {\rm ACC} on divisorial ideals.

\item[(b)] The map $\varphi \colon H \to \mathcal I_v^* (H)$,  $a \mapsto aH$ for all $a\in H$,  is a divisor theory.

\item[(c)] $H$ has a divisor theory.

\item[(d)] There is a free abelian monoid $F$ such that the inclusion $H_{\red} \hookrightarrow F$ is a divisor homomorphism.
\end{enumerate}
If one of the equivalent statements holds, then $H$ is called a {\it Krull monoid}, and $\mathcal I_v^* (H)$ is free abelian with basis $\mathfrak X (H)$.
\end{theorem}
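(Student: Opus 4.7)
The plan is to establish the cycle (a) $\Rightarrow$ (b) $\Rightarrow$ (c) $\Rightarrow$ (d) $\Rightarrow$ (a), and to extract the structural claim that $\mathcal I_v^*(H)$ is free abelian with basis $\mathfrak X(H)$ from the argument for (a) $\Rightarrow$ (b). Two implications are essentially formal: (b) $\Rightarrow$ (c) literally exhibits a divisor theory, and (c) $\Rightarrow$ (d) holds because any divisor theory is, by definition, a divisor homomorphism into a free abelian monoid.

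For (d) $\Rightarrow$ (a), assume $\iota \colon H_{\red} \hookrightarrow F = \mathcal F(P)$ is a divisor homomorphism, equivalently that $H_{\red}$ is saturated in $F$. Since $F$ is factorial, hence completely integrally closed, I would transfer complete integral closure to $H$: given $x \in \mathsf q(H)$ and $c \in H$ with $cx^n \in H$ for all $n \in \N$, the corresponding element of $\mathsf q(F)$ has all its powers in a fixed fractional principal ideal of $F$, so complete integral closure of $F$ places $\iota(x) \in F$, and saturation then forces $x \in H_{\red}$. For the ACC on divisorial ideals, I would use the assignment $I \mapsto (IF)_v$, which sends a properly ascending chain of divisorial ideals of $H$ to a properly ascending chain of divisorial ideals in $F$; but divisorial ideals of the factorial monoid $F$ are principal and hence satisfy ACC.

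The main obstacle is (a) $\Rightarrow$ (b), where the basis claim must be proved simultaneously. The argument splits into three steps. First, using ACC on divisorial ideals, a Noetherian maximal-counterexample argument shows that every $v$-invertible divisorial ideal is a $v$-product of finitely many prime divisorial ideals, each of which is itself $v$-invertible by cancellation. Second, complete integral closure implies that every $v$-invertible prime divisorial ideal $\mathfrak p$ is minimal in $\mathfrak X(H)$: if some nonempty prime $s$-ideal $\mathfrak q$ strictly contained $\mathfrak p$, then using $v$-invertibility of $\mathfrak p$ together with a fixed element of $\mathfrak q$ one would construct an element of $\mathsf q(H)$ whose entire cyclic semigroup is confined to a principal fractional ideal of $H$, placing it in $\widehat H \setminus H$ and contradicting (a). Third, uniqueness of the factorization into minimal primes follows by $v$-cancellation. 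Together these give $\mathcal I_v^*(H) \cong \mathcal F(\mathfrak X(H))$, which is exactly the asserted structural claim.

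To close (a) $\Rightarrow$ (b), I would verify that $\varphi \colon H \to \mathcal I_v^*(H)$, $a \mapsto aH$, is a divisor theory. The divisor-homomorphism property is immediate: $aH \mid bH$ in $\mathcal I_v^*(H)$ means $bH \subset aH$, equivalently $a \mid b$ in $H$. The gcd condition follows from ACC, which makes every divisorial ideal $v$-finitely generated as $I = (a_1, \ldots, a_m)_v$ with $a_i \in H$; in the free abelian monoid $\mathcal I_v^*(H)$ this equality is precisely $I = \gcd(\varphi(a_1), \ldots, \varphi(a_m))$. The genuinely hard step is the middle one of the three above: extracting the free-abelian structure of $\mathcal I_v^*(H)$ from the interplay between complete integral closure and ACC on divisorial ideals is the conceptual heart of the theorem.
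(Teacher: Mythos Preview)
The paper does not supply its own proof of this theorem; immediately after the statement it writes ``For a proof of Theorem~3.2 we refer to \cite[Section 2.5]{Ge-HK06a}.'' Hence there is no in-paper argument to compare against, only the cited monograph.

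That said, your outline is essentially the standard route taken in that reference: the cycle (a) $\Rightarrow$ (b) $\Rightarrow$ (c) $\Rightarrow$ (d) $\Rightarrow$ (a), with the substantial work concentrated in (a) $\Rightarrow$ (b) and the structural claim $\mathcal I_v^*(H)\cong\mathcal F(\mathfrak X(H))$ emerging from the proof that $v$-invertible divisorial ideals factor uniquely into divisorial primes. Two places in your sketch would need to be tightened in a full write-up. First, in (d) $\Rightarrow$ (a), the assertion that $I\mapsto (IF)_v$ is strictly monotone on divisorial ideals relies on the lemma that for a saturated submonoid $H_{\red}\subset F$ one recovers $I$ as $(IF)_v\cap \mathsf q(H_{\red})$; this is standard but not automatic. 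Second, your Step~2 in (a) $\Rightarrow$ (b) (that $v$-invertible divisorial primes lie in $\mathfrak X(H)$) is the most delicate point, and your description of how to manufacture an element of $\widehat H\setminus H$ from a strictly smaller prime $\mathfrak q\subsetneq\mathfrak p$ is too vague as written: the naive attempt gives $a^n x^n\in H$ rather than $c\,x^n\in H$ for a fixed $c$, and one needs a sharper argument (in the reference this is handled via the characterization that in a $v$-noetherian monoid complete integral closure is equivalent to every nonempty divisorial ideal being $v$-invertible, together with a height-one/localization analysis). None of this is a wrong idea, but it is where the actual content lives.
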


For a proof of Theorem \ref{3.2} we refer to \cite[Section 2.5]{Ge-HK06a}. Note, since  $H$ is factorial if and only if $H_{\red}$ is free abelian, it follows that $H$ is factorial if and only if it is Krull with trivial class group.
In the remainder of this section we present a list of examples of commutative Krull monoids stemming from quite diverse mathematical areas.

\bigskip
\noindent
{\bf Commutative domains.} Let $R$ be a commutative domain and $H=R^{\bullet}$. Then the maps
\begin{equation} \label{monoid-isomorphisms}
      \iota^\bullet \colon
      \begin{cases}
      \mathcal F_v(R) &\to \  \mathcal F_v(H)\\
      \quad \mathfrak a &\mapsto \quad \mathfrak a \setminus \{0\}
      \end{cases}
      \qquad \text{and} \qquad \iota^\circ \colon
      \begin{cases}
      \mathcal F_v(H) &\to \  \mathcal F_v(R)\\
      \quad \mathfrak a &\mapsto \quad  \mathfrak a \cup \{0\}
      \end{cases}
\end{equation}
are inclusion preserving  isomorphisms which are inverse to each other. In particular, if $\mathfrak a$ is a divisorial semigroup theoretical ideal of $H$, then $\mathfrak a \cup \{0\}$ is a divisorial ring theoretical ideal of $R$. Thus $R$ satisfies the ACC on (ring theoretical) divisorial ideal of $R$ if and only if $H$ satisfies the ACC on (semigroup theoretical) divisorial ideals of $H$. Since, by definition, $R$ is completely integrally closed if and only $H$ is completely integrally closed, we obtain that $R$ is a commutative Krull domain if and only if $H$ is a commutative Krull monoid.

Property (a) in Theorem \ref{3.2} easily implies that noetherian integrally closed commutative domains are Krull. Furthermore, a commutative Krull domain is Dedekind if and only if it is at most one-dimensional. If $R$ is Dedekind, then every ideal is divisorial and $\mathcal I_v^* (R) = \mathcal I^* (R)$ (confer Theorem \ref{3.2}.(b) and Proposition \ref{3.1}.2).

\medskip
\noindent
{\bf Submonoids of commutative domains.} Let $R$ be a commutative Krull domain, $\{0\} \ne \mathfrak f \triangleleft R$ an ideal, and $\Gamma \subset (R/\mathfrak f)^{\times}$ a subgroup. Then the monoid
\[
H_{\Gamma} = \{a \in R^{\bullet} \mid a + \mathfrak f \in \Gamma\}
\]
is a Krull monoid, called the {\it (regular) congruence monoid} defined in $R$ modulo $\mathfrak f$ by $\Gamma$. We refer the reader to \cite[Section 2.11]{Ge-HK06a} for more on congruence monoids.

\medskip
\noindent
{\bf Monadic submonoids of rings of integer-valued polynomials.} Let us consider the classical ring of integer-valued polynomials over the integers. This is the ring
\[
\Int (\Z) = \{ f \in \Q [X] \mid f (\Z) \subset \Z \} \subset \Q[X] \,.
\]
We refer the reader to the Monthly article by Cahen and Chabert \cite{Ca-Ch16a} for a friendly introduction to integer-valued polynomials and to their monograph \cite{Ca-Ch97} for a deeper study. It is well-known that $\Int (\Z)$ is an integrally closed two-dimensional Pr\"ufer domain. It is a \BF-domain but it is not Krull. However, every divisor-closed submonoid of $\Int (\Z)$, which is generated by one element, is a Krull monoid \cite[Theorem 5.2]{Re14a}. We refer to recent work of Frisch and Reinhart  \cite{Re16a, Fr16a}.

\medskip
\noindent
{\bf Monoids of regular elements in commutative rings with zero-divisors.} By a commutative Krull ring we mean a completely integrally closed commutative ring which satisfies the ACC on regular divisorial ideals. The isomorphisms (as given in Equation (\ref{monoid-isomorphisms})) between  monoids of divisorial ideals carry over from the setting of commutative domains  to the setting of commutative rings with zero divisors. Thus, if a commutative ring $R$ is  Krull, then the monoid of cancelative (regular) elements is a Krull monoid, and the converse holds for $v$-Marot rings \cite[Theorem 3.5]{Ge-Ra-Re15c}.

\medskip
\noindent
{\bf Monoids of Modules.} Let $R$ be a ring and let $\mathcal C$ be a class of right $R$-modules which is closed under finite direct sums, direct summands, and isomorphisms. For a module $M$ in $\mathcal C$, let $[M]$ denote the isomorphism class of $M$. Let $\mathcal V (C)$ denote the set of isomorphism classes of modules in $\mathcal C$ (we assume here that $\mathcal V(C)$ is indeed a set.) Then $\mathcal V (\mathcal C)$ is a commutative semigroup with operation defined by $[M] + [N] = [M \oplus N]$ and all information about direct-sum decomposition of modules in $\mathcal C$ can be studied in terms of factorization of elements in the semigroup $\mathcal V (\mathcal C)$. In particular, the direct-sum decompositions in $\mathcal C$ are (essentially) unique (in other words, the Krull-Remak-Schmidt-Azumaya Theorem  holds) if and only if $\mathcal V (C)$ is a free abelian monoid. This semigroup-theoretical point of view was justified by Facchini \cite{Fa02} who showed that $\mathcal V (\mathcal C)$ is a reduced Krull monoid provided that the endomorphism ring $\End_R (M)$ is semilocal for all modules $M$ in $\mathcal C$. This result allows one to describe the direct-sum decomposition of modules in terms of factorization of elements in Krull monoids. We refer the reader to the Monthly article by Baeth and Wiegand \cite{Ba-Wi13a}.

\medskip
\noindent
{\bf Finitely generated monoids and  affine monoids.}
The {\it root closure} $\widetilde H$ of a commutative monoid $H$ is defined as
\[
\widetilde H = \{x \in \mathsf q (H) \mid x^n \in H \ \text{for some} \ n \in \N \} \subset \mathsf q (H) \,,
\]
and $H$ is said to be {\it root closed} (also the terms {\it normal}, {\it full}, and {\it integrally closed} are used) if $H = \widetilde H$. If $H$ is finitely generated, then $\widehat H = \widetilde H$. Since  finitely generated monoids satisfy the ACC on ideals, they are Krull if and only if they are root closed (see Theorem \ref{3.2}.(a)).

A monoid is called {\it affine} if it is a finitely generated submonoid of a finitely generated free abelian group. It is easy to check that  the concepts of {\it normal affine monoids} and of {\it reduced finitely generated commutative Krull monoids}  coincide (a variety of further characterizations are given in \cite[Theorem 2.7.14]{Ge-HK06a}).
(Normal) affine monoids play an important role in combinatorial commutative algebra.

\medskip
\noindent
{\bf Monoids of Zero-Sum Sequences.} Let $G$ be an additively written abelian group and $G_0 \subset G$ a subset. By a {\it sequence} over $G_0$, we mean a finite sequence of terms from $G_0$ where repetition is allowed and the order is disregarded. Clearly, the set of sequences forms a semigroup, with concatenation as its operation and with the empty sequence as its identity element. We consider sequences as elements of the free abelian monoid with basis $G_0$. This algebraic point of view has turned out to be quite convenient from a notational point of view. But there is much more which we  start to outline here and later in Proposition \ref{4.3}. Let
\[
S = g_1 \cdot \ldots \cdot g_{\ell} = \prod_{g \in G_0} g^{\mathsf v_g (S)} \in \mathcal F (G_0) \,,
\]
where $\ell \in \N_0$ and $g_1, \ldots , g_{\ell} \in G$. Then $|S|=\ell$ is the {\it length} of $S$, $\supp (S)= \{g_1, \ldots, g_{\ell} \}$ is the {\it support} of $S$, $-S= (-g_1) \cdot \ldots \cdot (-g_{\ell})$, and $\sigma (S)=g_1+ \ldots + g_{\ell}$ is the {\it sum} of $S$. We say that $S$ is a {\it zero-sum sequence} if $\sigma (S)=0$, and clearly the set
\[
\mathcal B (G_0) = \{ S \in \mathcal F (G_0) \mid \sigma (S)=0 \} \subset \mathcal F (G_0)
\]
of all zero-sum sequences is a submonoid, called the {\it monoid of zero-sum sequences} (also called Block Monoid) over $G_0$. Obviously, the inclusion $\mathcal B (G_0) \hookrightarrow \mathcal F (G_0)$ is a divisor homomorphism and hence $\mathcal B (G_0)$ is a reduced commutative Krull monoid by Theorem \ref{3.2}.(d). Monoids of zero-sum sequences form a powerful link between the theory of (general) Krull monoids and additive combinatorics \cite{Ge-Ru09, Gr13a}. Thus all methods from the later area are available for the study of sets of lengths in Krull monoids, and we will make heavily use of this in Section \ref{6}.

\begin{proposition} \label{3.3}
Let $G$ be an additive abelian group and $G_0 \subset G$ a subset.
\begin{enumerate}
\item If $G_0$ is finite, then $\mathcal B (G_0)$ is finitely generated.

\item The following statements are equivalent.
      \begin{enumerate}
       \item $|G| \le 2$.
      \item $\mathcal B (G)$ is factorial.
      \item $\mathcal B (G)$ is half-factorial.
      \end{enumerate}

\item If $|G| \ge 3$, then the inclusion $\mathcal B (G) \hookrightarrow \mathcal F (G)$ is a divisor theory with class group isomorphic to $G$ and every class contains precisely one prime divisor.

\item Let $G'$ be an abelian group. Then the monoids $\mathcal B (G)$ and $\mathcal B (G')$ are isomorphic if and only if the groups $G$ and $G'$ are isomorphic.
\end{enumerate}
\end{proposition}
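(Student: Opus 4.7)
My plan is to handle the four assertions in order. For (1), I invoke Dickson's Lemma: since $G_0$ is finite, $\mathcal F(G_0) \cong \N_0^{|G_0|}$ and every subset has only finitely many minimal elements. Applying this to $\mathcal B(G_0) \setminus \{1\}$ and using that the inclusion $\mathcal B(G_0) \hookrightarrow \mathcal F(G_0)$ is a divisor homomorphism, the minimal nonunit zero-sum sequences are exactly the atoms of $\mathcal B(G_0)$, so there are finitely many atoms. Since $\mathcal B(G_0)$ is a \BF-monoid by Lemma \ref{2.2}.3, it is atomic, and hence finitely generated.

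For (2), I would prove (a) $\Rightarrow$ (b) $\Rightarrow$ (c) $\Rightarrow$ (a). The first implication is by direct inspection: $|G|=1$ gives $\mathcal B(G) = \mathcal F(\{0\}) \cong \N_0$, while for $|G|=2$ with $G=\{0,g\}$ the only atoms are $0$ and $g^2$, so $\mathcal B(G) \cong \N_0^2$. The second implication is trivial. For (c) $\Rightarrow$ (a), take the contrapositive and assume $|G| \ge 3$. If some $g \in G$ has order $n \ge 3$, then $g \cdot (-g)$, $g^n$, and $(-g)^n$ are atoms and the equality $g^n \cdot (-g)^n = (g \cdot (-g))^n$ gives factorizations of lengths $2$ and $n$. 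Otherwise $G$ is an elementary abelian $2$-group, so $|G| \ge 4$; picking distinct nonzero $g_1,g_2$ and setting $g_3 = g_1+g_2 \ne 0$, the atoms $g_1 g_2 g_3$ and $g_i^2$ give $g_1^2 g_2^2 g_3^2 = (g_1 g_2 g_3)^2$ with factorizations of lengths $3$ and $2$.

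Part (3) is the technical heart and I expect its gcd verification to be the main obstacle. The inclusion is a divisor homomorphism by construction, so the work is to realize each single-term sequence $g \in G \subset \mathcal F(G)$ as a gcd of elements of $\mathcal B(G)$ and then to identify the class group. The case $g = 0$ is trivial since $0 \in \mathcal B(G)$. For $g \ne 0$, I take $a_1 = g \cdot (-g)$ and $a_2 = g \cdot h \cdot (-g-h)$ with $h \in G \setminus \{0,-g\}$, which exists because $|G| \ge 3$. A short case analysis (distinguishing whether $2g = 0$) shows that $a_1, a_2$ are zero-sum sequences whose only common support element is $g$ with minimum multiplicity one, so $\gcd(a_1,a_2) = g$ in $\mathcal F(G)$. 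For the class group, extend the sum map to a group homomorphism $\sigma \colon \mathsf q(\mathcal F(G)) \to G$. Given $x = \prod_{g \in G} g^{n_g} \in \ker \sigma$, write $x = a/b$ with $a = \prod_{n_g > 0} g^{n_g}$ and $b = \prod_{n_g < 0} g^{-n_g}$, set $s = \sigma(a) = \sigma(b) \in G$, and observe that $a \cdot (-s)$ and $b \cdot (-s)$ lie in $\mathcal B(G)$ and realize $x$ as their quotient; this proves $\ker \sigma = \mathsf q(\mathcal B(G))$. The induced isomorphism $\mathcal C(\mathcal B(G)) \cong G$ sends the class of the prime divisor $g$ to $g \in G$, which is a bijection between classes and primes of $\mathcal F(G)$, giving the last assertion.

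For (4), the implication $\Leftarrow$ is immediate. For $\Rightarrow$, I split on cardinalities. If $|G|, |G'| \ge 3$, then (3) identifies both class groups with $G$ and $G'$; since divisor theories are unique up to isomorphism the class group is a monoid-isomorphism invariant, so $G \cong G'$. If $|G| \le 2$, then (2) makes $\mathcal B(G)$ factorial, hence $\mathcal B(G')$ is factorial too, hence $|G'| \le 2$. One then distinguishes $|G| \in \{1,2\}$ by the (isomorphism-invariant) number of atoms of $\mathcal B(G)$, which equals $|G|$; uniqueness of groups of orders $1$ and $2$ then completes the proof.
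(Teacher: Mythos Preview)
Your argument is correct and in parts 1--3 it matches the paper's proof closely; the only cosmetic difference is that in part 3 you use the single pair $g(-g)$ and $gh(-g-h)$ for the gcd representation of every nonzero $g$, whereas the paper splits on $\ord(g)$ and uses $g^n$ together with $g(-g)$ when $\ord(g)\ge 3$. In part 4 there is a genuine, if small, divergence: the paper deduces the result from the external structural fact that a reduced commutative Krull monoid is determined up to isomorphism by its class group together with the distribution of prime divisors in the classes (\cite[Theorem 2.5.4]{Ge-HK06a}), while you argue more elementarily---for $|G|,|G'|\ge 3$ you use only the uniqueness of divisor theories to make the class group a monoid invariant, and for $|G|\le 2$ you distinguish the two cases by counting atoms. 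Both approaches are valid; yours is more self-contained and avoids the outside reference, while the paper's cited theorem is a stronger two-way statement that would also reconstruct the monoid from the invariants.
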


\begin{proof}
1. The map $f \colon \N_0^{G_0} \to \mathcal F (G_0)$, defined by $\boldsymbol m = (m_g)_{g \in G_0} \mapsto \prod_{g \in G_0} g^{m_g}$, is a monoid isomorphism. The embedding $\Gamma := f^{-1} ( \mathcal B (G_0) ) \hookrightarrow \N_0^{G_0}$ is a divisor homomorphism (i.e, $\boldsymbol m, \boldsymbol n \in \Gamma$ and $\boldsymbol m \le \boldsymbol n$ implies that $\boldsymbol n - \boldsymbol m \in \Gamma$). By Dickson's Lemma \cite[Theorem 1.5.3]{Ge-HK06a}, $\Gamma$ is generated by the finite set of minimal points $\Min (\Gamma)$, and hence $\mathcal B (G_0)$ is generated by $f ( \Min (\Gamma))$.

2. If $G=\{0\}$, then $\mathcal B (G) = \mathcal F (G) \cong (\N_0,+)$ is free abelian. If $G = \{0, g\}$, then $\mathcal B (G)$ is free abelian with basis $\mathcal A (G) = \{0, g^2\}$. Thus (a) $\Rightarrow$ (b), and obviously (b) $\Rightarrow$ (c). In order to verify that (c) $\Rightarrow$ (a), it suffices to show that $|G| \ge 3$ implies that $\mathcal B (G)$ is not half-factorial. Suppose that $|G| \ge 3$. If there is some element $g \in G$ with $\ord (g)=n \ge 3$, then $U=g^n$, $-U$, and $V=(-g)g$ are atoms of $\mathcal B (G)$ and $(-U)U = V^n$ shows that $\mathcal B (G)$ is not half-factorial. If there are two distinct elements $e_1, e_2 \in G$ of order two, then $U=e_1e_2(e_1+e_2)$, $V_0 = (e_1+e_2)^2$, $V_1=e_1^2$, and $V_2=e_2^2$ are atoms of $\mathcal B (G)$ and $U^2=V_0V_1V_2$ shows that $\mathcal B (G)$ is not half-factorial.

3. Let $|G| \ge 3$. Clearly, the inclusion is a cofinal divisor homomorphism. To show that it is a divisor theory, let $g \in G \setminus \{0\}$ be given. If $\ord (g)=n\ge 3$, then $g = \gcd \big( g^n, g(-g) \big)$. If $\ord (g)=2$, then there is an element $h \in G \setminus \{0,g  \}$, and we obtain that $g = \gcd \big( g^{2},  g h (-g-h) \big)$. It is easy to check that the map
\[
\Phi \colon \mathcal C \big( \mathcal B (G) \big) = \mathsf q \big( \mathcal F (G) \big)/\mathsf q \big( \mathcal B (G) \big) = \{ [S]= S \mathsf q \big( \mathcal B (G) \big) \mid S \in \mathcal F (G) \} \ \to \ G \,,
\]
defined by $\Phi ( [S] ) = \sigma (S)$ is a group isomorphism. Since for every $S \in \mathcal F (G)$, $[S] \cap G = \{\sigma (S) \}$, every class of $\mathcal C \big( \mathcal B (G) \big)$ contains precisely one prime divisor.

4. This follows from 2.,3., and the fact that a reduced commutative Krull monoid is uniquely determined by its class group and the distribution of prime divisors in its classes (\cite[Theorem 2.5.4]{Ge-HK06a}).
\end{proof}

\section{Transfer Homomorphisms and Transfer Krull Monoids} \label{4}

A central method to study the arithmetic of a given class of monoids $H$ is to construct simpler auxiliary monoids $B$ (and such constructions are often based on the ideal theory of $H$) and homomorphisms $\theta \colon H \to B$ (called transfer homomorphisms) which allow us to pull back arithmetical results from $B$ to $H$.
The concept of  transfer homomorphisms was introduced by Halter-Koch in the commutative setting \cite{HK97a}) and  recently  generalized to the noncommutative setting  (\cite[Definition 2.1]{Ba-Sm15}).

\begin{definition} \label{4.1}
Let $H$ and $B$ be atomic monoids. A monoid homomorphism $\theta \colon H \to B$ is called a {\it weak transfer homomorphism} if it has the following two properties.
\begin{itemize}
\item[{\bf (T1)}] $B = B^{\times} \theta (H) B^{\times}$ and $\theta^{-1} (B^{\times})=H^{\times}$.
\item[{\bf (WT2)}] If $a \in H$, $n \in \N$, $v_1, \ldots, v_n \in \mathcal A (B)$ and $\theta (a) = v_1 \cdot \ldots \cdot v_n$, then there exist $u_1, \ldots, u_n \in \mathcal A (H)$ and a permutation $\tau \in \mathfrak S_n$ such that $a = u_1 \cdot \ldots \cdot u_n$ and $\theta (u_i) \in B^{\times} v_{\tau (i)} B^{\times}$ for each $i \in [1,n]$.
\end{itemize}
\end{definition}

Property {\bf (T1)} says that $\theta$ is surjective up to units and that only units are mapped onto units. Property {\bf (WT2)} says that factorizations can be lifted up to units and up to  order. We do not discuss equivalent formulations or variants of the definition and we do  not give the definition of a {\it transfer homomorphism}, but note that the two concepts  coincide if $H$ and $T$ are both commutative.

\begin{lemma} \label{4.2}
Let $H$ and $B$ be atomic monoids, and let $\theta \colon H \to B$ be a weak transfer homomorphism.
\begin{enumerate}
\item For every $a \in H$, we have $\mathsf L_H (a) = \mathsf L_B \big( \theta (a) \big)$. In particular, an element $a \in H$ is an atom in $H$ if and only if $\theta (a)$ is an atom in $B$.

\item $\mathcal L (H) = \mathcal L (B)$. In particular, $\Delta (H) = \Delta (B)$, $\mathcal U_k (H) = \mathcal U_k (B)$, and $\rho_k (H) = \rho_k (B)$ for every $k \in \N$.
\end{enumerate}
\end{lemma}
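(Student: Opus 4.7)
The plan is to derive part 2 as a formal consequence of part 1 combined with property \textbf{(T1)}, so the bulk of the work lies in part 1. For part 1, the inclusion $\mathsf L_B(\theta(a)) \subset \mathsf L_H(a)$ is immediate from the definitions: given any factorization $\theta(a) = v_1 \cdot \ldots \cdot v_n$ into atoms of $B$, property \textbf{(WT2)} produces $u_1, \ldots, u_n \in \mathcal A(H)$ with $a = u_1 \cdot \ldots \cdot u_n$, so $n \in \mathsf L_H(a)$.

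For the reverse inclusion $\mathsf L_H(a) \subset \mathsf L_B(\theta(a))$ the key step is to show that $\theta$ sends atoms to atoms; this is where the main obstacle lies. Let $u \in \mathcal A(H)$. By \textbf{(T1)} we have $\theta(u) \notin B^{\times}$, and since $B$ is atomic we may write $\theta(u) = v_1 \cdot \ldots \cdot v_n$ with each $v_i \in \mathcal A(B)$. Applying \textbf{(WT2)} to this equation produces atoms $u_1, \ldots, u_n$ of $H$ with $u = u_1 \cdot \ldots \cdot u_n$; since $u$ is an atom and no $u_i$ is a unit, cancellativity in $H$ forces $n = 1$, so $\theta(u) \in \mathcal A(B)$. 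Consequently, any factorization $a = u_1 \cdot \ldots \cdot u_k$ into atoms of $H$ pushes forward to $\theta(a) = \theta(u_1) \cdot \ldots \cdot \theta(u_k)$ into atoms of $B$, giving $k \in \mathsf L_B(\theta(a))$. The equivalence between $u$ being an atom of $H$ and $\theta(u)$ being an atom of $B$ is then just the case $k = 1$ of the length equality, using the characterization that $\mathsf L(a) = \{1\}$ exactly when $a \in \mathcal A(H)$ already noted in the text.

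For part 2, the inclusion $\mathcal L(H) \subset \mathcal L(B)$ is immediate from part 1 applied to each $a \in H$. For the reverse, given $b \in B$ I would use \textbf{(T1)} to write $b = \varepsilon \theta(h) \varepsilon'$ with $\varepsilon, \varepsilon' \in B^{\times}$ and $h \in H$; observing that left or right multiplication by a unit is a bijection sending atoms to atoms—and hence preserves sets of lengths—one concludes $\mathsf L_B(b) = \mathsf L_B(\theta(h))$, which equals $\mathsf L_H(h) \in \mathcal L(H)$ by part 1. The equalities of $\Delta$, $\mathcal U_k$, and $\rho_k$ then follow at once, since each of these invariants is defined purely in terms of the system of sets of lengths.
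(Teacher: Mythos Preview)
Your proof is correct and follows essentially the same approach as the paper's own proof: both directions of part~1 are handled exactly as in the paper (using \textbf{(WT2)} to lift factorizations, and using \textbf{(T1)} plus atomicity of $B$ plus another application of \textbf{(WT2)} to show that atoms map to atoms). Your treatment of part~2 is in fact more careful than the paper's, which simply says ``2.\ follows directly from 1.'': you correctly observe that the inclusion $\mathcal L(B) \subset \mathcal L(H)$ requires the surjectivity-up-to-units from \textbf{(T1)} together with the invariance of sets of lengths under multiplication by units.
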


\begin{proof}
Since 2. follows directly from 1., we prove 1. Let $a \in H$. If $n \in \mathsf L_B \big( \theta (a) \big)$, then $\theta (a) = v_1 \cdot \ldots \cdot v_n$ with $v_1, \ldots, v_n \in \mathcal A (B)$, and thus {\bf (WT2)} implies that $n \in \mathsf L_H (a)$. Conversely, let $n \in \mathsf L_H (a)$. Then there are $u_1, \ldots, u_n \in \mathcal A (H)$ such that $a = u_1 \cdot \ldots \cdot u_n$. Thus $\theta (a) = \theta (u_1) \cdot \ldots \cdot \theta (u_n)$, and we have to verify that $\theta (u_1), \ldots,  \theta (u_n) \in \mathcal A (B)$. Let $i \in [1,n]$. Property {\bf (T1)} implies that $\theta (u_i)$ is not a unit. Since $B$ is atomic, there are $m \in \N$ and $w_1, \ldots, w_m \in \mathcal A (B)$ such that $\theta (u_i) = w_1 \cdot \ldots \cdot w_m$. Since this factorization can be lifted and $u_i$ is an atom, it follows that $m=1$ and that $\theta (u_i)=w_1 \in \mathcal A (B)$.
Since an element of an atomic monoid is an atom if and only if its set of lengths equals $\{1\}$, the  statement follows.
\end{proof}

Next we discuss the most classic example of a transfer homomorphism and its application. This is the homomorphism from a commutative Krull monoid to an associated monoid of zero-sum sequences. If $H$ is a commutative  monoid, then $H$ is Krull if and only if  $H_{\red}$ is  Krull, and if this holds, then  the canonical epimorphism $\pi \colon H \to H_{\red}$ is a transfer homomorphism. Thus, in the following proposition we may restrict to reduced Krull monoids for technical simplicity, but without loss of generality.

\begin{proposition} \label{4.3}
Let $H$ be a reduced commutative Krull monoid, $F = \mathcal F (P)$ a free abelian monoid such that the embedding $H \hookrightarrow F$ is a cofinal divisor homomorphism with class group  $G$, and let $G_0 = \{[p] \mid p \in P \} \subset G = \mathsf q (F)/\mathsf q (H)$ denote the set of classes containing prime divisors. Then there is a transfer homomorphism $\boldsymbol \beta \colon H \to \mathcal B (G_0)$. In particular, we have $\mathcal L (H) = \mathcal L \big( \mathcal B (G_0) \big)$.
\end{proposition}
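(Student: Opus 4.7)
The plan is to define $\boldsymbol \beta \colon H \to \mathcal B(G_0)$ by sending $a = \prod_{p \in P} p^{\mathsf v_p(a)} \in H \subset F$ to $\boldsymbol \beta(a) = \prod_{p \in P} [p]^{\mathsf v_p(a)} \in \mathcal F(G_0)$. Since $a \in H$ forces $[a] = 0$ in $G$, this image is a zero-sum sequence, so $\boldsymbol \beta(a) \in \mathcal B(G_0)$; additivity of the exponents $\mathsf v_p$ then makes $\boldsymbol \beta$ a monoid homomorphism. Both $H$ and $\mathcal B(G_0)$ embed in free abelian monoids, so Lemma \ref{2.2} makes them \BF-monoids and in particular atomic, and Definition \ref{4.1} applies in the commutative form.

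The first step I would establish is the characterization $H = \{a \in F \mid [a] = 0\}$, which relies only on $H \hookrightarrow F$ being a divisor homomorphism. Indeed, if $a \in F$ with $[a] = 0$, then $a = b c^{-1}$ in $\mathsf q(H)$ for some $b,c \in H$; then $c \mid ac = b$ in $F$ implies $c \mid b$ in $H$ by the divisor-homomorphism property, so cancellation in $F$ yields $a \in H$. This characterization supplies both halves of (T1) at once: any zero-sum sequence $g_1 \cdots g_\ell$ over $G_0$ is lifted by choosing $p_i \in P$ with $[p_i] = g_i$, since $a = p_1 \cdots p_\ell$ then satisfies $[a] = 0$ and so lies in $H$ with $\boldsymbol \beta(a) = g_1 \cdots g_\ell$; and the kernel condition reduces to $\boldsymbol \beta(a) = 1 \Rightarrow a = 1$, which is clear because $H$ is reduced and $\mathcal B(G_0)^\times = \{1\}$.

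The main step, and the main technical point, is the lifting property (WT2). I would proceed by induction on $n$. Given $a \in H$ and a factorization $\boldsymbol \beta(a) = B_1 \cdot B_2 \cdots B_n$ into atoms of $\mathcal B(G_0)$, the fact that $B_1$ divides $\boldsymbol \beta(a)$ in $\mathcal F(G_0)$ means $\mathsf v_g(B_1) \le \sum_{p : [p] = g} \mathsf v_p(a)$ for every $g \in G_0$. Hence I can select, for each $g$, a multiset of $\mathsf v_g(B_1)$ primes in $P$ of class $g$ appearing in $a$ with sufficient multiplicity, and let $u_1 \in F$ be their product. By construction $u_1 \mid a$ in $F$ and $\boldsymbol \beta(u_1) = B_1$; the previous characterization forces $u_1 \in H$, and likewise $a u_1^{-1} \in H$ since $\boldsymbol \beta(a u_1^{-1}) = B_2 \cdots B_n$ is a zero-sum sequence. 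Applying the induction hypothesis to $a u_1^{-1}$ yields the desired factorization $a = u_1 u_2 \cdots u_n$ in $H$. That each $u_i$ is an atom of $H$ follows from atomicity of $B_i$: any decomposition $u_i = v w$ with $v, w \in H \setminus \{1\}$ would yield $B_i = \boldsymbol \beta(v) \boldsymbol \beta(w)$ with both factors nontrivial elements of $\mathcal B(G_0)$, contradicting the irreducibility of $B_i$. The concluding assertion $\mathcal L(H) = \mathcal L(\mathcal B(G_0))$ is then immediate from Lemma \ref{4.2}.
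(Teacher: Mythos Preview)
Your proof is correct and follows essentially the same approach as the paper's. The paper likewise extends $\boldsymbol\beta$ to a map $\widetilde{\boldsymbol\beta}\colon F\to\mathcal F(G_0)$, proves the characterization $a\in H \Leftrightarrow \widetilde{\boldsymbol\beta}(a)\in\mathcal B(G_0)$ (your $H=\{a\in F\mid [a]=0\}$), verifies {\bf(T1)} from it, and establishes {\bf(WT2)} by first proving a two-factor lifting lemma and then inducting---exactly what your argument does in one pass.
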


\begin{proof}
Let $\widetilde{\boldsymbol \beta} \colon F \to \mathcal F (G_0)$ be the unique epimorphism defined by $\widetilde{\boldsymbol \beta} (p) = [p]$ for all $p \in P$. We start with the following assertion.

\smallskip
\begin{enumerate}
\item[{\bf A1.}\,] For every $a \in F$, we have $\widetilde{\boldsymbol \beta} (a)  \in \mathcal B (G_0)$ if and only if $a \in H$. Thus $\widetilde{\boldsymbol \beta} (H) = \mathcal B (G_0)$ and $\widetilde{\boldsymbol \beta}^{-1}  \big( \mathcal B (G_0) \big) = H$.
\end{enumerate}

\noindent
{\it Proof of} \,{\bf A1}.\,
Let $a = p_1 \cdot \ldots \cdot p_{\ell} \in F$ where $\ell \in \N_0$ and $p_1, \ldots, p_{\ell} \in P$. Then
\[
\widetilde{\boldsymbol \beta} (a) = [p_1]\cdot \ldots \cdot [p_{\ell}] \in \mathcal F (G_0) \quad \text{and} \quad \sigma \big( [p_1]\cdot \ldots \cdot [p_{\ell}] \big) = [p_1] + \ldots + [p_{\ell}] = [a] \,.
\]
Since $H \hookrightarrow F$ is a divisor homomorphism, we have $[a]=0 \in G$ if and only if $a \in H$. Therefore all assertions follow and we have proved {\bf A1}.

Therefore we can define the homomorphism $\boldsymbol \beta = \widetilde{\boldsymbol \beta}|H \colon H \to \mathcal B (G_0)$, and we assert that it is a transfer homomorphism. Clearly, $H$ and $\mathcal B (G_0)$ are reduced and  $\boldsymbol \beta$ is surjective. Thus {\bf (T1)} reads as
\[
\mathcal B (G_0) = \boldsymbol \beta (H) \quad \text{and} \quad {\boldsymbol \beta}^{-1} (\{1\}) = \{1\} \,,
\]
which holds true by {\bf A1}. We continue with the following assertion.
\smallskip
\begin{enumerate}
\item[{\bf A2.}\,] If $a \in H$, $B,C \in \mathcal B (G_0)$ and $\boldsymbol \beta (a)=BC$, then there exist $b,c \in H$ such that $a=bc$, $\boldsymbol \beta (b) = B$, and $\boldsymbol \beta (c) = C$.
\end{enumerate}

\noindent
{\it Proof of} \,{\bf A2}.\, Let  $a = p_1
\cdot \ldots \cdot p_{\ell} \in H$, where  $\ell \in \N_0$  and  $p_1,
\ldots, p_{\ell} \in P$, and suppose that $\boldsymbol \beta (a) = BC$,
say $B = [p_1] \cdot \ldots \cdot [p_k]$ and $C = [p_{k+1}] \cdot
\ldots \cdot [p_{\ell}]$ for some $k \in [0, \ell]$. By {\bf A1}, we infer that $b = p_1 \cdot \ldots \cdot p_k \in H$,
$c = p_{k+1} \cdot \ldots \cdot p_{\ell} \in H$ and clearly we have $a =
bc$. This completes the proof of {\bf A2}.

Clearly, {\bf (WT2)} follows from {\bf A2} by a straightforward induction, and hence $\boldsymbol \beta$ is a transfer homomorphism. Then Lemma \ref{4.2} implies that $\mathcal L (H) = \mathcal L \big( \mathcal B (G_0) \big)$.
\end{proof}

\begin{definition} \label{4.4}
A monoid $H$ is said to be a  {\it transfer Krull monoid} (over $G_0$) if there exists a weak transfer homomorphism $\theta \colon H \to \mathcal B (G_0)$ for a subset $G_0$ of an abelian group $G$. If $G_0$ is finite, then we say that $H$ is a {\it transfer Krull monoid of finite type}.
\end{definition}

By Proposition \ref{4.3}, every commutative Krull monoid is a transfer Krull monoid. If a monoid $H^*$ has a weak transfer homomorphism to a commutative Krull monoid, say $\theta \colon H^* \to H$, then the composition $\boldsymbol \beta \circ \theta \colon H^* \to \mathcal B (G_0)$ is a weak transfer homomorphism (with the notation of Proposition \ref{4.3}) and hence $H^*$ is a transfer Krull monoid. Thus a monoid is a transfer Krull monoid if and only if it allows a weak transfer homomorphism to a commutative Krull monoid.

Since monoids of zero-sum sequences are BF-monoids (this can be checked directly or by using Lemma \ref{2.2}), Lemma \ref{4.2} shows that transfer Krull monoids are BF-monoids. However, the examples given below reveal that transfer Krull monoids need neither be commutative nor completely integrally closed nor Mori (i.e., they do not necessarily satisfy the ACC on divisorial ideals).
Before we provide a list of transfer Krull monoids, we briefly discuss general, not necessarily commutative Krull monoids (for details we refer to \cite{Ge13a}). This concept was introduced by Wauters in 1984 in complete analogy to the ideal theoretic definition of commutative Krull monoids (compare with Theorem \ref{3.2}.(a)).

Suppose that $H$ is a monoid such that $aH \cap bH \ne \emptyset$ and $Ha \cap Hb \ne \emptyset$ for all $a,b \in H$. Then $H$ is called a {\it Krull monoid} (or a {\it Krull order}) if it is completely integrally closed  and satisfies the ACC on two-sided divisorial ideals.  The isomorphisms  in Equation (\ref{monoid-isomorphisms}) between monoids of divisorial ideals carry over from the setting of commutative domains to the setting of prime Goldie rings. Thus,
in analogy to the commutative setting, we have that a prime Goldie ring is a Krull ring if and only if its monoid of cancelative elements is a Krull monoid \cite[Proposition 5.1]{Ge13a}. Moreover, Krull monoids play a central role in the study of noetherian semigroup algebras. We refer  to \cite{C-F-G-O16} for  recent surveys on non-commutative Krull rings and monoids.

\begin{examples} \label{4.6}
{\bf 1.} As outlined above,  commutative Krull monoids (hence all the examples given is Section \ref{3})  are transfer Krull monoids. But more generally,  every normalizing Krull monoid is a transfer Krull monoid by \cite[Theorems 4.13 and 6.5]{Ge13a} (a monoid $H$ is said to be normalizing if $aH = Ha$ for all $a \in H$).

{\bf 2.} Let $H$ be a half-factorial monoid. Since the map $\theta \colon H \to \mathcal B ( \{0\})$, defined by $\theta (\epsilon)=1$ for all $\epsilon \in H^{\times}$ and $\theta (u)=0$ for every $u \in \mathcal A (H)$, is a transfer homomorphism, $H$ is a transfer Krull monoid (over the trivial group $\{0\}$). Only recently M. Roitman showed that commutative half-factorial domains need not be Mori \cite{Ro17a}. Thus,  transfer Krull monoids  satisfy the ACC on principal left ideals and on principal right ideals  (since they are BF-monoids; see Lemma \ref{2.2}) but they do not necessarily satisfy the ACC on divisorial ideals.

{\bf 3.} Let $\mathcal O$ be the ring of integers of an algebraic number field $K$, $A$ a central simple algebra over $K$, and $R$ a classical maximal $\mathcal O$-order of $A$. Then $R^{\bullet}$ is a Krull monoid. If  every stably free left $R$-ideal is free, then $R^{\bullet}$ is a transfer Krull monoid over a ray class group of $\mathcal O$ (note that this group is finite). If there is a stably free left $R$-ideal that is not free, then $R^{\bullet}$ is not a transfer Krull monoid. This is due to Smertnig \cite[Theorem 1.1 and 1.2]{Sm13a}, and for related results in a more general setting we refer to \cite{Ba-Sm15}.

{\bf 4.} Let $R$ be an order in an algebraic number field $K$,  $\overline R$  the integral closure of $R$ (thus $\overline R$ is the ring of integers of $K$), and let $\pi \colon \spec (\overline R) \to \spec (R)$ be the natural map defined by $\pi ( \mathfrak P)=\mathfrak P \cap R$ for all nonzero prime ideals $\mathfrak P \triangleleft \overline R$.

4.(a) If $R$ is seminormal, $\pi$ is bijective,  and there is an isomorphism $\overline \delta \colon \Pic (R) \to \Pic (\overline R)$, then $R^{\bullet}$ is a transfer Krull monoid over $\Pic (R)$ (\cite[Theorem 5.8]{Ge-Ka-Re15a}).

4.(b) Suppose that $\pi$ is not bijective. Since $\rho (R^{\bullet})=\infty$ by \cite[Corollary 3.7.2]{Ge-HK06a}, $R^{\bullet}$ is not a transfer Krull monoid of finite type by Theorem \ref{4.5}. Moreover, $R^{\bullet}$ is not a transfer Krull monoid over an infinite abelian group $G$ (compare Theorems \ref{4.5} and \ref{5.5}).

{\bf 5.} Let $D$ be a commutative Krull domain, $R \subset D$ a subring having the same quotient field such that $D=RD^{\times}$, $D^{\times} \cap R = R^{\times}$, and $(R \DP D)=\mathfrak m \in \max (R)$. Then the inclusion $R^{\bullet} \hookrightarrow D^{\bullet}$ is a transfer homomorphism and hence $R^{\bullet}$ is a transfer Krull monoid \cite[Proposition 3.7.5]{Ge-HK06a}. Note that $K+M$-domains satisfy the above assumptions. Indeed, let $R \subsetneq D$ be commutative  domains, $\mathfrak m$ a nonzero maximal ideal of $D$, and let $K \subsetneq L \subsetneq D$ be subfields such that $D=L+\mathfrak m$ and $R=K+\mathfrak m$. If $D$ is Krull, then the above assumptions are satisfied.

{\bf 6.} Let $R$ be a bounded HNP (hereditary noetherian prime) ring, and note that a commutative domain is an HNP ring if and only if it is a Dedekind domain. If every stably free left $R$-ideal is free, then $R^{\bullet}$ is a transfer Krull monoid \cite[Theorem 4.4]{Sm16b}.

{\bf 7.} In \cite{Ba-Ge-Gr-Sm15}, the authors study monoids of modules over HNP rings and thereby monoids of the following type occur. Let $H_0$ be a commutative Krull monoid but not a group,  $D$ be a commutative monoid with $D\ne \{1_D\}$, and define $H = (H_0 \setminus H_0^{\times})\times D \cup H_0^{\times} \times \{1_D\}$. Then $H$ is a transfer Krull monoid which is not completely integrally closed \cite[Proposition 6.1]{Ba-Ge-Gr-Sm15}.

{\bf 8.} In \cite{Ba-Ba-Go14}, the authors study conditions under which monoids of upper triangular matrices over commutative domains allow weak transfer homomorphisms to the underlying domain. Thus, in case of commutative Krull domains we obtain transfer Krull monoids.  Smertnig established  characterizations on the existence of transfer homomorphisms from full matrix rings over commutative noetherian rings with no nonzero nilpotent elements to commutative Krull domains \cite[Theorem 5.18]{Sm16a}.
\end{examples}

Sets of lengths in transfer Krull monoids (hence in all above examples) can be studied successfully with the strategy using transfer homomorphisms. Indeed combining Lemma \ref{4.2} and Proposition \ref{3.3} we are able to apply the structural results for finitely generated monoids (derived in Section \ref{2}) to transfer Krull monoids. This is done in Theorem \ref{4.5} whose proof follows  from Propositions \ref{2.8},  \ref{3.3}, and from Lemma \ref{4.2}.

\begin{theorem} \label{4.5}
Let $H$ be a transfer Krull monoid of finite type. Then the set of distances $\Delta (H)$ is finite, the elasticity $\rho (H)$ is finite, the unions $\mathcal U_k (H)$ of sets of lengths are finite for all $k \in \N$,  and they satisfy the  Structure Theorem for Unions of Sets of Lengths, as given in Theorem \ref{2.6}.
\end{theorem}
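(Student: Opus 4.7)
The plan is to reduce everything to the monoid of zero-sum sequences, where the assumptions of the Structure Theorem are already at hand. By Definition \ref{4.4}, since $H$ is a transfer Krull monoid of finite type, there is a finite subset $G_0$ of an abelian group $G$ together with a weak transfer homomorphism $\theta \colon H \to \mathcal{B}(G_0)$. Note that $\mathcal{B}(G_0)$ is a reduced commutative monoid (being a submonoid of the free abelian monoid $\mathcal{F}(G_0)$), and since $G_0$ is finite, Proposition \ref{3.3}.1 asserts that $\mathcal{B}(G_0)$ is finitely generated.

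The first key step is to apply Proposition \ref{2.8}.3 to the finitely generated reduced commutative monoid $\mathcal{B}(G_0)$. This gives that $\mathcal{B}(G_0)$ is a \BF-monoid with finite set of distances $\Delta(\mathcal{B}(G_0))$, finite elasticity $\rho(\mathcal{B}(G_0))$, and a constant $M \in \N$ such that $\rho_k(\mathcal{B}(G_0)) - \rho_{k-1}(\mathcal{B}(G_0)) \le M$ for all $k \ge 2$. In particular, all the hypotheses of Theorem \ref{2.6} are satisfied for $\mathcal{B}(G_0)$, and every $\mathcal{U}_k(\mathcal{B}(G_0))$ is finite (since $\rho_k(\mathcal{B}(G_0)) < \infty$).

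The second key step is to transfer these properties back to $H$ via Lemma \ref{4.2}.2, which yields $\mathcal{L}(H) = \mathcal{L}(\mathcal{B}(G_0))$ and consequently $\Delta(H) = \Delta(\mathcal{B}(G_0))$, $\mathcal{U}_k(H) = \mathcal{U}_k(\mathcal{B}(G_0))$, and $\rho_k(H) = \rho_k(\mathcal{B}(G_0))$ for every $k \in \N$. It follows immediately that $\Delta(H)$ is finite, that each $\mathcal{U}_k(H)$ is finite, and that $\rho(H) = \rho(\mathcal{B}(G_0)) < \infty$ (using Proposition \ref{2.4}.2, or directly from the equality of length-systems). The inequality $\rho_k(H) - \rho_{k-1}(H) \le M$ for all $k \ge 2$ is inherited verbatim.

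For the final assertion, if $\Delta(H) \ne \emptyset$ then Theorem \ref{2.6} applies directly to $H$ using the bounds just transferred, delivering the Structure Theorem for Unions of Sets of Lengths. The only minor subtlety, and the closest thing to an obstacle, is the degenerate case $\Delta(H) = \emptyset$: then $H$ is half-factorial, every $\mathcal{U}_k(H) = \{k\}$ is trivially an \textrm{AAP}, and both conclusions of Theorem \ref{2.6} hold vacuously. Thus the proof is essentially a bookkeeping exercise chaining together Proposition \ref{3.3}, Proposition \ref{2.8}, Lemma \ref{4.2}, and Theorem \ref{2.6}, with no substantive difficulty beyond verifying that the transfer preserves each of the invariants involved.
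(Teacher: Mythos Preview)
Your proof is correct and follows exactly the route the paper indicates: the paper simply states that the result ``follows from Propositions \ref{2.8}, \ref{3.3}, and from Lemma \ref{4.2}'', and you have spelled out precisely this chain (finite $G_0$ $\Rightarrow$ $\mathcal B(G_0)$ finitely generated via Proposition \ref{3.3}.1 $\Rightarrow$ Proposition \ref{2.8}.3 gives the needed bounds $\Rightarrow$ Lemma \ref{4.2}.2 transfers everything to $H$ $\Rightarrow$ Theorem \ref{2.6} applies). Your explicit handling of the half-factorial case $\Delta(H)=\emptyset$ is a nice touch that the paper leaves implicit.
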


We end this section by posing the following problem (see  \cite{Ge-Sc-Zh17b}).

\begin{problem} \label{4.7}
Let $R$ be an order in an algebraic number field. Characterize when the monoid of nonzero elements $R^{\bullet}$ and when the monoid of invertible ideals $\mathcal I^* (R)$   are transfer Krull monoids, resp. transfer Krull monoids of finite type.
\end{problem}

\section{The Structure Theorem for Sets of Lengths} \label{5}

In transfer Krull monoids of finite type, not only do unions of sets of lengths do have a well-defined structure (as given in Theorem \ref{4.5}), but the same is true for sets of lengths. We start with a set of examples which demonstrate that the structure of sets of lengths is richer than that of their unions.

\begin{examples} \label{5.1}
Let $G$ be a finite abelian group and $G_0 \subset G$ a subset such that $\mathcal B (G_0)$ is not half-factorial. Since $\min \Delta \big( \mathcal B (G_0) \big) = \gcd \big( \mathcal B (G_0) \big)$ by Proposition \ref{2.3}, it follows that
for every $B \in \mathcal B (G_0)$ and every $y \in \mathsf L (B)$ we have
\[
\mathsf L (B) \subset y + d \Z \quad \text{where} \quad d = \min \Delta \big( \mathcal B (G_0) \big) \,.
\]
Clearly, every set of lengths in $\mathcal B (G_0)$ is an arithmetical progression with difference $d$ if and only if $\Delta \big( \mathcal B (G_0) \big) = \{d\}$. We will demonstrate that arithmetical progressions (of arbitrary lengths) actually occur as sets of lengths, but also several variants of arithmetical progressions do occur naturally.

{\bf 1.} {\it Arithmetical progressions.} Let $g \in G$ with $\ord (g)=n \ge 3$. Then $U = g^n$, $-U=(-g)^n$, and $V=(-g)g$ are atoms, $(-U)U=V^n$, and clearly $\mathsf L \big( (-U)U \big) = \{2,n\}$. For every $k \in \N$, we have
$\mathsf L \big( (-U)^kU^k \big) = 2k + \{\nu (n-2) \mid \nu \in [0, k]\}$.

{\bf 2.} {\it Sumsets of arithmetical progressions.} Let $r, k_1, \ldots, k_r \in \N$ and $n_1, \ldots, n_r \in \N_{\ge 3}$. For every $i \in [1,r]$, let $g_i \in G$ with $\ord (g_i)=n_i$ and we define $B_i = (-g_i)^{n_i}g_i^{n_i}$. If  $\langle g_1, \ldots, g_r \rangle = \langle g_1 \rangle \oplus \ldots \oplus \langle g_r \rangle$, then by 1., $\mathsf L (B_1^{k_1} \cdot \ldots \cdot B_r^{k_r})  =$
\[
\mathsf L (B_1^{k_1}) +  \ldots + \mathsf L ( B_r^{k_r}) \\
  = 2(k_1+ \ldots + k_r) + \sum_{i=1}^r \{\nu (n_i-2) \mid \nu \in [0, k_i]\}
\]
is the sum of $r$ arithmetical progressions. Clearly, the sum of $r$ long arithmetical progressions with differences $d_1, \ldots, d_r$ is an almost arithmetical progression with difference $d = \gcd (d_1, \ldots, d_r)$.

{\bf 3.} {\it Almost arithmetical progressions} (AAPs, see Definition \ref{2.5}). We sketch the argument that large sets of lengths in $\mathcal B (G_0)$ are AAPs with difference $d = \min \Delta \big( \mathcal B (G_0) \big)$ (for a formal statement and proof we refer to \cite[Theorem 4.3.6]{Ge-HK06a}).

We proceed as at the beginning of the proof of Theorem \ref{2.6}.
Clearly, there exist an element $C_0 \in \mathcal B (G_0)$ and $m
\in \mathbb N$ such that $\{m, m+d\} \subset \mathsf L (C_0)$. Since
$d = \gcd \Delta \big( \mathcal B (G_0 \big)$, $\psi = \rho \bigl( \Delta \big( \mathcal B (G_0 \big) \bigr) - 1 \in
\mathbb N$. Then  $L_0 = \{k_0 , k_0 + d, \ldots, k_0 + \psi d \}
\subset \mathsf L (C)$ \ where $C= C_0^{\psi}$ and  $k_0 = \psi m$. Now pick any large element $A \in \mathcal B (G_0)$, where by large we mean that $A$ is divisible by $C$. Thus, for some $B \in \mathcal B (G_0)$, we have
\[
A = BC \quad \text{and} \quad L_0 + \mathsf L (B) \subset \mathsf L (C)+\mathsf L (B) \subset \mathsf L (A) \,.
\]
Since $\mathsf L (B)$ can be viewed as an arithmetical progression with difference $d$ which has gaps (whose number is controlled by $\psi$), the sumset $\mathsf L_0 + \mathsf L (B)$ is an arithmetical progression with difference $d$. Thus, if $A$ is large (with respect to the parameters $d$ and $\psi$ depending on $G_0$), the set of lengths $\mathsf L (A)$ contains a long arithmetical progression with difference $d$ as the central part, whereas  the initial and end parts  may have gaps.

{\bf 4.} {\it Almost arithmetical multiprogressions} (the  definition is given below). Let $G_1 \subset G_0$ be a subset and let $B \in \mathcal B (G_1)$ be such that $\mathsf L (B)$ is an AAP with difference $d$, say
\[
\mathsf L (B)  = y + (L' \cup L^* \cup L'') \subset y + d \mathbb Z
\]
where $L^*$ is a long arithmetical progression with difference
$d$ (the long central part of $\mathsf L (B)$) such that $\min L^* = 0$. It is not difficult to show that every finite subset of $\N_{\ge 2}$ can be realized as a set of lengths (e.g., \cite[Proposition 4.8.3]{Ge-HK06a}). Thus for any set $\mathcal D \subset [0,d]$ with $\min \mathcal D = 0$ and $\max \mathcal D = d$, there is a zero-sum sequence $C$ with $\mathsf L (C) = x + \mathcal D$ for some $x \in \N$. Suppose that $C \in \mathcal B (G_2)$ for a subset $G_2 \subset G_0$ with $\langle G_1 \rangle \cap \langle G_2 \rangle = \{0\}$. Then
\[
\begin{aligned}
(x+y) + & \Big( (L'+\mathcal D) \uplus (L^*+\mathcal D) \uplus (L''+\mathcal D) \Big) \\
 \subset & (x+y) + \mathsf L (B)+\mathsf L (C) \subset (x+y) + \mathsf L (BC) \subset (x+y) + \mathcal D + d \Z \,.
\end{aligned}
\]
Note that the long central part $L^*+\mathcal D$ repeats the set $\mathcal D$ periodically, whereas the short initial and end parts $L'+\mathcal D$ and $L''+\mathcal D$ may contain gaps. Indeed, if $L^* = \{0, d, 2d, \ldots, \ell d \}$, then
\[
L^*+\mathcal D = \mathcal D \cup (d+\mathcal D) \cup \ldots \cup (\ell d + \mathcal D) \subset \mathcal D + d \Z \,.
\]

\end{examples}

Consider transfer Krull monoids of finite type. Then their sets of lengths coincide with sets of lengths of the  monoid of zero-sum sequences. Moreover,  the Structure Theorem for Sets of Lengths for these monoids  states that no other phenomena besides those which we have described in the above examples can occur. We  make this more precise  with the following definition.

\begin{definition} \label{5.2}
Let $d \in \N$, \ $\ell,\, M \in \N_0$ \ and \ $\{0,d\} \subset \mathcal D \subset [0,d]$. A subset $L \subset \Z$ is called an {\it almost arithmetical multiprogression} \ ({\rm AAMP} \ for
      short) \ with \ {\it difference} \ $d$, \ {\it period} \ $\mathcal D$,  {\it length} \ $\ell$ and \ {\it bound} \ $M$, \ if
\[
L = y + (L' \cup L^* \cup L'') \, \subset \, y + \mathcal D + d \Z
\]
where $y \in \Z$ is a shift parameter,
\begin{itemize}
\item the {\it central part} $L^*$ satisfies $\min L^* = 0$, $L^* = [0, \max L^*] \cap (\mathcal D + d \Z)$, and $\ell \in \N$ is maximal such that $\ell d \in L^*$,

\item   the {\it initial part} $L'$ satisfies    $L' \subset [-M, -1]$, \ and

\item the {\it end part} $L''$ satisfies $L'' \subset \max L^* + [1,M]$.
\end{itemize}
\end{definition}
Note that AAMPs are finite subsets of the integers, that an AAMP with period $\mathcal D = \{0,d\}$ is an AAP, and that an AAMP with period $\mathcal D = \{0,d\}$ and bound $M=0$ is a usual arithmetical progression with difference $d$.
As it was with AAPs (see Definition \ref{2.5}), every single finite set is an AAMP with a  trivial choice of parameters (let $L^*$ be a singleton and set $M = \max L$). To discuss one example of an AAMP (with natural parameters), let $n = p_1^{k_1} \cdot \ldots \cdot p_r^{k_r}$, where $r, k_1, \ldots, k_r \in \mathbb N$ and $p_1, \ldots, p_r$ are distinct primes.
We consider the set
$A = \{ a \in [0, n] \mid \gcd (a, n) > 1 \} \cup \{0\}$ and observe that $A = \cup_{i=1}^r p_i \mathbb N_0  \cap [0, n]$.
Setting $d = p_1 \cdot \ldots \cdot p_r$ and  $\mathcal D = A \cap [0, d]$, we obtain that
\[
A = \mathcal D + \{0, d, 2d, \ldots, (n/d-1)d \} \subset \mathcal D + d \Z
\]
is an AAMP  with difference $d$, period $\mathcal D$, and bound $M=0$.

Consider an atomic monoid with nonempty set of distances.  Lemma \ref{2.1} shows that sets of lengths become arbitrarily large.
The Structure Theorem for Sets of Lengths (formulated below) states that the set of distances is finite (whence there are only finitely many periods $\mathcal D$ with differences in $\Delta (H)$) and there is one global bound $M$ for all sets of lengths. Thus (with the above notation) long sets of lengths have a highly structured central part $L^*$, and $L^*$ is the only part of the set of lengths that can become arbitrarily large whereas the initial and end parts are universally bounded.

\begin{theorem}[{\bf Structure Theorem for Sets of Lengths}] \label{5.3}
Let $H$ be a transfer Krull monoid of finite type. Then the set of distances is finite and there is some $M \in \N_0$  such that every $L \in \mathcal L(H)$ is an {\rm AAMP} with some difference $d \in \Delta (H)$ and bound $M$.
\end{theorem}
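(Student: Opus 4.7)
The plan is to reduce, via the transfer homomorphism from Definition \ref{4.4}, to the combinatorial setting of zero-sum sequences over a finite set, and then to exploit the finite generation of $\mathcal B(G_0)$ together with a template-plus-perturbation construction. Lemma \ref{4.2} gives $\mathcal L(H) = \mathcal L(\mathcal B(G_0))$, so it suffices to prove the theorem for $\mathcal B(G_0)$. By Proposition \ref{3.3}(1), $\mathcal B(G_0)$ is finitely generated, whence Proposition \ref{2.8}(3) gives finite elasticity and finite set of distances $\Delta$. Setting $d = \min \Delta$, Proposition \ref{2.3} forces every $L \in \mathcal L(\mathcal B(G_0))$ to lie in a single coset of $d\Z$, which is the skeletal residue structure required by Definition \ref{5.2}.

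Next I would build a template element $C \in \mathcal B(G_0)$ whose set of lengths $\mathsf L(C)$ contains a long arithmetical progression of difference $d$, following the $\psi$-fold sumset idea from step 1(a) of the proof of Theorem \ref{2.6}: start from some $C_0$ with $\{m, m+d\} \subset \mathsf L(C_0)$ (which exists since $d \in \Delta$) and pass to a sufficiently high power $C = C_0^N$. For any $A \in \mathcal B(G_0)$ divisible by $C$, writing $A = BC$ yields $\mathsf L(B) + \mathsf L(C) \subset \mathsf L(A)$. Since $\mathsf L(B)$ lies in a single coset of $d\Z$, its reduction modulo $d$ determines a subset $\mathcal D \subset [0,d]$ with $0, d \in \mathcal D$, and the sumset naturally takes the AAMP shape of Definition \ref{5.2}: the long progression in $\mathsf L(C)$ shifted by the finitely many residues contributed by $\mathsf L(B)$ produces the central block $L^*$, while finite elasticity bounds its length in terms of $\min \mathsf L(A)$.

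The main obstacle is upgrading the set-theoretic inclusion $\mathsf L(B) + \mathsf L(C) \subset \mathsf L(A)$ to an AAMP \emph{equality} modulo bounded initial and end parts, and doing so with a \emph{uniform} bound $M$ valid for every $A \in \mathcal B(G_0)$. For elements $A$ not divisible by $C$, each atom of $\mathcal B(G_0)$ can occur only boundedly often (by the finiteness of the Davenport constant of $G_0$), so only finitely many factorization patterns arise and their sets of lengths have uniformly bounded diameter; these exceptional $L$ can be absorbed into an enlarged $M$. The harder step is the divisible case: one must rule out unexpected elements of $\mathsf L(A)$ lying far outside the predicted central block, and the standard route in the literature is through the \emph{tame degree} of $\mathcal B(G_0)$, which is finite whenever $G_0$ is finite. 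Setting up this tame-degree machinery — essentially a weak-tameness argument that controls how much any factorization can deviate from a fixed normal form — is what distinguishes Theorem \ref{5.3} from its weaker companion Theorem \ref{2.6}, and it is where the technical heart of the proof lies.
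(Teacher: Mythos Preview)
Your reduction to $\mathcal B(G_0)$ and the finiteness of $\Delta(H)$ are correct, and the template idea echoes Example~\ref{5.1}.3, but the core step breaks. You fix $d = \min \Delta(\mathcal B(G_0)) = \gcd \Delta(\mathcal B(G_0))$ globally and then read the period $\mathcal D$ off ``$\mathsf L(B)$ modulo $d$''. Since every set of lengths over $G_0$ already lies in a single coset of $d\Z$, that reduction is a singleton and your $\mathcal D$ collapses to $\{0,d\}$: the construction can only ever output AAPs. This is not enough. Example~\ref{5.1}.4 and the Realization Theorem~\ref{5.4} exhibit sets of lengths that are genuine AAMPs, with a nontrivial gap pattern repeating throughout an arbitrarily long central part; such sets cannot be rewritten as AAPs with difference $\min \Delta$ under any uniform bound $M$. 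The difference $d$ in Theorem~\ref{5.3} is not one global constant --- it depends on $L$ and is drawn from the set of minimal distances $\Delta^*(G_0) = \{\min \Delta(G_1) \mid G_1 \subset G_0,\ \Delta(G_1) \ne \emptyset\}$ (cf.\ the proof of Theorem~\ref{6.6}), an ingredient absent from your sketch.

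A second gap is the non-divisible case. For a fixed template $C$, the condition $C \nmid A$ only forces $\mathsf v_g(A) < \mathsf v_g(C)$ for \emph{some} $g \in \supp(C)$; the remaining multiplicities are free. Any element supported on a subset of $G_0$ disjoint from $\supp(C)$ escapes divisibility by $C$ regardless of its size, so these sets of lengths are not uniformly bounded and cannot be absorbed into $M$. The argument in \cite[Chapter~4]{Ge-HK06a} (to which the paper defers) does not split on divisibility by a single element: it localizes each $A$ to $\mathcal B(\supp(A))$, picking up one of the finitely many minimal distances in $\Delta^*$, and only then uses the finite tame degree --- which you correctly anticipate --- to control the end parts uniformly across all such localizations.
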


The above theorem was first proved in \cite{Ge88} (in a slightly weaker version), and a detailed proof can be found in \cite[Chapter 4]{Ge-HK06a}. To provide an additional example for the validity of the Structure Theorem, take a commutative Mori domain $R$ with complete integral closure $\widehat R$, and with nontrivial conductor $\mathfrak f = (R \DP \widehat R)$. If the class group $\mathcal C (\widehat R)$ and the residue class ring $R/\mathfrak f$ are both  finite, then the Structure Theorem holds true \cite[Theorems 2.11.9 and 4.6.6]{Ge-HK06a} (this setting includes orders in algebraic number fields).
It is an open problem whether the assumption on the finiteness of $R/\mathfrak f$ is necessary for the validity of the Structure Theorem \cite{Ge-Ka10a, Ka16a}.
On the other hand, for transfer Krull monoids of finite type the  description given by the above   Structure Theorem  is best possible as the following realization theorem by Schmid \cite{Sc09a} shows.

\begin{theorem}[{\bf A Realization Theorem}] \label{5.4}
Let $M\in \N_0$ and $\Delta^{\ast}\subset \N$ be a finite nonempty
set. Then there exists a commutative Krull monoid $H$ with finite class group
such that the following holds{\rm \,:} for every {\rm AAMP} \ $L$ \
with difference $d\in \Delta^{\ast}$ and bound $M$ there is some
$y_{H,L} \in \mathbb N$ such that
$y+L \in \mathcal L (H) \quad \text{ for all } \quad y \ge y_{H,L}$.
\end{theorem}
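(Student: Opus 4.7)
My plan is to reduce the problem to constructing, for the given $M$ and $\Delta^*$, a single finite abelian group $G$ such that $H = \mathcal{B}(G)$ does the job. Since by Proposition~\ref{4.3} the system $\mathcal{L}(\mathcal{B}(G))$ coincides with $\mathcal{L}(H)$ for any commutative Krull monoid with class group $G$ (with appropriate distribution of primes), this will suffice. The crucial initial observation is that the set of admissible triples $(L', \mathcal{D}, L'')$ is \emph{finite}: the difference $d \in \Delta^*$ lies in a finite set, $\mathcal{D}$ is a subset of the bounded interval $[0, \max \Delta^*]$ containing $\{0,d\}$, and $L' \subset [-M,-1]$, $L'' \subset [1,M]$ live in fixed finite intervals. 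Hence every AAMP with parameters $(M,\Delta^*)$ is determined, up to a shift in $\Z$, by one of finitely many triples together with the length $\ell$ of its central part, and only $\ell$ is unbounded.

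I would then build $G$ as a direct sum of cyclic summands tailored to these finitely many triples, using the building blocks from Example~\ref{5.1}. For the central arithmetic progression with difference $d$, Example~\ref{5.1}.1 gives, inside $\mathbb{Z}/(d+2)\mathbb{Z}$, an element $A_k := (-U)^k U^k$ with $\mathsf{L}(A_k) = 2k + \{\nu d \mid \nu \in [0,k]\}$; choosing $k \geq \ell$ produces any prescribed central length. For the periodic pattern $\mathcal{D}$ and for realizing the specific sets $L'$ and $L''$, I would invoke the realization fact cited after Example~\ref{5.1}.3 (essentially \cite[Proposition 4.8.3]{Ge-HK06a}), which produces, in $\mathcal{B}(G')$ for some finite abelian group $G'$, zero-sum sequences whose sets of lengths are (up to a shift) any prescribed finite subset of $\N_{\ge 2}$. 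Taking $G$ to be the direct sum of one cyclic summand $\mathbb{Z}/(d+2)\mathbb{Z}$ for each $d \in \Delta^*$ together with the finitely many auxiliary summands needed for the possible $(L',\mathcal{D},L'')$-triples gives a single finite $G$ that carries all required zero-sum machinery.

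The gluing is carried out via the independence principle used in Examples~\ref{5.1}.2 and~\ref{5.1}.4: if zero-sum sequences $B, C$ are supported in subsets $G_1, G_2 \subset G$ with $\langle G_1\rangle \cap \langle G_2\rangle = \{0\}$, then $\mathsf{L}(BC) = \mathsf{L}(B) + \mathsf{L}(C)$. Given an AAMP $L = y'+(L' \cup L^* \cup L'')$ with $L^* = [0,\ell d] \cap (\mathcal{D}+d\Z)$, I would multiply together one zero-sum sequence carrying the central arithmetic progression (from the $\mathbb{Z}/(d+2)\mathbb{Z}$ factor), one carrying the period $\mathcal{D}$, and one carrying the corrections $L'$ and $L''$, each in an independent summand, and then multiply by a further factor with singleton set of lengths $\{y\}$ to achieve the required shift. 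Independence then yields the $\mathsf{L}$ of the product equal to $y+L$, and it remains so for all sufficiently large $y$ by iterating the singleton shift.

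The hard part is calibrating all the shifts so that the Minkowski sum of the component sets of lengths equals $y+L$ \emph{exactly}, not merely contains it. The main danger is that the bounded deviations of each component — the initial/end pieces of the AP-block and of the auxiliary realizations — combine to produce spurious lengths outside $L$. Avoiding this requires choosing each auxiliary zero-sum sequence so that its own ``bounded'' pieces contribute precisely to $L'$ at the bottom and $L''$ at the top, while the central portion aligns with the periodic pattern $\mathcal{D}+d\Z$ on $[0,\ell d]$. Once this bookkeeping is in place, the Krull property of $\mathcal{B}(G)$ with the finite class group $G$ is automatic, yielding the desired $H$.
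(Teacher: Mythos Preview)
First, the paper does not prove Theorem~\ref{5.4}; it only states the result and attributes it to Schmid~\cite{Sc09a}. So there is no argument in the paper to compare yours against.

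On the substance of your sketch: the reduction to finitely many templates $(d,\mathcal D,L',L'')$ is correct, and producing the central block $L^*=\{0,d,\dots,(\ell-1)d\}+\mathcal D$ via independent summands works exactly as in Examples~\ref{5.1}. The gap is in the end pieces, and it is structural rather than a matter of bookkeeping. Your construction with supports in independent summands yields a set of lengths of the shape
\[
\{0,d,\dots,(\ell-1)d\}+T
\]
for some fixed finite $T$ (namely $T=\mathcal D+\mathsf L(B_3)$ up to shift). For $\ell$ large, such a sumset is, in every residue class $r$ modulo $d$, a single arithmetical progression from $\min\bigl(T\cap(r+d\Z)\bigr)$ to $\max\bigl(T\cap(r+d\Z)\bigr)+(\ell-1)d$: any internal gap within a residue class is filled once the AP factor is long enough. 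But an AAMP $L'\cup L^*\cup L''$ need not be an interval in each residue class. Take $d=10$, $\mathcal D=\{0,8,10\}$, $M=12$, $L'=\{-12\}$. In the residue class of $8$ the target is $\{-12\}\cup\{8,18,28,\dots\}$, with a genuine gap at $-2\in[-M,-1]\cap(\mathcal D+d\Z)$. No choice of $T$ reproduces this, since having $-12$ in $\{0,d,2d,\dots\}+T$ forces $-12+d=-2$ in as well. In general your mechanism reaches exactly those AAMPs whose end parts $L',L''$ are, residue by residue, contiguous with the central block; these do not exhaust all AAMPs with bound $M$ unless $M$ is small relative to every $d\in\Delta^*$. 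What you flag as ``the hard part'' is therefore the entire content of Schmid's theorem: one must build zero-sum sequences whose factorization structure produces the irregular ends directly, and this cannot be achieved by multiplying blocks supported on independent direct summands of $G$.
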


We end this section with results which are in sharp contrast to the Structure Theorem. Indeed, they offer monoids where every finite subset of $\N_{\ge 2}$  occurs as a set of lengths. Moreover,  there is a transfer Krull monoid $H_1$ and a monoid $H_2$, which is not a transfer Krull monoid, whose systems of sets of lengths coincide (the first class is due to a theorem of Kainrath \cite{Ka99a} and the second example due to Frisch \cite{Fr13a}).

\begin{theorem} \label{5.5}
For the following classes of monoids we have
\[
\mathcal L (H) = \{ L \subset \N_{\ge 2} \mid L \ \text{is finite and nonempty} \} \cup \big\{ \{0\}, \{1\} \big\} \,.
\]
\begin{itemize}
\item $H$ is a transfer Krull monoid over an infinite abelian group $G$.

\item $H = \Int (\Z)^{\bullet}$ is the monoid of nonzero integer-valued polynomials over $\Z$.
\end{itemize}
Moreover, $\Int (\Z)^{\bullet}$ is not a transfer Krull monoid.
\end{theorem}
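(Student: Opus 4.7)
The theorem has three distinct claims, and the plan is to address them one at a time, reducing each as far as possible to Propositions and Lemmas from Sections 2--4 before pointing to where the deeper work lies.

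For a transfer Krull monoid $H$ over an infinite abelian group $G$, the definition supplies a weak transfer homomorphism $\theta \colon H \to \mathcal B(G)$, and Lemma \ref{4.2} gives $\mathcal L(H) = \mathcal L(\mathcal B(G))$, so the task reduces to computing $\mathcal L(\mathcal B(G))$. The inclusion $\supset$ is immediate: $\{0\} = \mathsf L(1)$, $\{1\}$ is the set of lengths of every atom, and Lemma \ref{2.1} (together with the equivalence discussed just after it) forces any further set of lengths to lie in $\N_{\ge 2}$. For the inclusion $\subset$, given a finite nonempty $L = \{m_1 < \ldots < m_n\} \subset \N_{\ge 2}$, I would follow Kainrath's construction: exploit the infiniteness of $G$ to pick a large collection of elements that are ``independent enough'' (either torsion-free and $\Z$-independent, or of suitably large pairwise coprime orders), then assemble from them a zero-sum sequence $B$ out of carefully chosen atoms whose possible atomic re-factorizations are, by the independence, forced to realize exactly the lengths in $L$.

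For $H = \Int(\Z)^{\bullet}$, the realization of every finite nonempty $L \subset \N_{\ge 2}$ as a set of lengths is Frisch's theorem. The scheme is to take a polynomial of the shape $f = \frac{1}{p}\prod_i (X-a_i)$ (and products of such) with carefully chosen integers $a_i$ and primes $p$; the atoms of $\Int(\Z)$ dividing $f$ are then controlled by how one distributes the prime denominators among the linear factors, and for a well-chosen $a_i$ and $p$ exactly the prescribed collection of factorization lengths arises. Degree considerations and $p$-adic valuations at evaluation points ensure that no additional lengths occur. Together with the trivial contributions of units and atoms, this yields the displayed equality.

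Finally, to show that $\Int(\Z)^{\bullet}$ is not a transfer Krull monoid, I would argue by contradiction: suppose a weak transfer homomorphism $\theta \colon \Int(\Z)^{\bullet} \to \mathcal B(G_0)$ exists for some $G_0$ in an abelian group. Lemma \ref{4.2} shows $\theta$ preserves sets of lengths and atoms, so by the first part $G_0$ must be large enough that all finite subsets of $\N_{\ge 2}$ occur as sets of lengths in $\mathcal B(G_0)$. The plan is then to extract a finer invariant, preserved by weak transfer homomorphisms (for instance, refined data about the multiset of atom-images of a product $fg$, or a suitable catenary-type degree), and to exhibit an atom $f \in \Int(\Z)$ whose local factorization behavior is incompatible with the combinatorics of any $\mathcal B(G_0)$. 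Concretely, the arithmetic of $\mathcal B(G_0)$ forces any relation between atoms to come from the addition in the ambient abelian group, whereas in $\Int(\Z)^{\bullet}$ one can build atoms whose ``overlap pattern'' in a common multiple cannot be modeled by any such addition. Pinning down and verifying such an invariant is the main obstacle and is the content of the deeper theorem of Frisch--Nakato--Rissner on which the final assertion rests.
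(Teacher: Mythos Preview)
The paper does not prove Theorem~\ref{5.5}: it is stated without proof and attributed in the preceding paragraph to Kainrath \cite{Ka99a} (for the transfer Krull case) and to Frisch \cite{Fr13a} (for $\Int(\Z)^\bullet$); the ``Moreover'' clause is asserted without an explicit reference. Your proposal is thus really to be compared against those external sources, and at that level your outlines of Kainrath's and Frisch's constructions are in the correct spirit.

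Two correctable issues. First, you have the containments labeled backwards: the direction you call ``$\supset$'' (that every $\mathsf L(a)$ lies in the right-hand side) is the inclusion $\subset$, and you should justify \emph{finiteness} of $\mathsf L(a)$ there --- this is not Lemma~\ref{2.1} but the fact that $\mathcal B(G)$ is a BF-monoid (Lemma~\ref{2.2}.3). The hard direction, realizing a prescribed finite $L\subset\N_{\ge 2}$, is $\supset$.

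The genuine gap is your treatment of the ``Moreover'' clause. Your plan invokes a ``finer invariant'' such as a catenary-type degree, but the paper deliberately avoids such notions, and it is not clear that \emph{weak} transfer homomorphisms (as opposed to transfer homomorphisms) preserve catenary data in the way you need. Your attribution to a ``theorem of Frisch--Nakato--Rissner'' is also off: that collaboration extends Frisch's realization result to more general base rings, it does not establish that $\Int(\Z)^\bullet$ fails to be transfer Krull. As it stands this paragraph is a hope, not a proof sketch; if you want to pursue it you should identify precisely which invariant you mean, verify that (WT2) transports it, and exhibit the concrete obstruction in $\Int(\Z)$.
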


\section{The Characterization Problem for Systems of Sets of Lengths} \label{6}

Let $H$ be a transfer Krull monoid of finite type. As we have seen in Theorems \ref{4.5} and \ref{5.3}, the finite type property implies the finiteness of the set of distances and the structural results on unions of sets of lengths and on sets of lengths. In this final section we will always suppose that $H$ is a transfer Krull monoid over a finite abelian group, and this assumption will imply even  stronger results.

Thus let $H$ be a transfer Krull monoid over a finite abelian group $G$.  Then Lemma \ref{4.2} implies that $\mathcal L (H) = \mathcal L \big( \mathcal B (G) \big)$, and as usual we set $\mathcal L (G) := \mathcal L \big( \mathcal B (G) \big)$.
Recall all the examples discussed in Section \ref{3} and in Examples \ref{4.6}. In particular, rings of integers of algebraic number fields are the prototypical examples for transfer Krull monoids over finite abelian groups. Classical philosophy in algebraic number theory (dating back to the 19th century) states that the class group determines their arithmetic. This idea can be  justified (see \cite[Section 7.1]{Ge-HK06a}), and  concerning lengths of factorizations it holds true by Proposition \ref{4.3}. In the 1970s Narkiewicz posed the inverse question of whether or not arithmetical behaviour (in other words, behaviour describing the non-uniqueness of factorizations) characterize the class group.
The first affirmative answers (\cite[Sections 7.1 and 7.2]{Ge-HK06a}) have an artificial flavor because the given characterizations are based on rather abstract arithmetical properties which are designed to do the characterization and  play only a small role in other parts of factorization theory. Since on the other hand sets of lengths are of central interest in factorization theory, it has been natural to ask whether their structure is rich enough to force characterizations, and this question is known as the Characterization Problem.

\noindent
{\bf The Characterization Problem.} Given two finite abelian groups $G$ and $G'$ with $\mathsf D (G)\ge 4$ such that $\mathcal L(G) = \mathcal L(G')$.
Does it follow that $G \cong G'$?

\smallskip
Clearly, a necessary condition for an affirmative answer   is that $G$ and $G'$ are isomorphic if and only if the associated monoids $\mathcal B (G)$ and $\mathcal B (G')$ are isomorphic. This necessary condition is guaranteed by Proposition \ref{3.3}.4.
Answering the Characterization Problem is a long-term goal in the study of sets of lengths of transfer Krull monoids over finite abelian groups. We start with two elementary results (Propositions \ref{6.1} and \ref{6.2}). Then we will be in a position to  analyze the Characterization Problem in greater detail.
As usual we set $
\mathcal A (G) := \mathcal A \big( \mathcal B (G) \big), \ \Delta (G) := \Delta \big( \mathcal B (G) \big) , \ \mathcal U_k (G) = \mathcal U_k \big( \mathcal B (G) \big) , \ \text{and} \quad \rho_k (G) := \rho_k \big( \mathcal B (G) \big)$
for every $k \in \N$. Since $\mathcal A (G)$ is finite (see Propositions \ref{2.8} and \ref{3.2}), the {\it Davenport constant}
\[
\mathsf D (G) = \max \{ |U| \mid U \in \mathcal A (G) \}
\]
is finite. Clearly, $\mathsf D (G)$ is the smallest integer $\ell \in \N$ such that every sequence $S$ over $G$ of length $|S| \ge \ell$ has a zero-sum subsequence $T$ of length $|T|\ge 1$. The significance of $\mathsf D (G)$ for the study of sets of lengths will become clear in our next result.
If $|G| \le 2$, then $\mathsf D (G) = |G|$ and Proposition \ref{3.3}.2 implies that
$\mathcal L (G) = \big\{ \{k\} \mid k \in \N_0 \big\}$, whence $\Delta (G) = \emptyset$, and $\mathcal U_k (G) = \{k\}$ for every $k \in \N$. Thus  we  suppose that $2 < |G| < \infty$.

\begin{proposition} \label{6.1}
Let $G$ be a finite abelian group with $|G| \ge 3$.
\begin{enumerate}
\item For every $k \in \N$, $\mathcal U_k (G)$ is an interval, $\rho (G) = \mathsf D (G)/2$, $\rho_{2k} (G) = k \mathsf D (G)$, and
    \[
    1 + k \mathsf D (G) \le \rho_{2k+1} (G) \le k \mathsf D (G) + \lfloor \frac{\mathsf D (G)}{2} \rfloor \,.
    \]

\item $\Delta (G)$ is an interval with $\min \Delta (G)=1$ and $\max \Delta (G) \le \mathsf D (G)-2$.
\end{enumerate}
\end{proposition}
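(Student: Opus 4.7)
The plan begins with the upper bounds on $\rho_k(G)$, which all derive from the equation $\sum_{i=1}^k |U_i| = \sum_{j=1}^\ell |V_j|$ in $\mathcal F(G)$ whenever $U_1 \cdots U_k = V_1 \cdots V_\ell$ is an equality of atomic factorizations in $\mathcal B(G)$. Since the only atom of sequence length $1$ is $0 \in \mathcal F(G)$, and every other atom avoids $0$ entirely (else the subsequence $0$ would be a proper zero-sum subsequence), the $0$-atoms contribute $\mathsf v_0(B)$ to both sides and cancel. After canceling, every remaining atom has length in $[2, \mathsf D(G)]$, giving $2\ell \le k \mathsf D(G)$ and thus $\rho_k(G) \le \lfloor k \mathsf D(G)/2 \rfloor$; in particular $\rho_{2k}(G) \le k \mathsf D(G)$ and $\rho_{2k+1}(G) \le k \mathsf D(G) + \lfloor \mathsf D(G)/2 \rfloor$. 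For the matching lower bound, fix an atom $U = g_1 \cdots g_{\mathsf D(G)}$ of maximum length (no $g_i = 0$). Then $U \cdot (-U) = \prod_{i=1}^{\mathsf D(G)} (g_i(-g_i))$, and each $g_i(-g_i)$ is an atom of length $2$ regardless of whether $g_i = -g_i$. Raising to the $k$-th power gives $\{2k, k\mathsf D(G)\} \subset \mathsf L(U^k(-U)^k)$, so $\rho_{2k}(G) \ge k \mathsf D(G)$; the bound $\rho_{2k+1}(G) \ge 1 + k\mathsf D(G)$ follows from \eqref{basic-inequ} applied to $\rho_1(G) + \rho_{2k}(G)$, and $\rho(G) = \mathsf D(G)/2$ follows by taking $\rho_{2k}(G)/(2k) = \mathsf D(G)/2$ and invoking Proposition \ref{2.4}(2).

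The interval statements in (1) and (2) rest on the existence of atoms of every length $j \in [2, \mathsf D(G)]$: given a zero-sum free sequence $T$ of length $j-1$ (which exists since $\mathsf D(G) - 1$ is the maximum length of a zero-sum free sequence), the sequence $U_j := T \cdot (-\sigma(T))$ is easily checked to be an atom of length $j$. Then $\{2, j\} \subset \mathsf L(U_j(-U_j))$ for each such $j$, so $[2, \mathsf D(G)] \subset \mathcal U_2(G)$, and combined with the upper bound we obtain $\mathcal U_2(G) = [2, \mathsf D(G)]$. In particular, taking $j = 3$ yields $\{2, 3\} \subset \mathsf L(U_3(-U_3))$, so $1 \in \Delta(G)$ and $\min \Delta(G) = 1$. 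To extend interval-ness of $\mathcal U_k(G)$ to $k \ge 3$, I would induct using the inclusion $\mathcal U_k(G) \supset \mathcal U_{k-2}(G) + \mathcal U_2(G)$: the Minkowski sum of an interval with $[2, \mathsf D(G)]$ is again an interval, and combined with the explicit value of $\rho_k(G)$ at the right endpoint (from the first paragraph) and explicit realizations near the left endpoint, this closes the induction.

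For the upper bound $\max \Delta(G) \le \mathsf D(G) - 2$, I would use a swap argument. Given $B \in \mathcal B(G)$ with $\{k, k+d\} \subset \mathsf L(B)$ and no length strictly in between, take factorizations $B = U_1 \cdots U_k = V_1 \cdots V_{k+d}$ and consider the effect of exchanging the atom $U_1$ with the sub-factorization of those $V_j$'s whose supports meet the support of $U_1$: since $|U_1| \le \mathsf D(G)$ and atoms of $\mathcal B(G) \setminus \{0\}$ have length $\ge 2$, a careful count shows that the resulting factorization has length differing from $k$ by some value in $[1, \mathsf D(G) - 2]$, producing an intermediate length and forcing $d \le \mathsf D(G) - 2$. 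To show $\Delta(G)$ itself is an interval, I would exhibit for each $d \in [1, \max \Delta(G)]$ an explicit element realizing distance $d$, for example via variants of the construction $U_{d+2}(-U_{d+2})$ which gives distance $d$ provided the resulting length set equals $\{2, d+2\}$.

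The main obstacles are (a) the Minkowski-sum induction closing $\mathcal U_k(G)$ into a full interval, which demands control of $\lambda_k(G)$ not supplied by the length-counting argument alone, and (b) the swap argument for $\max \Delta(G) \le \mathsf D(G) - 2$, which requires combinatorial bookkeeping of how atoms in one factorization interact with atoms in another. These are the genuinely combinatorial parts of the proof, going beyond the formal monoid-theoretic manipulations already available in Sections \ref{2}--\ref{4}.
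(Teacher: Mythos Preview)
Your computations of $\rho_{2k}(G)$, $\rho(G)$, and the bounds on $\rho_{2k+1}(G)$ match the paper's argument exactly, as does the verification that $1 \in \Delta(G)$.

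For the interval structure of $\mathcal U_k(G)$, your Minkowski-sum induction has a genuine gap at the right endpoint when $k$ is odd: from $\mathcal U_{k-2}(G) + \mathcal U_2(G)$ you reach only $\rho_{k-2}(G) + \mathsf D(G)$, and for $k = 2m+1$ the inequality $\rho_{2m-1}(G) + \mathsf D(G) \le \rho_{2m+1}(G)$ can be strict (you do not know $\rho_{2m+1}(G)$ exactly, only bounds). The paper takes a different route. It first reduces to showing $[k, \rho_k(G)] \subset \mathcal U_k(G)$ for every $k$ (the lower half $[\lambda_k(G), k]$ then follows from the duality $\ell \in \mathcal U_k(G) \Leftrightarrow k \in \mathcal U_\ell(G)$ together with monotonicity of $\rho_\ell$), and proves this by a minimal-counterexample argument: take $B$ of minimal length with $\{k, j\} \subset \mathsf L(B)$ for some $j$ at or beyond the hypothetical gap $\ell$, find two terms $g_1, g_2$ lying in a common atom on the short side and in distinct atoms on the long side, and replace $g_1 g_2$ by the single term $g_1 + g_2$. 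The resulting shorter $B'$ still has $k \in \mathsf L(B')$ and forces $\ell - 1 \in \mathcal U_k(G)$. This gluing trick is the missing idea your sketch lacks.

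For the interval structure of $\Delta(G)$, your proposed construction cannot work. The set $\mathsf L\bigl(U_{d+2}(-U_{d+2})\bigr)$ need not equal $\{2, d+2\}$ in general, and more seriously, for many groups one has $\max \Delta(G) < \mathsf D(G) - 2$ strictly: for instance $\Delta(C_3^2) = \{1\}$ by Proposition~\ref{6.2}(4), while $\mathsf D(C_3^2) - 2 = 3$. So there is no hope of realizing every $d \in [1, \mathsf D(G)-2]$ as a distance, and without an independent handle on $\max \Delta(G)$ you cannot know in advance which $d$ to target. The paper in fact does not prove this interval statement in the text but refers to \cite{Ge-Yu12b}, where the argument is again a gluing-type minimal-counterexample, considerably more delicate than the one for $\mathcal U_k(G)$.

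For the bound $\max \Delta(G) \le \mathsf D(G) - 2$, the paper gives a clean induction on $|A|$: given consecutive lengths $i < k$ in $\mathsf L(A)$, either $|V_j| \ge i$ for all atoms on the long side (whence $ki \le |A| \le i\,\mathsf D(G)$ gives the bound directly), or some $V_1$ divides a product of $i-1$ of the $U$'s, and one applies the induction hypothesis to $V_1^{-1}(U_1 \cdots U_{i-1})$ or to $V_2 \cdots V_k$ depending on a case split. Your ``swap'' intuition is in the right direction, but it is the induction on $|A|$ rather than a single exchange that makes the argument close.
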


\begin{proof}
1. Let   $k \in \N$.
First, we show that $\mathcal U_k (G)$ is an interval. Note that it suffices to prove that $[k,
\rho_k (G)] \subset \mathcal U_k (G)$. Indeed, suppose that this is
done, and let $\ell \in [\min \mathcal U_k (G), k]$. Then $\ell \le k \le \rho_{\ell}
(G)$, hence $k \in \mathcal U_{\ell} (G)$ and consequently $\ell \in
\mathcal U_k (G)$.

Thus let $\ell \in [k, \rho_k (G)]$ be minimal such that $[\ell, \rho_k
(G)] \subset \mathcal U_k (G)$ and assume to the contrary that $\ell >
k$. Let $\Omega$ be the set of all $A \in \mathcal B (G)$ such that
$\{k, j \} \subset \mathsf L (A)$ for some $j \ge \ell$, and let $B \in
\Omega$ be such that $|B|$ is minimal. Then $B = U_1 \cdot \ldots
\cdot U_k = V_1 \cdot \ldots \cdot V_j$, where $j \ge \ell$ and $U_1,
\ldots, U_k, V_1, \ldots, V_j \in \mathcal A (G)$. Since $j > k$, we
have $B \ne 0^{|B|}$, and (after renumbering if necessary) we may
assume that $U_k = g_1g_2 U'$ and $V_{j-1}V_j = g_1g_2 V'$, where
$g_1, g_2 \in G$ and $U', V' \in \mathcal F (G)$. Then $U_k' =
(g_1+g_2) U' \in \mathcal A (G)$, and we suppose that $V_{j-1}' =
(g_1+g_2) V' = W_1 \cdot \ldots \cdot W_t$, where $t \in \N$ and
$W_1, \ldots , W_t \in \mathcal A (G)$. If $B' = U_1 \cdot \ldots
\cdot U_{k-1} U_k'$, then $|B'| < |B|$ and $B' = V_1 \cdot \ldots
\cdot V_{j-2}W_1 \cdot \ldots \cdot W_t$. By the minimal choice of
$|B|$, it follows that $j-2+t<\ell$, hence $t=1$, $j=\ell$ and $\ell-1 \in
\mathcal U_k (G)$, a contradiction.

Second, we study $\rho_k (G)$. We start with the following assertion.

\smallskip
{\bf A.} If $A = 0^mB \in \mathcal B (G)$, with $m = \mathsf v_0 (A) \in \N_0$ and $B \in
      \mathcal B (G)$, then
  \[
      2 \max \mathsf L (A) - m \le |A| \le \mathsf D (G) \min
      \mathsf L (A) - m ( \mathsf D (G) - 1)
      \quad \text{and} \quad
      \rho (A) \le \frac{\mathsf D (G)}{2} \,.
      \]

\noindent
{\it Proof of} \,{\bf A}.\, Let $A = 0^m U_1 \cdot \ldots \cdot U_{\ell}$ where $\ell, m \in \N_0$
and $U_1, \ldots, U_{\ell} \in \mathcal A (G) \setminus \{0\}$. Then $2 \le
|U_{\nu}| \le \mathsf D (G)$ for all $\nu \in [1, \ell]$ and hence
\[
m + 2 \ell \le |A| \le m + \ell \mathsf D (G) \,.
\]
Choosing $\ell = \min \mathsf L (B)$ and $\ell = \max \mathsf L (B)$ we
obtain the first inequalities, and then we obtain
\[
\rho (A) = \frac{\max \mathsf L (A)}{\min \mathsf L (A)} = \frac{m +
\max \mathsf L (B)}{m + \min \mathsf L (B)} \le \frac{\max \mathsf L
(B)}{\min \mathsf L (B)} \le \frac {\mathsf D (G)}{2} \,.
\]
The proof of {\bf A} and
Proposition \ref{2.4}.2  imply that $\rho_k (G) \le k \rho (G) \le k \mathsf D (G)/2$.
If $U = g_1 \cdot \ldots \cdot g_{\mathsf D (G)} \in \mathcal A (G)$, then $(-U)^kU^k= \prod_{i=1}^{\mathsf D (G)} \big( (-g_i)g_i \big)^k$, whence $k \mathsf D (G) \le \rho_{2k}(G)$ and thus $\rho_{2k}(G) = k \mathsf D (G)$. Furthermore, it follows that
\[
1+ \mathsf k \mathsf D (G) = \rho_1(G) + \rho_{2k}(G) \le \rho_{2k+1}(G) \le \frac{(2k+1)\mathsf D (G)}{2} \,.
\]
Finally, Proposition \ref{2.4}.2 implies that $\rho (G)=\mathsf D (G)/2$.

2. The proof that $\Delta (G)$ is an interval is similar  but   trickier than that of 1., and we refer to \cite{Ge-Yu12b}. It is easy to   verify that $1 \in \Delta (G)$, and we encourage the reader to do so.
Next we prove that $\max \Delta (G) \le \mathsf D (G)-2$. If $A' = 0^kA$ with $k \in \N_0$ and $A \in \mathcal B (G)$ with $0 \nmid A$, then $\mathsf L (A') = k + \mathsf L (A)$ and $\Delta ( \mathsf L (A')) = \Delta ( \mathsf L (A))$. Thus we have to prove that $\max \Delta ( \mathsf L (A)) \le \mathsf D (G)-2$ for all $A \in \mathcal B (G)$ with $0 \nmid A$, and we proceed by induction on $|A|$.
Suppose that
\[
A = U_1 \cdot \ldots \cdot U_i = V_1 \cdot \ldots \cdot V_k,  \ \text{ where }  \ i<k , \  U_1, \ldots, U_i, V_1, \ldots, V_k \in \mathcal A (G) \,,
\]
and $\mathsf L (A) \cap [i,k]=\{i,k\}$.
If $|A| \le 2 \mathsf D (G)$, then $k \le \mathsf D (G)$ and $k-i \le \mathsf D (G)-2$. Suppose that $|A| > 2 \mathsf D (G)$ and that
$\max \Delta ( \mathsf L (A')) \le \mathsf D (G)-2$ for all $A'$ with $|A'| < |A|$. If $|V_j| \ge i$ for all $j \in [1,k]$, then
\[
k i \le |V_1 \cdot \ldots \cdot V_k| = |U_1 \cdot \ldots \cdot U_i| \le i \mathsf D (G) \quad \text{and hence} \quad k-i \le \mathsf D (G)- 2 \,.
\]
Suppose that there is a $j \in [1,k]$ such that $|V_j|<i$, say $j=1$, $V_1 \t U_1 \cdot \ldots \cdot U_{i-1}$, and let $U_1 \cdot \ldots \cdot U_{i-1} = V_1 W_2 \cdot \ldots \cdot W_{\ell}$ with $\ell \in \N$ and $W_2, \ldots, W_{\ell} \in \mathcal A (G)$. We distinguish two cases.

\noindent
CASE 1: \ $\ell \ge i$.
Since $\mathsf L (U_1 \cdot \ldots \cdot U_i) \cap [i,k] = \{i,k\}$, it follows that $\mathsf L (U_1 \cdot \ldots \cdot U_{i-1}) \cap [i,k-2] = \emptyset$ and hence $\ell \ge k-1$. We may suppose that $\ell \ge k-1$ is minimal such that $U_1 \cdot \ldots \cdot U_{i-1}$ satisfies such an equation. Then the induction hypothesis implies that $\ell-(i-1) \le \mathsf D (G)-2$ and hence $k-i \le (\ell+1)-i \le \mathsf D (G)-2$.

\noindent
CASE 2: \ $\ell \le i-1$.
Since $V_1 \cdot \ldots \cdot V_k = U_1 \cdot \ldots \cdot U_i = V_1 W_2 \cdot \ldots \cdot W_{\ell}U_i$, it follows that $V_2 \cdot \ldots \cdot V_k = U_iW_2 \cdot \ldots \cdot W_{\ell}$.
Note that $\mathsf L (V_2 \cdot \ldots \cdot V_k) \cap [i, k-2] = \emptyset$, and suppose that $\ell \le i-1$ is maximal such that $V_2 \cdot \ldots \cdot V_k$ satisfies such an equation. Then the induction hypothesis implies that $(k-1)-\ell \le \mathsf D (G)-2$ and hence $k-i=(k-1)-(i-1) \le k-1-\ell \le \mathsf D (G)-2$.
\end{proof}

The state of the art on $\rho_{2k+1} (G)$ is discussed in \cite{Sc16a}.
For some small groups $G$ the system $\mathcal L (G)$ can be written down explicitly.

\medskip
\begin{proposition} \label{6.2}~

\begin{enumerate}
\item $\mathcal L (C_3) = \mathcal L (C_2 \oplus C_2) = \bigl\{ y
      + 2k + [0, k] \, \bigm| \, y,\, k \in \N_0 \bigr\}$.

\item $\mathcal L (C_4) = \bigl\{ y \negthinspace + \negthinspace k \negthinspace+1\negthinspace \negthinspace + \negthinspace [0,k] \mid y,
      k \in \N_0 \bigr\} \cup  \bigl\{ y \negthinspace +  \negthinspace 2k \negthinspace + \negthinspace 2 \cdot [0,k] \mid y, k \in \N_0 \bigr\}$.

\item $\mathcal L (C_2^3)  =  \bigl\{ y + (k+1) + [0,k] \, \bigm|\, y \in \N_0, \ k \in [0,2] \bigr\} \ \cup$ \newline
      $ \ \bigl\{ y + k + [0,k] \, \bigm|\, y \in \N_0, \ k \ge 3 \bigr\}
      \cup \bigl\{ y + 2k
      + 2 \cdot [0,k] \, \bigm|\, y ,\, k \in \N_0 \bigr\}$.

\item  $\mathcal L (C_3^2) = \{ [2k, \ell] \mid k \in \mathbb N_0, \ell \in [2k, 5k]\}$ \newline
 $\quad \text{\, } \ \qquad$ \quad $\cup \ \{ [2k+1, \ell] \mid k \in \N, \ell \in [2k+1, 5k+2] \} \cup \{ \{ 1\}  \}$.
\end{enumerate}
\end{proposition}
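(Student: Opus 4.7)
The plan is to treat each of the four parts by a common template: (A) enumerate the atoms $\mathcal{A}(G) \subset \mathcal{B}(G)$ and read off the Davenport constant $\mathsf{D}(G)$; (B) apply Proposition~\ref{6.1} to extract the structural bounds $\rho(G) = \mathsf{D}(G)/2$, $\max \Delta(G) \le \mathsf{D}(G) - 2$, and the $\rho_k(G)$ estimates; (C) for each set $L$ in the claimed family, construct an explicit zero-sum sequence $A \in \mathcal{B}(G)$ with $\mathsf{L}(A) = L$; (D) argue that no $L \in \mathcal{L}(G)$ lies outside the listed families.

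Part~1 is the easiest. The atoms of $\mathcal{B}(C_3)$ are $\{0, g(-g), g^3, (-g)^3\}$, and those of $\mathcal{B}(C_2 \oplus C_2)$ are $\{0, e_1^2, e_2^2, e_3^2, e_1 e_2 e_3\}$ with $e_3 = e_1 + e_2$, so $\mathsf{D}(G) = 3$. Proposition~\ref{6.1} then forces $\Delta(G) = \{1\}$ (whence every $L \in \mathcal{L}(G)$ is an interval) and $\rho(G) = 3/2$. Writing $L = [a, a+k]$, the inequality $\rho(L) \le 3/2$ rearranges to $a \ge 2k$, i.e.\ $L = y + 2k + [0, k]$ with $y = a - 2k \ge 0$. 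Conversely, the identity $(g(-g))^3 = g^3 (-g)^3$ (respectively $(e_1 e_2 e_3)^2 = e_1^2 e_2^2 e_3^2$) yields $\mathsf{L}\bigl((g^3 (-g)^3)^k\bigr) = [2k, 3k]$ by a direct enumeration of factorizations, and multiplication by $0^y$ produces $y + 2k + [0, k]$.

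Parts~2 and~3 have $\mathsf{D}(G) = 4$, hence $\rho(G) = 2$ and $\Delta(G) \subseteq \{1, 2\}$, giving rise to two distinct shapes (intervals or difference-$2$ progressions). For $C_4$, the progression family $y + 2k + 2 \cdot [0, k]$ is realized by $0^y \cdot (g^4 (3g)^4)^k$, for which the only atoms appearing in factorizations are $g(3g)$, $g^4$, $(3g)^4$; the interval family $y + k + 1 + [0, k]$ is realized by elements mixing $2g$ with the other generators (for instance $g^4 (2g)^2 (3g)^4$ realizes $\{3, 4, 5\}$), with larger $k$ and arbitrary $y$ produced by taking powers and by multiplication by~$0$. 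For $C_2^3$ the analogous roles are played by the length-$4$ atom $e_1 e_2 e_3 (e_1 + e_2 + e_3)$ (for Family~2) and the length-$3$ atoms coming from $C_2^2$-subgroups (for Family~1); the exceptional clause $k \in [0, 2]$ in the statement records that for small $k$ the two families overlap. The converse (D) requires showing that, for every $A$, $\Delta(\mathsf{L}(A))$ is a singleton and the precise endpoints of $\mathsf{L}(A)$ match one of the two families; this amounts to a case analysis on the multiplicities of atoms dividing $A$, with exchange relations used to rule out hybrid shapes.

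Part~4 is the hardest. For $\mathcal{B}(C_3^2)$ one has $\mathsf{D} = 5$, so Proposition~\ref{6.1} yields $\rho(G) = 5/2$, $\rho_{2k}(G) = 5k$, and $\rho_{2k+1}(G) \le 5k + 2$. The pivotal step is the identity $\Delta(C_3^2) = \{1\}$, that is, every $L \in \mathcal{L}(C_3^2)$ is an interval. This is the main obstacle of the whole proposition: its proof requires a delicate refinement argument, whereby given two factorizations of an element of lengths $k < \ell$ with no intermediate length realized, one manipulates the length-$5$ and length-$3$ atoms over $C_3^2$ to produce a factorization of length $k + 1$. Once this identity is secured, the elasticity bounds separate according to the parity of $\min L$ into the two subfamilies $[2k, \ell]$ with $\ell \in [2k, 5k]$ and $[2k+1, \ell]$ with $\ell \in [2k+1, 5k+2]$, and realization of each such interval is achieved by combining powers of $U \cdot (-U)$ for length-$5$ atoms $U$ with shifts by length-$2$ atoms $g(-g)$ and by~$0$.
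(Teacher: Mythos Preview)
Your treatment of Part~1 is correct and essentially coincides with the paper's proof: the paper also lists the atoms, notes $\mathsf D(G)=3$, deduces $\Delta(G)=\{1\}$ from Proposition~\ref{6.1}, realizes $y+2k+[0,k]$ via $0^y\bigl((-U)U\bigr)^k$, and rules out other intervals. The only cosmetic difference is that the paper phrases the exclusion step via the exact values $\rho_{2k}(G)=3k$ and $\rho_{2k+1}(G)=3k+1$, whereas you use the single inequality $\rho(L)\le 3/2$. For Part~1 both arguments are equivalent.

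For Parts~2--4 the paper gives no proof at all (it cites \cite{Ge-Sc16a}), so there is nothing to compare against directly; what matters is whether your plan would actually close. Two points need attention. First, in your step~(D) for $C_4$ and $C_2^3$ you must show that each individual $\mathsf L(A)$ is an arithmetical progression, not merely that $\Delta(G)\subset\{1,2\}$; you flag this as ``a case analysis on the multiplicities of atoms'', but that is exactly where the work lies and your sketch does not indicate how the exchange relations prevent a set of lengths from containing both a gap of size~$1$ and a gap of size~$2$. Second, and more concretely, your elasticity argument is not sharp enough for the interval family in $\mathcal L(C_4)$: the bound $\rho(L)\le\rho(G)=2$ only gives $\min L\ge k$ for an interval of width~$k$, whereas the statement requires $\min L\ge k+1$. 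The missing unit comes from the finer fact that the extremal ratio $\max L/\min L=2$ is attained only by the difference-$2$ progressions, never by intervals; establishing this needs the $\rho_k$-values (or the case analysis you defer), not just $\rho(G)$. The analogous sharpness issues recur in Parts~3 and~4 (e.g.\ why $\rho_{2k+1}(C_3^2)=5k+2$ exactly, and why every intermediate interval is realized). Your identification of $\Delta(C_3^2)=\{1\}$ as the crux of Part~4 is correct, but the ``refinement argument'' you allude to is the substance of that case and is not supplied.
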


\begin{proof}
We prove the first statement. The proofs of the remaining statements are similar but more lengthy (details can be found in \cite[Proposition 4.2]{Ge-Sc16a}). Suppose that $G$ is cyclic of order three, say $G = \{0,g,-g\}$. Then $\mathcal A (G) = \{0, U=g^3, -U, V=(-g)g \}$, $\mathsf D (G)=3$, and  $(-U)U = V^3$ is the only minimal relation.
Clearly, $\mathsf L(V^3) = \{2,3\}$, $\mathsf L (V^{3k}) = \{2k, 2k+1, \ldots, 3k\} = 2k+[0,k]$, and $\mathsf L (0^y V^{3k}) = y+2k+[0,k]$ for all $y,k \in \N_0$. Since $\Delta (G) = \{1\}$, $\rho_{2k}(G)=3k$, and $\rho_{2k+1}(G)=3k+1$ for every $k \in \N$ by Proposition \ref{6.1}, there are no further sets of lengths.

Suppose that $G$ is an elementary $2$-group of rank two, say $G = \{0, e_1, e_2, e_1+e_2\}$. Then $\mathcal A (G) = \{0, U=e_1e_2(e_1+e_2), V_1=e_1^2, V_2=e_2^2, V_3=(e_1+e_2)^2\}$, and hence $U^2=V_1V_2V_3$ is the only minimal relation. Now the proof runs along the same lines as above.
\end{proof}

One big difficulty in all work on  the Characterization Problem stems from the fact that most sets of lengths over any finite abelian group are intervals. To make this precise we mention two deep results without proof.

\begin{theorem}[Sets of lengths which are intervals] \label{6.3}
Let $G$ be a finite abelian group with $|G| \ge 3$.
\begin{enumerate}
\item If $A$ is a zero-sum sequence whose support $\supp (A) \cup \{0\}$ is a subgroup of $G$, then  $\mathsf L (A)$ is an interval.

\item If $R$ is the ring of integers   of an algebraic number field $K$ with class group $G$, then
      \[
      \lim_{x \to \infty} \frac{ \# \{aR \mid \mathcal N_{K/\Q} (aR) \le x, \ \mathsf L (a) \ \text{{\rm is an interval}} \}}{ \# \{aR \mid \mathcal N_{K/\Q} (aR) \le x \} }  = 1 \,.
      \]
      \end{enumerate}
\end{theorem}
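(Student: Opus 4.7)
The plan for the first assertion is induction on $|A|$, exploiting the symmetry afforded by the subgroup hypothesis: if $H = \supp(A) \cup \{0\}$ is a subgroup of $G$, then $-g \in \supp(A)$ for every nonzero $g \in \supp(A)$, so every two-element sequence $g(-g)$ with $g \in H \setminus \{0\}$ is an atom of $\mathcal B(H)$. Assume, for contradiction, that $k, \ell \in \mathsf L(A)$ satisfy $\ell \ge k + 2$ with no length in between, fix factorizations $A = U_1 \cdots U_k = V_1 \cdots V_\ell$, and analyze how the atoms $U_i$ and $V_j$ must share supports. The goal is to modify one of the factorizations by extracting a length-two atom $g(-g)$, using the subgroup symmetry to locate $-g$ in the complementary factorization, and so isolate a strictly shorter zero-sum sequence $A' \in \mathcal B(H)$; by the inductive hypothesis $\mathsf L(A')$ is an interval, and splicing yields a factorization of $A$ of some length strictly between $k$ and $\ell$, contradicting the assumption. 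The formal execution, carried out in detail in \cite{Ge-HK06a}, requires careful bookkeeping to ensure that the intermediate zero-sum sequences remain in $\mathcal B(H)$ so that the induction is well-founded.

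For the second assertion the approach is analytic. By the transfer homomorphism $\boldsymbol\beta : R^\bullet \to \mathcal B(G)$ of Proposition~\ref{4.3}, one has $\mathsf L(a) = \mathsf L_{\mathcal B(G)}(\boldsymbol\beta(a))$, so by part (1) it suffices to show that the proportion of principal ideals $aR$ of norm at most $x$ for which $\supp(\boldsymbol\beta(a)) = G$ tends to $1$ as $x \to \infty$. The total count of principal ideals of norm at most $x$ is $\sim c_K x$ by Landau's ideal counting theorem combined with character orthogonality over the class group. For any proper subset $G' \subsetneq G$, the Landau--Selberg--Delange method, combined with the Chebotarev-style equidistribution of primes across ideal classes (compare Proposition~\ref{3.1}), shows that the number of ideals of norm at most $x$ all of whose prime ideal factors lie in classes from $G'$ is of order $x(\log x)^{|G'|/|G| - 1}$, which is $o(x)$ since $|G'| < |G|$. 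A union bound over the $|G|$ choices $G' = G \setminus \{g\}$ establishes that the proportion of principal ideals $aR$ with some class missing from $\supp(\boldsymbol\beta(a))$ is $O\bigl((\log x)^{-1/|G|}\bigr) \to 0$.

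The main obstacle is clearly the first assertion. The surgical step is delicate because an arbitrary factorization of $A$ need not contain the extracted atom $g(-g)$, so one must argue that the length obtained via the splice is genuinely new and lies in the desired open interval $(k, \ell)$. The subgroup hypothesis is used essentially: without it, the intermediate zero-sum sequences produced during the extraction might leave $\mathcal B(H)$, breaking the induction. Once part (1) is established, the second assertion is a routine application of the analytic machinery and goes through without serious further difficulty.
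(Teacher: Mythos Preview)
The paper does not prove Theorem~\ref{6.3} at all: it is introduced as one of ``two deep results \emph{without proof}'' and the reader is referred to \cite[Theorem~7.6.8 and Theorem~9.4.11]{Ge-HK06a}. So there is no in-paper proof to compare against; your proposal is being measured against the actual argument in \cite{Ge-HK06a}, which for part~(1) is substantially more elaborate than a direct induction on $|A|$.

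Your outline for part~(1) has a real gap. The induction hypothesis you want to invoke is that $\mathsf L(A')$ is an interval whenever $\supp(A')\cup\{0\}$ is a subgroup. But your surgery --- peel off an atom $g(-g)$ and pass to a shorter $A'$ --- does not preserve this hypothesis: if $g$ (or $-g$) occurs in $A$ with multiplicity one, then $\supp(A')\cup\{0\}$ is a proper subset of $H$ and is no longer a subgroup, so the induction does not apply to $A'$. Your remark that the bookkeeping is needed ``to ensure that the intermediate zero-sum sequences remain in $\mathcal B(H)$'' misidentifies the obstacle: any subsequence of $A$ that has sum zero is automatically in $\mathcal B(H)$, since its support is contained in $H$. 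The genuine difficulty is the \emph{full-support} condition $\supp(A')\supset H\setminus\{0\}$, and nothing in your sketch explains how to maintain it. This is precisely why the result is labeled ``deep'': the proof in \cite[Section~7.6]{Ge-HK06a} does not proceed by this kind of naive induction but goes through finer structural invariants of factorizations.

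Your sketch for part~(2) is in the right spirit and matches how the limit formula is derived: combine part~(1) (a principal ideal whose prime factors hit every class has an interval as its set of lengths) with the analytic machinery showing that ideals missing some class are asymptotically negligible. The Landau--Selberg--Delange estimate you quote is the correct order of magnitude, and the union bound over missing classes is fine. So once part~(1) is in hand, part~(2) proceeds essentially as you say; but part~(1) itself is not established by your argument.
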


The first statement is a result in additive combinatorics which can be found in \cite[Theorem 7.6.8]{Ge-HK06a}. The limit formula is based on the first statement and on the analytic machinery of counting functions \cite[Theorem 9.4.11]{Ge-HK06a}. In the 1960s Narkiewicz initiated a systematic study of the asymptotic behavior of counting functions associated with non-unique factorizations.    We refer  to the monographs \cite[Chapters 7 and 9]{Na04}, \cite[Chapters 8 and  9]{Ge-HK06a}, and to  \cite{Ka16a} (analytic monoids, introduced in \cite{Ka16a}, are Krull monoids which have an abstract norm function satisfying axioms which allow to develop a theory of $L$-functions).

In spite of Theorem \ref{6.3} and
having Propositions \ref{6.1} and \ref{6.2} at our disposal, we start with a more detailed analysis of the Characterization Problem. We have  seen that
\[
\mathcal L (C_1) = \mathcal L (C_2) \quad \text{and} \quad \mathcal L (C_3) = \mathcal L (C_2 \oplus C_2) \,.
\]
An abelian group $G$ has Davenport constant $\mathsf D (G) \le 3$ if and only if it is either cyclic of order $|G| \le 3$ or isomorphic to $C_2 \oplus C_2$. Thus  we focus on groups whose Davenport constant is at least four.
Let $G$ be a finite abelian group with $\mathsf D (G) \ge 4$, say
\[
G \cong C_{n_1} \oplus \ldots \oplus C_{n_r} \ \text{ with} \ 1 < n_1 \t \ldots \t n_r \quad \text{and set} \quad \mathsf D^* (G) = 1 + \sum_{i=1}^r (n_i-1) \,.
\]
Clearly, the system $\mathcal L (G)$ depends only on $G$ and hence on the group invariants $(n_1, \ldots, n_r)$. Thus $\mathcal L (G)$ as a whole as well as the invariants controlling $\mathcal L (G)$ -- such as the set of distances $\Delta (G)$ and the $k$th elasticities $\rho_k (G)$ -- allow a description in $(n_1, \ldots, n_r)$. We demonstrate the complexity of such problems by considering $\rho_2 (G)$.

By Proposition \ref{6.1}, we have $\rho_2 (G) = \mathsf D (G)$. The Davenport constant $\mathsf D (G)$  is one of the most classical zero-sum invariants which has been studied since the 1960s. If $(e_1, \ldots, e_r)$ is a basis of $G$ with $\ord (e_i)=n_i$ for each $i \in [1,r]$, then
\[
A = (e_1+ \ldots + e_r)\prod_{i=1}^r e_i^{n_i-1} \in \mathcal A (G) \,,
\]
and hence $\mathsf D^* (G) = |A| \le \mathsf D (G)$.
It has been known since the 1960s that equality holds for $p$-groups and groups of rank at most two \cite[Theorem 5.8.3]{Ge-HK06a}. It is an open problem whether equality holds for groups of rank three, but for every $r \ge 4$ there are infinitely many groups $G$ having rank $r$ and for which $\mathsf D^* (G) < \mathsf D (G)$ holds.
We refer to \cite{Ge-Ru09, Sc16a} for a survey of what is known on parameters controlling $\mathcal L (G)$. We first show a simple finiteness result and then  present one result (a proof can be found in \cite[Theorem 6.6.3]{Ge-HK06a}) revealing characteristic phenomena of $\mathcal L (G)$ for cyclic groups and elementary $2$-groups.

\begin{lemma} \label{6.4}
Let $G$ be a finite abelian group with $\mathsf D (G) \ge 4$. Then there are only finitely many abelian groups  $G'$ (up to isomorphism) such that $\mathcal L (G) = \mathcal L (G')$.
\end{lemma}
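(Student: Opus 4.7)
The plan is to show that the assumption $\mathcal{L}(G) = \mathcal{L}(G')$ forces the order of $G'$ to be bounded by a quantity depending only on $\mathsf{D}(G)$; since there are only finitely many abelian groups of any given bounded order, this will yield the conclusion.

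First I would extract the equality $\mathsf{D}(G') = \mathsf{D}(G)$ from $\mathcal{L}(G) = \mathcal{L}(G')$. The hypothesis $\mathsf{D}(G) \ge 4$ implies $|G| \ge 3$, and by Proposition \ref{6.1} the set $\Delta(G)$ is a nonempty interval, so $\mathcal{L}(G)$ contains sets of cardinality at least $2$. Consequently $\mathcal{L}(G')$ is not reduced to singletons, which by Proposition \ref{3.3}.2 forces $|G'| \ge 3$. With both $G$ and $G'$ satisfying the hypotheses of Proposition \ref{6.1}, the equality of length systems gives $\rho_2(G) = \rho_2(G')$, and the identity $\rho_2(H) = \mathsf{D}(H)$ (valid for any $H$ with $|H| \ge 3$ by Proposition \ref{6.1}.1) yields $\mathsf{D}(G) = \mathsf{D}(G') =: d$.

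Second I would use the universal lower bound $\mathsf{D}^*(G') \le \mathsf{D}(G')$, which the excerpt establishes by exhibiting the explicit atom $A = (e_1 + \ldots + e_{r'}')\prod_{i=1}^{r'}(e_i')^{n_i'-1}$ associated with a basis of $G'$. Writing $G' \cong C_{n_1'} \oplus \ldots \oplus C_{n_{r'}'}$ with $1 < n_1' \mid \ldots \mid n_{r'}'$, the inequality $1 + \sum_{i=1}^{r'}(n_i' - 1) \le d$ delivers simultaneously $r' \le d-1$ (each summand is at least $1$) and $n_{r'}' \le d$. Since every $n_i'$ divides $n_{r'}'$, we obtain
\[
|G'| \,=\, \prod_{i=1}^{r'} n_i' \,\le\, (n_{r'}')^{r'} \,\le\, d^{\,d-1},
\]
a bound depending only on $G$. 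There are only finitely many isomorphism classes of abelian groups of order at most $d^{\,d-1}$, so the set of candidates for $G'$ is finite, as claimed.

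There is no serious obstacle: the argument is essentially a repackaging of the two ingredients already present in the text, namely the formula $\rho_2(G) = \mathsf{D}(G)$ from Proposition \ref{6.1} and the inequality $\mathsf{D}^*(G') \le \mathsf{D}(G')$. The lemma is listed precisely because this minimal input suffices, providing a clean stepping-stone toward the sharper, and much harder, rigidity statements underlying Conjecture \ref{6.7}.
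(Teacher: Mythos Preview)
Your proof is correct and follows essentially the same route as the paper: from $\mathcal L(G)=\mathcal L(G')$ you extract $\rho_2(G)=\rho_2(G')$, apply Proposition~\ref{6.1} to get $\mathsf D(G)=\mathsf D(G')$, and then use $\mathsf D^*(G')\le \mathsf D(G')$ to bound the invariants of $G'$. Your version is a bit more explicit---you justify $|G'|\ge 3$ before invoking Proposition~\ref{6.1} and write down the concrete bound $|G'|\le d^{d-1}$---but these are elaborations of the same argument rather than a different approach.
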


\begin{proof}
If $G'$ is an abelian group such that $\mathcal L (G') = \mathcal L (G)$, then  Proposition \ref{6.1} implies $\mathsf D (G) \negthinspace=\negthinspace \rho_2 (G) \negthinspace=\negthinspace \rho_2 (G') \negthinspace=\negthinspace \mathsf D (G')$ and hence $ \mathsf D^* (G') \le \mathsf D (G)$.
Since there are only finitely many  $G'$ (up to isomorphism) such that $\mathsf D^* (G')$ is bounded above by a constant, there are only finitely many groups $G'$ for which $\mathcal L (G') = \mathcal L (G)$ can hold.
\end{proof}

\begin{proposition} \label{6.5}
Let $G$ be a finite abelian group with $\mathsf D (G) \ge 4$. Then $\{2, \mathsf D (G)\} \in \mathcal L (G)$ if and only if $G$ is either cyclic or an elementary $2$-group.
\end{proposition}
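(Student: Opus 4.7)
The plan is to prove the forward direction by explicit construction and the reverse direction by a structural analysis of any hypothetical $A$ with $\mathsf{L}(A) = \{2, \mathsf{D}(G)\}$.

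For the forward direction, if $G = C_n$ is cyclic with $n = \mathsf{D}(G) \ge 4$ and $g$ is a generator, I would take $U = g^n \in \mathcal{A}(G)$ and $A = U \cdot (-U) = g^n(-g)^n$. Since $\supp(A) \subseteq \{g,-g\}$, every zero-sum subsequence of $A$ has the form $g^a(-g)^b$ with $a \equiv b \pmod n$, and a direct enumeration shows the atoms dividing $A$ are exactly $g^n$, $(-g)^n$, and $g(-g)$, from which $\mathsf{L}(A) = \{2,n\}$ drops out. If $G = C_2^r$ is an elementary $2$-group with $r+1 = \mathsf{D}(G) \ge 4$, fix a basis $e_1,\dots,e_r$, set $s = e_1 + \cdots + e_r$, $W = s \cdot e_1 \cdots e_r \in \mathcal{A}(G)$, and $A = W^2 = s^2 e_1^2 \cdots e_r^2$. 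The $\F_2$-linear independence of the basis makes the minimal zero-sum subsequences of $A$ exactly $s^2, e_1^2, \ldots, e_r^2$, and $W$; the only factorizations of $A$ are $W \cdot W$ (length $2$) and $s^2 e_1^2 \cdots e_r^2$ (length $r+1$), yielding $\mathsf{L}(A) = \{2, r+1\}$.

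For the reverse direction, assume $\mathsf{L}(A) = \{2, D\}$ with $D = \mathsf{D}(G) \ge 4$. Since $0 \in G$ is a prime atom of $\mathcal{B}(G)$, I may assume $0 \nmid A$. Combining $A = U_1 U_2$ (with $|U_i| \le D$) and $A = V_1 \cdots V_D$ (with $|V_i| \ge 2$) forces $|A| = 2D$, $|U_1| = |U_2| = D$, and $|V_i| = 2$. Hence each $V_i = g_i(-g_i)$ and $A = B \cdot (-B)$ with $B = g_1 \cdots g_D$. Next I would establish the key structural lemma: every atom $W$ of $\mathcal B(G)$ with $W \mid A$ satisfies $|W| \in \{2, D\}$. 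Indeed, any factorization of $A$ that uses $W$ has total length $1+k$ where $A/W$ is a product of $k$ atoms; the constraint $1+k \in \{2, D\}$ allows only $k = 1$ (forcing $|A/W| \le D$, hence $|W| \ge D$) or $k = D-1$ (forcing $|A/W| \ge 2(D-1)$, hence $|W| \le 2$). In particular $A$ has no atom of length $3$.

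I then split on the size of $\supp(A) = \supp(B) \cup (-\supp(B))$. If $|\supp(A)| \le 2$, then $\supp(A) = \{g,-g\}$ with $\ord(g) \ne 2$ (the alternative would make $A = g^{2D}$ with only the atom $g^2$, precluding any length-$D$ atom for $D \ge 4$); a length-$D$ atom supported on $\{g,-g\}$ must be $g^{\ord(g)}$ or $(-g)^{\ord(g)}$, so $\ord(g) = D$. Since $\mathsf{D}(G) \ge \exp(G) \ge \ord(g) = D$, one has $\exp(G) = D$, so $\langle g \rangle$ is a direct summand of $G$; the super-additivity $\mathsf{D}(G_1 \oplus G_2) \ge \mathsf{D}(G_1) + \mathsf{D}(G_2) - 1$ then forces the complementary summand to be trivial, and $G = \langle g \rangle$ is cyclic.

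If $|\supp(A)| \ge 3$, I aim to show that every element of $\supp(A)$ has order $2$, which together with the length-$D$ atom forces $G = G[2]$. The main obstacle is exactly this step: assuming toward contradiction some $g \in \supp(A)$ has $\ord(g) \ge 3$ and picking $h \in \supp(A) \setminus \{g, -g\}$, I would produce a length-$3$ atom of $A$ among the candidates $g \cdot h \cdot (-g-h)$ (when $-g-h \in \supp(A)$ with the three terms pairwise distinct), $g \cdot h^2$ (when $g = -2h$ and $\mathsf{v}_h(A) \ge 2$), $g^2 \cdot (-2g)$ (when $-2g \in \supp(A)$ and $\mathsf{v}_g(A) \ge 2$), or $g^3$ (when $\ord(g) = 3$ and $\mathsf{v}_g(A) \ge 3$); the multiplicity identity $\mathsf{v}_g(A) = \mathsf{v}_g(B) + \mathsf{v}_{-g}(B)$ together with the symmetry $\supp(A) = -\supp(A)$ must be used to verify that at least one of these candidate atoms is realized in $A$, contradicting the structural lemma. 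Once order-$2$-ness is established, $\supp(A) \subseteq G[2]$ and the length-$D$ atom gives $\mathsf{D}(G[2]) \ge D = \mathsf{D}(G)$; a short check based on $\mathsf{D}(G) \ge \mathsf{D}^*(G)$ and the canonical decomposition shows $\mathsf{D}(G) > \mathsf{D}(G[2])$ whenever $G \ne G[2]$, forcing $G = G[2] \cong C_2^{D-1}$.
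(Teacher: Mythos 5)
The paper does not actually prove Proposition \ref{6.5}; it defers to \cite[Theorem 6.6.3]{Ge-HK06a}, so there is no in-paper argument to compare against, and your proposal has to stand on its own. Your forward direction is correct, and the first half of your reverse direction is sound: the forced equalities $|A|=2\mathsf D(G)$, $|U_1|=|U_2|=\mathsf D(G)$, $|V_i|=2$, and the lemma that every atom dividing $A$ has length $2$ or $\mathsf D(G)$ are all right. (In the case $|\supp(A)|\le 2$ you should also dispose of a support consisting of two distinct involutions, but that is immediate since no length-$\mathsf D(G)$ atom lives on such a support.)

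The genuine gap is exactly where you flag it, and it cannot be repaired by enlarging your list of candidates: it is simply false that a symmetric support of size at least $3$ containing an element of order at least $3$ must carry an atom of length $3$. Take $\supp(A)=\{e_1,-e_1,e_2,-e_2\}$ with $e_1,e_2$ independent of order $4$ (say in $C_4\oplus C_4$): every zero-sum sequence over this set has its $e_1$-exponent minus its $(-e_1)$-exponent divisible by $4$, and likewise for $e_2$, so the atoms supported there are exactly $e_i(-e_i)$ and $(\pm e_i)^4$, and there is no atom of length $3$ at all; each of your candidates $gh(-g-h)$, $gh^2$, $g^2(-2g)$, $g^3$ fails. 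No counterexample to the proposition arises from this configuration only because no atom of length $\mathsf D(C_4\oplus C_4)=7$ is supported on it either, so no such $A$ exists --- but detecting that requires using the length-$\mathsf D(G)$ atoms $U_1,U_2$, which your argument in the case $|\supp(A)|\ge 3$ never does. The standard route extracts more from the two factorizations first: since $U_1$ is an atom and each $V_i$ has length $2$, no $V_i$ can divide $U_1$, so each $V_i$ contributes exactly one letter to $U_1$ and one to $U_2$; hence $U_2=-U_1$ and $A=U(-U)$ for an atom $U$ of maximal length $\mathsf D(G)$. This yields $-g\notin\supp(U)$ for every $g\in\supp(U)$ of order greater than $2$ and reduces the problem to a (still nontrivial) analysis of atoms of maximal length, in which the excluded intermediate lengths $3$ and $\mathsf D(G)-1$ are both exploited. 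Without a step of this kind your final case does not close.
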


The next theorem gathers what is known on the Characterization Problem.

\begin{theorem} \label{6.6}
Let $G$ be a finite abelian group with $\mathsf D (G) \ge 4$, and let $G'$ be an abelian group with $\mathcal L (G) = \mathcal L (G')$. Then $G$ and $G'$ are isomorphic in each of the following cases.
\begin{enumerate}
\item $G \cong C_{n_1} \oplus C_{n_2}$ where $n_1, n_2 \in \N$ with $n_1 \t n_2$ and $n_1+n_2>4$.

\item $G$ is an elementary $2$-group.

\item $G \cong C_n^r$ where   $r, n \in \N$ with $n \ge 2$ and $2r < n-2$.

\item $\mathsf D (G) \le 11$.
\end{enumerate}
\end{theorem}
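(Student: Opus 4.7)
My first step is to show that $G'$ must be finite with $\mathsf D(G') = \mathsf D(G)$. If $G'$ were infinite, Theorem \ref{5.5} would yield $\mathcal L(G') = \{L \subset \N_{\ge 2} \mid L \text{ is finite and nonempty}\} \cup \{\{0\},\{1\}\}$, contradicting the finiteness of $\Delta(G)$ and of $\rho(G)$ from Proposition \ref{6.1}. Once $G'$ is finite, Proposition \ref{6.1}.1 gives $\mathsf D(G') = \rho_2(G') = \rho_2(G) = \mathsf D(G)$, and then Lemma \ref{6.4} leaves only finitely many candidates for $G'$ up to isomorphism. The task in each case becomes to extract enough further invariants from $\mathcal L(G)$ to single out $G$ within this finite list.

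\textbf{Cases (1) and (2).} The key tool here is Proposition \ref{6.5}: the set $\{2,\mathsf D(G)\}$ lies in $\mathcal L(G) = \mathcal L(G')$ if and only if $G'$ is cyclic or an elementary $2$-group. Hence in the subcases where $G$ itself is cyclic (the $n_1 = 1$ part of (1)) or elementary $2$-group (case (2)), $G'$ is restricted to cyclic or elementary $2$-group of the same Davenport constant, and these two possibilities are then separated by a finer invariant --- for example $\max \Delta(C_n) = n-2$ for $n \ge 4$, whereas $\max \Delta(C_2^r)$ is strictly smaller, forcing the $\mathcal L$-systems apart. If instead $G \cong C_{n_1} \oplus C_{n_2}$ with $n_1 \ge 2$, Proposition \ref{6.5} forces $G'$ to be neither cyclic nor an elementary $2$-group, and one recovers the pair $(n_1, n_2)$ from $\mathsf D(G') = n_1 + n_2 - 1$ together with rank-two constraints on $\Delta(G')$ and on the initial unions $\mathcal U_k(G')$.

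\textbf{Cases (3) and (4).} For case (4), I would reduce to a finite verification: the bound $\mathsf D^*(G') \le \mathsf D(G') = \mathsf D(G) \le 11$ leaves only finitely many $G'$, and for each non-isomorphic pair $(G,G')$ with matching Davenport constant one exhibits a set of lengths present in one system but not the other, using the tabulated invariants $\rho_k$, $\Delta$, and small $\mathcal U_k$ --- a task well-suited to computer calculation. For case (3), $G \cong C_n^r$ with $2r < n-2$, the gap hypothesis is precisely what lets one read both parameters off $\mathcal L(G)$ separately: invariants sensitive to the exponent (for instance $\max \Delta(G)$ and the shape of $\mathcal U_k(G)$ for $k$ comparable to $n$) recover $n$, after which $\mathsf D(G) = r(n-1) + 1$ yields $r$, and $G$ is determined.

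\textbf{Main obstacle.} The substantive difficulty is the identification of distinguishing invariants in each case. Proposition \ref{6.5} is the only general separation tool available from the excerpt, and it only separates groups on opposite sides of the dichotomy ``$\{2,\mathsf D(G)\} \in \mathcal L(G)$''. Telling apart two groups of equal Davenport constant that both lie on the same side of this dichotomy --- such as a cyclic group from an elementary $2$-group of matching $\mathsf D$, or two rank-two groups with the same $n_1 + n_2$ --- demands invariants of $\mathcal L(G)$ finer than $\mathsf D$ and the first few $\rho_k$ and $\Delta$-data. The case-by-case construction of such invariants, together with additive-combinatorial inputs bounding $\mathsf D^*(G) < \mathsf D(G)$ beyond rank two, is the real content of the theorem, and explains why the statement is restricted to the four listed cases rather than proved in full generality.
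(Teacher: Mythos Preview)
Your overall strategy matches the paper's: reduce to finite $G'$ with $\mathsf D(G') = \mathsf D(G)$, then invoke Proposition~\ref{6.5} to confine $G'$ to the cyclic/elementary $2$-group dichotomy in the relevant cases. However, your proposed separator between $C_n$ and $C_2^{\,r}$ (with $n = r+1 = \mathsf D(G)$) is wrong. You claim $\max \Delta(C_2^{\,r})$ is strictly smaller than $\max \Delta(C_n) = n-2$, but Proposition~\ref{6.5} itself gives $\{2,\mathsf D(G)\}\in \mathcal L(C_2^{\,r})$, whence $\mathsf D(G)-2 = r-1 \in \Delta(C_2^{\,r})$; combined with the upper bound in Proposition~\ref{6.1}.2 this forces $\max \Delta(C_2^{\,r}) = r-1 = n-2$ as well. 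So $\max \Delta$ does not distinguish the two families, and your argument for the cyclic subcase of~(1) and for~(2) has a gap at exactly the crucial step.

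The paper's actual distinguishing invariant is $\rho_{2k+1}(G)$: elementary $2$-groups attain the right-hand bound $k\mathsf D(G) + \lfloor \mathsf D(G)/2\rfloor$ of Proposition~\ref{6.1}.1, whereas cyclic groups realize only the left-hand bound $1 + k\mathsf D(G)$. Proving the cyclic upper bound is not elementary --- it rests on the Savchev--Chen structure theorem for long zero-sum free sequences over cyclic groups, which is the deep combinatorial input absent from your sketch. Likewise, for the genuine rank-two part of case~(1) and for case~(3), the paper does not merely read off $\Delta(G)$ and the $\mathcal U_k(G)$; it uses the set of \emph{minimal} distances $\Delta^*(G)$, an inverse result attached to the Structure Theorem~\ref{5.3}, and the full classification of minimal zero-sum sequences of length $\mathsf D(G)$ over rank-two groups. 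Recovering $(n_1,n_2)$ from $\mathsf D(G')$ plus ``rank-two constraints on $\Delta(G')$'' is too optimistic without these tools, and your outline should acknowledge that this is where the real work lies.
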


\begin{proof}[Proof of a special case]
We give a sketch of the proof for cyclic groups and for elementary $2$-groups. Let $G$ be either cyclic or an elementary $2$-group with $\mathsf D (G) \ge 4$, and let $G'$ be any abelian group with $\mathcal L (G) = \mathcal L (G')$. Since $\mathsf D (G)=\rho_2(G)=\rho_2(G')=\mathsf D (G')$ by Proposition \ref{6.1} and since $\{2, \mathsf D (G)\} \in \mathcal L (G)$ by Proposition \ref{6.5}, it follows that $\{2, \mathsf D (G')\} \in \mathcal L (G')$. Again Proposition \ref{6.5} implies that $G'$ is either cyclic  or  an elementary $2$-group. There are two proofs showing that the system of sets of lengths of cyclic groups and that of elementary $2$-groups (with the same Davenport constant) are distinct (\cite[Corollary 5.3.3, page 77]{Ge-Ru09} or \cite[Theorem 7.3.3]{Ge-HK06a}), and neither of them is elementary. Both proofs use the Savchev-Chen Structure Theorem for long zero-sum free sequences over cyclic groups (\cite[Theorem 5.1.8, page 61]{Ge-Ru09}, \cite[Chapter 11]{Gr13a}) or related statements. To discuss one approach, let $k \in \N$ and consider the inequality for $\rho_{2k+1} (G)$ given in Proposition \ref{6.1}. Elementary examples show  in case of elementary $2$-groups that we have equality on the right  side, whereas for cyclic groups we have equality on the left side (\cite[Theorem 5.3.1, page 75]{Ge-Ru09}. Detailed proofs can be found in \cite[Corollary 5.3.3, page 77]{Ge-Ru09} and \cite[Theorem 7.3.3]{Ge-HK06a}.

Now suppose that $G$ has rank two, say $G \cong C_{n_1} \oplus C_{n_2}$ where $n_1, n_2 \in \N$ with $1 < n_1 \t n_2$ and $n_1+n_2>4$, and let $G'$ be any  abelian group such that $\mathcal L (G) = \mathcal L (G')$. The proof that $G$ and $G'$ are isomorphic has two main ingredients. First, it is based on the characterization of all minimal zero-sum sequences over $G$ of length $\mathsf D (G)$. This has been done in a series of papers by Gao, Geroldinger, Grynkiewicz, Reiher, and Schmid (see \cite{B-G-G-P13a} for the characterization and detailed references). Second, it is based on the Structure Theorem for Sets of Lengths (Theorem \ref{5.3}), on an associated inverse result (\cite[Proposition 9.4.9]{Ge-HK06a}), and on a detailed study of the set of minimal distances $\Delta^* (G)$ (\cite{Ge-Zh16a}), which is defined as
\[
\Delta^* (G) = \{ \min \Delta (G_0) \mid G_0 \subset G \ \text{with} \ \Delta (G_0) \ne \emptyset \} \subset \Delta (G) \,.
\]
Detailed proofs of 1., 3., and 4. can be found in \cite{Ge-Sc16a, Ge-Zh16b, Zh17a}.
\end{proof}

We end this survey with the conjecture stating that the Characterization Problem has a positive answer for all finite abelian groups $G$ having Davenport constant $\mathsf D (G) \ge 4$.

\begin{conjecture} \label{6.7}
Let $G$ be a finite abelian group with $\mathsf D (G) \ge 4$. If $G'$ is an abelian group with $\mathcal L (G) = \mathcal L (G')$, then $G$ and $G'$ are isomorphic.
\end{conjecture}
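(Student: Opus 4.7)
The plan is to treat this as a structured reverse-engineering problem: given only the combinatorial data $\mathcal L(G)$, recover the group invariants $(n_1,\ldots,n_r)$ of $G$ step by step. First I would extract all invariants that are manifestly determined by $\mathcal L(G)$. From Proposition \ref{6.1} one reads off $\rho_2(G)=\mathsf D(G)$, the set of distances $\Delta(G)=\bigcup_{L\in\mathcal L(G)}\Delta(L)$, and all unions $\mathcal U_k(G)=\bigcup\{L\in\mathcal L(G)\mid k\in L\}$, so an equality $\mathcal L(G)=\mathcal L(G')$ forces $\mathsf D(G)=\mathsf D(G')$ and $\Delta(G)=\Delta(G')$ and agreement of all $\mathcal U_k$. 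By Lemma \ref{6.4} the candidates for $G'$ are already reduced to a finite list.

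Next I would use Proposition \ref{6.5} to decide whether $G'$ is cyclic or an elementary $2$-group exactly when $G$ is, disposing of these two extremal cases via Theorem \ref{6.6}.2 together with the rank-one part of Theorem \ref{6.6}.1. What remains is the hard case where $G$ has at least two nontrivial cyclic factors and is not an elementary $2$-group. Here my plan is to leverage the Structure Theorem for Sets of Lengths (Theorem \ref{5.3}) together with inverse additive-combinatorial results: if $\mathcal L(G)=\mathcal L(G')$ then the two monoids $\mathcal B(G)$ and $\mathcal B(G')$ must share the same finite list of AAMP parameters (difference set, period set, global bound), and I would argue that this forces $G\cong G'$ by inducting on $\mathsf D(G)$. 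The base cases $\mathsf D(G)\le 11$ are already in Theorem \ref{6.6}.4, and the rank-two and low-rank/high-exponent cases are in Theorem \ref{6.6}.1,3, so the inductive load is on high-rank groups of moderate exponent.

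The substantive step in the induction is to read off from $\mathcal L(G)$ a family of refined invariants that do detect the group, in particular the set of minimal distances $\Delta^*(G)=\{\min\Delta(G_0)\mid G_0\subset G,\Delta(G_0)\ne\emptyset\}$, the precise values of the odd elasticities $\rho_{2k+1}(G)$ (which distinguish cyclic from elementary $2$-type behavior via the inequality in Proposition \ref{6.1}.1), and the collection of \emph{exceptional} sets of lengths, i.e.\ those $L\in\mathcal L(G)$ which fail to be intervals. For each such refined invariant I would apply the known inverse zero-sum theorems describing minimal zero-sum sequences of length $\mathsf D(G)$ (in the spirit of the rank-two characterization used in Theorem \ref{6.6}.1) to recover the invariant factors $n_1\mid\cdots\mid n_r$ uniquely.

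The main obstacle, and the reason the conjecture is still open, is Theorem \ref{6.3}: a density-one proportion of sets of lengths are plain intervals and therefore completely transparent to the group beyond the value of $\mathsf D(G)$. All the group-theoretic information that separates, say, $C_n\oplus C_n\oplus C_n$ from a competing group of the same Davenport constant is concentrated in an asymptotically negligible family of non-interval sets of lengths, and current inverse zero-sum theory does not yet control this sparse family tightly enough in arbitrary rank. A second, intertwined obstacle is that for rank $r\ge 3$ even the value of $\mathsf D(G)$ in terms of $(n_1,\ldots,n_r)$ is not known, and the structural description of atoms of length $\mathsf D(G)$ — which is the key input used in the proofs of Theorem \ref{6.6}.1,3 — is available only in special cases. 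I therefore expect that a full proof will require both (i) an extension of the Savchev–Chen type structure theorems to long minimal zero-sum sequences over arbitrary finite abelian groups, and (ii) a sharpening of the Structure Theorem for Sets of Lengths that pins down which AAMPs actually occur for which $G$.
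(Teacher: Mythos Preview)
The statement you are addressing is a \emph{conjecture}, not a theorem; the paper does not prove it and explicitly presents it as the long-term open goal of the subject. There is therefore no ``paper's own proof'' to compare your proposal against.

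Your write-up is not a proof either, and to your credit you acknowledge this: you correctly extract the invariants that $\mathcal L(G)$ determines (Proposition~\ref{6.1}, Lemma~\ref{6.4}), you invoke the known partial results (Theorem~\ref{6.6}), and you name the two genuine obstructions---Theorem~\ref{6.3} on the prevalence of interval sets of lengths, and the lack of a structure theory for long minimal zero-sum sequences in rank $r\ge 3$ (including the open status of $\mathsf D(G)=\mathsf D^*(G)$). This is an accurate survey of where the problem stands, and it matches the paper's own assessment of the difficulties.

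The one place your proposal overreaches is the suggested induction on $\mathsf D(G)$. There is no mechanism linking $\mathcal L(G)$ to $\mathcal L(G'')$ for groups $G''$ of smaller Davenport constant, so the inductive hypothesis has nothing to bite on; the cases in Theorem~\ref{6.6} are handled by direct structural arguments, not by reduction to smaller groups. Your items (i) and (ii) at the end are the real missing ingredients, and until those are available the conjecture remains open.
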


\providecommand{\bysame}{\leavevmode\hbox to3em{\hrulefill}\thinspace}
\providecommand{\MR}{\relax\ifhmode\unskip\space\fi MR }
% \MRhref is called by the amsart/book/proc definition of \MR.
\providecommand{\MRhref}[2]{%
  \href{http://www.ams.org/mathscinet-getitem?mr=#1}{#2}
}
\providecommand{\href}[2]{#2}


\begin{thebibliography}{10}

\bibitem{An97}
D.D. Anderson (ed.), \emph{Factorization in {I}ntegral {D}omains}, Lect. Notes
  Pure Appl. Math., vol. 189,  Dekker, New York, 1997.

\bibitem{Ba-Ba-Go14}
D.~Bachman, N.~Baeth, and J.~Gossell, {Factorizations of upper triangular
  matrices}, \emph{Linear Algebra Appl.} \textbf{450} (2014), 138 -- 157.

\bibitem{Ba-Ge-Gr-Sm15}
N.R. Baeth, A.~Geroldinger, D.J. Grynkiewicz, and D.~Smertnig, {A
  semigroup-theoretical view of direct-sum decompositions and associated
  combinatorial problems}, \emph{J. Algebra Appl.} \textbf{14} (2015), 1550016 (60
  pages).

\bibitem{Ba-Sm15}
N.R. Baeth and D.~Smertnig, {Factorization theory: {F}rom commutative to
  noncommutative settings}, \emph{J. Algebra} \textbf{441} (2015), 475 –-- 551.

\bibitem{Ba-Wi13a}
N.R. Baeth and R.~Wiegand, {Factorization theory and decomposition of
  modules}, \emph{Amer. Math. Monthly} \textbf{120} (2013), 3 -- 34.

\bibitem{Ba-Ch11a}
P.~Baginski and S.T. Chapman, {Factorizations of algebraic integers, block
  monoids, and additive number theory}, \emph{Amer. Math. Monthly} \textbf{118}
  (2011), 901 -- 920.

\bibitem{B-G-G-P13a}
P.~Baginski, A.~Geroldinger, D.J. Grynkiewicz, and A.~Philipp, {Products
  of two atoms in {K}rull monoids and arithmetical characterizations of class
  groups}, \emph{Eur. J. Comb.} \textbf{34} (2013), 1244 -- 1268.

\bibitem{Ca-Ch97}
P.-J. Cahen and J.-L. Chabert, \emph{Integer-{V}alued {P}olynomials}, vol.~48,
  Amer. {M}ath. {S}oc. {S}urveys and {M}onographs, 1997.

\bibitem{Ca-Ch16a}
\bysame, {What you should know about integer-valued polynomials}, \emph{Amer.
  Math. Monthly} \textbf{123} (2016), 311 -- 337.

\bibitem{Ch14a}
S.T. Chapman, {A tale of two monoids: a friendly introduction to nonunique
  factorizations}, \emph{Math. Magazine} \textbf{87} (2014), 163 -- 173.

\bibitem{C-F-G-O16}
S.T. Chapman, M.~Fontana, A.~Geroldinger, and B.~Olberding (eds.),
  \emph{Multiplicative {I}deal {T}heory and {F}actorization {T}heory},
  Proceedings in Mathematics and Statistics, vol. 170, Springer, 2016.

\bibitem{Ch-Sm90a}
S.T. Chapman and W.W. Smith, {Factorization in {D}edekind domains with
  finite class group}, \emph{Isr. J. Math.} \textbf{71} (1990), 65 -- 95.

\bibitem{Co-Sm11a}
J.~Coykendall and W.W. Smith, {On unique factorization domains}, \emph{J.
  Algebra} \textbf{332} (2011), 62 -- 70.

\bibitem{numericalsgps}
M.~Delgado, P.A. Garc{\'i}a-S{\'a}nchez, and J.~Morais,
  \emph{``numericalsgps'': a {\sf {g}{a}{p}} package on numerical semigroups},
  \verb+(http://www.gap-system.org/Packages/numericalsgps.html)+.

\bibitem{Fa02}
A.~Facchini, {Direct sum decomposition of modules, semilocal endomorphism
  rings, and {K}rull monoids}, \emph{J. Algebra} \textbf{256} (2002), 280 -- 307.

\bibitem{Fo-Ho-Lu13a}
M.~Fontana, E.~Houston, and T.~Lucas, \emph{Factoring {I}deals in {I}ntegral
  {D}omains}, Lecture Notes of the Unione Matematica Italiana, vol.~14,
  Springer, 2013.

\bibitem{Fr13a}
S.~Frisch, {A construction of integer-valued polynomials with prescribed
  sets of lengths of factorizations}, \emph{Monatsh. Math.} \textbf{171} (2013), 341
  -- 350.

\bibitem{Fr16a}
\bysame, {Relative polynomial closure and monadically {K}rull monoids of
  integer-valued polynomials}, in \emph{Multiplicative {I}deal {T}heory and
  {F}actorization {T}heory}. Ed. S.~T. Chapman, M.~Fontana, A.~Geroldinger, and
  B.~Olberding. Springer, 2016, pp.~145 -- 157.

\bibitem{Ga-Ge09b}
W.~Gao and A.~Geroldinger, {On products of $k$ atoms}, \emph{Monatsh. Math.}
  \textbf{156} (2009), 141 -- 157.

\bibitem{GS16a}
P.A. Garc{\'i}a-S{\'a}nchez, {An overview of the computational aspects of
  nonunique factorization invariants}, in \emph{Multiplicative {I}deal {T}heory and
  {F}actorization {T}heory}. Ed. S.T. Chapman, M.~Fontana, A.~Geroldinger, and
  B.~Olberding. Springer, 2016, pp.~159 -- 181.

\bibitem{Ge88}
A.~Geroldinger, {{\"{U}}ber nicht-eindeutige {Z}erlegungen in irreduzible
  {E}lemente}, \emph{Math. Z.} \textbf{197} (1988), 505 -- 529.

\bibitem{Ge13a}
\bysame, {Non-commutative {K}rull monoids: a divisor theoretic approach
  and their arithmetic}, \emph{Osaka J. Math.} \textbf{50} (2013), 503 -- 539.

\bibitem{Ge-HK06a}
A.~Geroldinger and F.~Halter-Koch, \emph{Non-{U}nique {F}actorizations.
  {A}lgebraic, {C}ombinatorial and {A}nalytic {T}heory},  Chapman \& Hall/CRC, Boca Raton, FL,  2006.

\bibitem{Ge-Ka10a}
A.~Geroldinger and F.~Kainrath, {On the arithmetic of tame monoids with
  applications to {K}rull monoids and {M}ori domains}, \emph{J. Pure Appl. Algebra}
  \textbf{214} (2010), 2199 -- 2218.

\bibitem{Ge-Ka-Re15a}
A.~Geroldinger, F.~Kainrath, and A.~Reinhart, {Arithmetic of seminormal
  weakly {K}rull monoids and domains}, \emph{J. Algebra} \textbf{444} (2015), 201 --
  245.

\bibitem{Ge-Le90}
A.~Geroldinger and G.~Lettl, {Factorization problems in semigroups},
  \emph{Semigroup Forum} \textbf{40} (1990), 23 -- 38.

\bibitem{Ge-Ra-Re15c}
A.~Geroldinger, S.~Ramacher, and A.~Reinhart, {On $v$-{M}arot {M}ori rings
  and $\rm{C}$-rings}, \emph{J. Korean Math. Soc.} \textbf{52} (2015), 1 -- 21.

\bibitem{Ge-Ru09}
A.~Geroldinger and I.~Ruzsa, \emph{Combinatorial {N}umber {T}heory and
  {A}dditive {G}roup {T}heory}, Advanced Courses in Mathematics - CRM
  Barcelona, Birkh{\"a}user, 2009.

\bibitem{Ge-Sc16a}
A.~Geroldinger and W.~A. Schmid, {A characterization of class groups via
  sets of lengths}, {http://arxiv.org/abs/1503.04679}.

\bibitem{Ge-Sc-Zh17b}
A.~Geroldinger, W.~A. Schmid, and Q.~Zhong, {Systems of sets of lengths:
  transfer {K}rull monoids versus weakly {K}rull monoids},
  {http://arxiv.org/abs/1606.05063}.

\bibitem{Ge-Yu12b}
A.~Geroldinger and P.~Yuan, {The set of distances in {K}rull monoids},
  \emph{Bull. Lond. Math. Soc.} \textbf{44} (2012), 1203 –-- 1208.

\bibitem{Ge-Zh16b}
A.~Geroldinger and Q.~Zhong, {A characterization of class groups via sets
  of lengths {II}}, \emph{J. Th{\'e}or. Nombres Bordx.} (forthcoming).

\bibitem{Ge-Zh16a}
\bysame, {The set of minimal distances in {K}rull monoids}, \emph{Acta Arith.}
  \textbf{173} (2016), 97 -- 120.

\bibitem{Gr13a}
D.J. Grynkiewicz, \emph{Structural {A}dditive {T}heory}, Developments in
  Mathematics, Springer, 2013.

\bibitem{HK97a}
F.~Halter-Koch, {Finitely generated monoids, finitely primary monoids and
  factorization properties of integral domains}, in \emph{Factorization in {I}ntegral
  {D}omains}. Ed. D.D. Anderson.  Lect. Notes Pure Appl. Math., vol. 189,  Dekker, New York, 1997,
  pp.~73 -- 112.

\bibitem{HK98}
\bysame, \emph{Ideal {S}ystems. {A}n {I}ntroduction to {M}ultiplicative {I}deal
  {T}heory}, Marcel Dekker, 1998.

\bibitem{H-L-S-S15}
G.~Heged{\"u}s, Z.~Li, J.~Schicho, and H.P. Schr{\"o}cker, {From the
  fundamental theorem of algebra to {K}empe's universality theorem}, \emph{Internat.
  Math. Nachrichten} \textbf{229} (2015), 13--26.

\bibitem{Ja65}
B.~Jacobson, {Matrix number theory{\rm \,:} an example of non-unique
  factorization}, \emph{Amer. Math. Monthly} \textbf{72} (1965), 399 -- 402.

\bibitem{Ka16a}
J.~Kaczorowski, {Analytic monoids and factorization problems}, \emph{Semigroup
  Forum} (forthcoming).

\bibitem{Ka99a}
F.~Kainrath, {Factorization in {K}rull monoids with infinite class group},
  \emph{Colloq. Math.} \textbf{80} (1999), 23 -- 30.

\bibitem{Ma-Ok16a}
P.~Malcolmson and F.~Okoh, {Half-factorial subrings of factorial domains},
  \emph{J. Pure Appl. Algebra} \textbf{220} (2016), 877 –-- 891.

\bibitem{Na04}
W.~Narkiewicz, \emph{Elementary and {A}nalytic {T}heory of {A}lgebraic
  {N}umbers, 3rd ed.}, Springer, 2004.

\bibitem{Re16a}
A.~Reinhart, {On the divisor-class group of monadic submonoids of rings of
  integer-valued polynomials}, \emph{Commun. Korean Math. Soc.} (forthcoming).

\bibitem{Re14a}
\bysame, {On monoids and domains whose monadic submonoids are {K}rull}, in \emph{
  Commutative {A}lgebra. {R}ecent {A}dvances in {C}ommutative {R}ings,
  {I}nteger-{V}alued {P}olynomials, and {P}olynomial {F}unctions}. Ed.  M.~Fontana,
  S.~Frisch, and S.~Glaz. Springer, 2014, pp.~307 -- 330.

\bibitem{Ro17a}
M.~Roitman, {Half-factorial domains}, manuscript.

\bibitem{Sc09a}
W.A. Schmid, {A realization theorem for sets of lengths}, \emph{J. Number Theory}
  \textbf{129} (2009), 990 -- 999.

\bibitem{Sc16a}
\bysame, {Some recent results and open problems on sets of lengths of
  {K}rull monoids with finite class group}, in \emph{Multiplicative {I}deal {T}heory and
  {F}actorization {T}heory}. Ed. S.T. Chapman, M.~Fontana, A.~Geroldinger, and
  B.~Olberding. Springer, 2016, pp.~323 -- 352.

\bibitem{Sm16b}
D.~Smertnig, {{Factorizations in bounded hereditary noetherian prime
  rings}}, {http://arxiv.org/abs/1605.09274}.

\bibitem{Sm13a}
\bysame, {Sets of lengths in maximal orders in central simple algebras},
  \emph{J. Algebra} \textbf{390} (2013), 1 -- 43.

\bibitem{Sm16a}
\bysame, {Factorizations of elements in noncommutative rings: {A}
  {S}urvey}, in \emph{Multiplicative {I}deal {T}heory and {F}actorization {T}heory}. Ed.
  S.T. Chapman, M.~Fontana, A.~Geroldinger, and B.~Olberding. Springer,
  2016, pp.~353 -- 402.

\bibitem{Zh17a}
Q.~Zhong, {Sets of minimal distances and characterizations of class groups
  of {K}rull monoids}, {http://arxiv.org/abs/1606.08039}.

\end{thebibliography}
\end{document}